\numberwithin{equation}{section}
\numberwithin{figure}{section}
\newtheorem{lemma}{Lemma}[section]
\newtheorem{theorem}{Theorem}[section]
\newtheorem{proposition}{Proposition}[section]
\newtheorem{corollary}[lemma]{Corollary}
\theoremstyle{definition}
\newtheorem{remark}{Remark}[section]
\newcommand{\Z}{\mathbb{Z}}
\newcommand{\R}{\mathbb{R}}
\newcommand{\C}{\mathbb{C}}
\newcommand{\N}{\mathbb{N}}
\newcommand{\Nset}{\mathcal{N}}
\newcommand{\Mset}{\mathcal{M}}
\newcommand{\Pri}{\mathbb{P}}
\newcommand{\partition}{\mathfrak{p}}
\newcommand{\card}{\operatorname{card}}
\newcommand{\e}{\operatorname{e}}
\newcommand{\real}{\operatorname{Re}}
\newcommand{\imag}{\operatorname{Im}}
\newcommand{\sumtwo}{\operatorname*{\sum\sum}}
\newcommand{\modu}{\operatorname{mod}}
\newcommand{\mycomment}[1]{}
\begin{document}
\title{Partitions into semiprimes} 
%%%%%%%%%%%%%%%
\author[M.~Das]{Madhuparna Das}
\address{Department of Mathematics, University of Exeter, Exeter, EX4 4QF, United Kingdom}
\email{md679@exeter.ac.uk}
\author[N.~Robles]{Nicolas Robles}
\address{
	\textnormal{Previously at} Department of Mathematics, University of Illinois, 1409 West Green Street, Urbana, IL 61801, USA; \textnormal{current address} 
	IBM Quantum, IBM T. J. Watson Research Center, Yorktown Heights, New York 10598, USA}
\email{nicolas.robles@ibm.com}
\author[A.~Zaharescu]{Alexandru Zaharescu}
\address{Department of Mathematics, University of Illinois, 1409 West Green
Street, Urbana, IL 61801, USA; \textnormal{and} Institute of Mathematics of the Romanian academy, P.O. BOX 1-764, Bucharest, Ro-70700, Romania}
\email{zaharesc@illinois.edu}
\author[D.~Zeindler]{Dirk Zeindler}
\address{Department of Mathematics and Statistics, Lancaster University, Fylde College, Bailrigg, Lancaster LA1 4YF, United Kingdom}
\email{d.zeindler@lancaster.ac.uk}
%%%%%%%%%%%%%%%
\subjclass[2020]{Primary: 11P55; 11L03, 11P82, 11L20. Secondary: 11M41. \\ \indent \textit{Keywords and phrases}: partitions, semiprimes, Hardy-Littlewood circle method, Weyl sums, Vinogradov's bound, prime zeta function, logarithmic singularities, Meissel-Mertens constant.}
\maketitle
%%%%%%%%%%%%%%%%%%%%%%%%%%%%%%%%%%%%%%%%%%%%%%%%%%%%%%%%%%%%%%%%%%%%%%%%%%%%%%%%%%%%%%%%%%%%%%%%%
\begin{abstract}
Let $\mathbb{P}$ denote the set of primes and $\Nset \subset \N$ be a set with arbitrary weights attached to its elements. Set $\partition_{\Nset}(n)$ to be the restricted partition function which counts partitions of $n$ with all its parts lying in $\Nset$. By employing a suitable variation of the Hardy-Littlewood circle method we provide the asymptotic formula of $\partition_{\Nset}(n)$ for the set of semiprimes $\Nset = \{p_1 p_2 : p_1, p_2 \in \mathbb{P}\}$ in different set-ups (counting factors, repeating the count of factors, and different factors). In order to deal with the minor arc, we investigate a double Weyl sum over prime products and find its corresponding bound thereby extending some of the results of Vinogradov on partitions. We also describe a methodology to find the asymptotic partition $\partition_{\Nset}(n)$ for general weighted sets $\Nset$ by assigning different strategies for the major, non-principal major, and minor arcs. Our result is contextualized alongside other recent results in partition asymptotics. 
\end{abstract}
%%%%%%%%%%%%%%%%%%%%%%%%%%%%%%%%%%%%%%%%%%%%%%%%%%%%%%%%%%%%%%%%%%%%%%%%%%%%%%%%%%%%%%%%%%%%%%%%%
%\tableofcontents
%%%%%%%%%%%%%%%%%%%%%%%%%%%%%%%%%%%%%%%%%%%%%%%%%%%%%%%%%%%%%%%%%%%%%%
\section{Introduction and results} \label{sec:introduction}
%%%%%%%%%%%%%%%%%%%%%%%%%%%%%%%%%%%%%%%%%%%%%%%%%%%%%%%%%%%%%%%%%%%%%%
A partition of a positive integer $n$ is a non-decreasing sequence of positive integers whose sum is $n$. Suppose that $\mathcal{N} \subseteq \mathbb{N}$ and let $\partition_\Nset(n)$ denote the restricted partition function which counts partitions of $n$ lying within $\Nset$. When $\Nset = \N$, we obtain the unrestricted partition function $\partition_{\mathbb{N}}(n)$ studied by Hardy and Ramanujan \cite{hardyramanujuan} in 1918 through the use of the Hardy-Littlewood method. Their result states that
\begin{align*}
\partition_{\mathbb{N}}(n) \sim \frac{1}{4n\sqrt{3}} \exp \bigg(\pi \sqrt{\frac{2n}{3}}\bigg) \quad \textnormal{as} \quad n \to \infty.
\end{align*}
Partitions into prime numbers have been studied by various authors such as Bateman and Erd\H{o}s \cite{batemanerdos1, batemanerdos2}, Browkin \cite{browkin}, Kerawala \cite{kerawala}, Roth and Szekeres \cite{rothszekeres}, as well as Yang \cite{yang}. In 2008 Vaughan \cite{vaughan} was able to simplify and improve most of the literature on prime partitions. Let $\mathbb{P} = \{p: p \textnormal{ is prime}\}$. We will now study the partitions into semiprimes. Our main result is as follows.
\begin{theorem} \label{maintheorempapernotrepeated}
Let $\mathbb{P}_2 = \{ p_1 p_2 : p_1,p_2 \in \mathbb{P}\}$ and semiprimes are counted only once, i.e. $p_1p_2$ and $p_2p_1$ are considered the same. The number $\partition_{\mathbb{P}_2}$ of such partitions of $n$ with all parts lying in $\mathbb{P}_2$ satisfies
\begin{align*}
\partition_{\mathbb{P}_2}(n)
&\sim \mathfrak{c}_1 n^{-\frac{3}{4}} (\log(n/2))^{-\frac{1}{4}} (\log \log n + \mathfrak{c}_2)^{\frac{1}{4}} \nonumber \\
&\quad \times \exp \bigg\{\mathfrak{c}_3 \bigg( \frac{n}{\log (n/2)}\bigg)^{\frac{1}{2}} (\log \log n + \mathfrak{c}_2)^{\frac{1}{2}} \bigg(1+ O\bigg(\frac{\log \log n}{\log n}\bigg)\bigg)\bigg\}, \nonumber
\end{align*}
as $n \to \infty$ and the constants are given by
\begin{align*}
\mathfrak{c}_1 = \frac{(4 \zeta(2))^{\frac{1}{4}}}{2^{\frac{1}{4}} \sqrt{\pi}}, \quad \mathfrak{c}_2 = M - \log 2, 
\quad \textnormal{and} \quad \mathfrak{c}_3 = (2^{\frac{1}{2}}+2^{-\frac{3}{2}})(4 \zeta(2))^{\frac{1}{2}}.
\end{align*}
Here $M$ is the Meissel-Mertens constant $M = \gamma + \sum_{p \in \mathbb{P}} (\log(1-\frac{1}{p})+\frac{1}{p}) \approx 0.26149721 \ldots$ where $\gamma$ is the Euler constant.
\end{theorem}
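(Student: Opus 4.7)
The plan is to apply the Hardy--Littlewood circle method to the generating function $F(z) = \prod_{m \in \mathbb{P}_2}(1-z^m)^{-1}$, starting from
\begin{equation*}
\partition_{\mathbb{P}_2}(n) = \frac{1}{2\pi i}\oint_{|z|=r}\frac{F(z)}{z^{n+1}}\,dz,
\end{equation*}
parametrised as $z = e^{-\alpha + 2\pi i t}$ with $\alpha>0$ to be chosen at the end as the saddle point. In the standard way the $t$-integral decomposes into a major arc around $t=0$, non-principal major arcs around rationals $a/q$ with small $q$, and a minor arc covering the rest.

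On the major arc I would evaluate $\log F(e^{-s})$ via Mellin inversion. Using $-\log(1-e^{-x}) = \tfrac{1}{2\pi i}\int_{(c)}\Gamma(w)\zeta(w+1)x^{-w}dw$ gives
\begin{equation*}
\log F(e^{-s}) = \frac{1}{2\pi i}\int_{(c)}\Gamma(w)\zeta(w+1)\Big(\tfrac{1}{2}P(w)^2+\tfrac{1}{2}P(2w)\Big)s^{-w}\,dw,
\end{equation*}
where $P(w)=\sum_p p^{-w}$ is the prime zeta function; the identity $\sum_{m\in\mathbb{P}_2}m^{-w} = \tfrac{1}{2}P(w)^2+\tfrac{1}{2}P(2w)$ encodes that $pq$ and $qp$ are identified while squares $p^2$ contribute once. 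The expansion $P(w)=-\log(w-1)+M+O(|w-1|)$ near $w=1$ shows that the integrand has a $\tfrac{1}{2}(\log(w-1))^2$ branch-point singularity together with a $-M\log(w-1)$ correction; deforming the contour to a Hankel-type loop around $w=1$ and using standard inverse-Mellin asymptotics for logarithmic singularities produces
\begin{equation*}
\log F(e^{-s}) \sim \frac{\zeta(2)\bigl(\log\log(1/s)+\mathfrak{c}_2\bigr)}{s\log(1/s)}\qquad(s\to 0^+),
\end{equation*}
together with the analogous asymptotics for its first two derivatives. The same answer, with the same constant $\mathfrak{c}_2 = M-\log 2$, can be recovered by partial summation from the refined Landau asymptotic $\pi_2(x)\sim\frac{x(\log\log x+M-\log 2)}{\log x}$, in which the $-\log 2$ arises from evaluating Mertens' sum at $\sqrt{x}$ in the hyperbola identity $\pi_2(x)=\sum_{p\le\sqrt{x}}\pi(x/p)+O(\sqrt{x})$. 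Feeding these asymptotics into the saddle equation $n=-\tfrac{d}{d\alpha}\log F(e^{-\alpha})$ gives
\begin{equation*}
\alpha\sim\sqrt{\frac{2\zeta(2)(\log\log n+\mathfrak{c}_2)}{n\log(n/2)}},
\end{equation*}
after which a Gaussian approximation around $t=0$ yields $\partition_{\mathbb{P}_2}(n)\sim F(e^{-\alpha})e^{\alpha n}/\sqrt{2\pi\Lambda''(\alpha)}$ with $\Lambda(\alpha)=\log F(e^{-\alpha})+\alpha n$; organising the constants then matches $\mathfrak{c}_1$ and $\mathfrak{c}_3$ and reproduces the stated exponent.

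The main obstacle, and what makes this problem genuinely harder than the prime-partition case handled by Vaughan, is the minor arc: one must show that for $t$ away from $0$ and the chosen small-denominator rationals, $\sum_{m\in\mathbb{P}_2}e^{(-\alpha+2\pi it)m}$ saves appreciably over its trivial bound. Since Vinogradov's classical Weyl bound for $\sum_p e^{2\pi itp}$ does not apply directly to a set of semiprimes, I would rewrite this as a double Weyl sum $\sum_{p_1}\sum_{p_2}e^{(-\alpha+2\pi it)p_1p_2}$ and perform a Vaughan-type type~I/type~II decomposition, fixing one prime variable and applying Vinogradov's exponential-sum estimate to the inner prime sum with effective frequency $tp_1$. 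A Diophantine analysis of $tp_1$ as $p_1$ ranges, interpolated with the trivial bound, should produce a saving strong enough to beat the (at most polynomial in $1/\alpha$) growth of $|F(e^{-\alpha})|$. Once this minor-arc bound and the analogous non-principal-major-arc estimates are in place, the saddle-point calculation of the previous paragraph delivers Theorem~\ref{maintheorempapernotrepeated}.
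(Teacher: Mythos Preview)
Your overall plan coincides with the paper's: the generating function is split into major, non-principal major, and minor arcs; the principal arc is evaluated by Mellin inversion against the semiprime Dirichlet series $\tfrac{1}{2}P(w)^2+\tfrac{1}{2}P(2w)$ (the paper packages this as $\lambda\Phi_{\mathbb{P}_2}+\mu\Phi_{\mathbb{P}^2}$ with $\lambda=\mu=\tfrac12$); the contour is deformed to a keyhole around the $(\log(w-1))^2$ branch point, producing exactly your displayed asymptotic for $\log F(e^{-s})$ with the constant $M-\log 2$; and the saddle-point computation is the same. Your alternative derivation via the refined Landau count $\pi_2(x)\sim x(\log\log x+M-\log 2)/\log x$ also appears in the paper as a second method.

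The one place where your sketch understates the difficulty is the minor arc. Writing the sum as $\sum_{p_1}\sum_{p_2}\e(t p_1 p_2)$ and ``fixing one prime, applying Vinogradov to the inner sum'' only works when the fixed prime is small, say $p_1\le M$: for $p_1$ near $\sqrt{X}$ the inner range is too short for Vinogradov to save anything, and your vague ``interpolated with the trivial bound'' does not cover this. The paper handles the balanced range $p_1,p_2>M$ separately via the bilinear exponential-sum bound of Iwaniec--Kowalski (their Lemma~13.8), and for the unbalanced range it needs a mild extension of Vinogradov's bound in which the Diophantine hypothesis is relaxed to $|\beta-a/q|\le \Upsilon/q^2$ (here $\Upsilon=p_1$ when $p_1\nmid q$), since the effective frequency $tp_1$ has a rational approximation whose quality degrades by a factor $p_1$. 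Both ingredients are essential for the final bound $S_2(\alpha,X)\ll Xq^{-1/6}(\log X)^{7/3}+\cdots$ that beats the major-arc main term. Similarly, the non-principal major arcs require a nontrivial Siegel--Walfisz computation for $\sum_{p_1p_2\equiv\ell\bmod q}1$, not merely an ``analogous'' estimate.
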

The technique we present is versatile enough to answer questions about other types of restricted sets of primes. 
\begin{theorem} \label{theoremO2}
Let $\mathbb{P}_2^{\neq} = \{ p_1 p_2 : p_1,p_2 \in \mathbb{P}, p_1\neq p_2\}$. The number $\partition_{\mathbb{P}_2^{\neq}}$ of such partitions of $n$ with all parts lying in $\mathbb{P}_2^{\ne}$ satisfies
\begin{align*}
\partition_{\mathbb{P}_2^{\neq}}(n) 
&\sim \mathfrak{c}_1 n^{-\frac{3}{4}} (\log(n/2))^{-\frac{1}{4}} (\log \log n + \mathfrak{c}_2)^{\frac{1}{4}} \nonumber \\
& \quad \times \exp \bigg\{\mathfrak{c}_3 \bigg( \frac{n}{\log(n/2)}\bigg)^{\frac{1}{2}} (\log \log n + \mathfrak{c}_2)^{\frac{1}{2}} \bigg(1+ O\bigg(\frac{\log \log n}{\log n}\bigg)\bigg)\bigg\}, \nonumber
\end{align*}
as $n \to \infty$ and where the constants are the same as in Theorem \textnormal{\ref{maintheorempapernotrepeated}}.
\end{theorem}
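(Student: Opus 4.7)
The asymptotic statement of Theorem \ref{theoremO2} agrees with that of Theorem \ref{maintheorempapernotrepeated} in every constant, which suggests a reduction strategy: show that removing the subset of prime squares makes a negligible perturbation throughout the Hardy-Littlewood analysis. This is plausible since
\begin{align*}
\mathbb{P}_2 \;=\; \mathbb{P}_2^{\neq} \;\sqcup\; \{p^2 : p \in \mathbb{P}\},
\end{align*}
and the prime number theorem gives $\#\{p^2 \leq x\} \sim 2\sqrt{x}/\log x$, which is much sparser than $\#(\mathbb{P}_2 \cap [1,x]) \sim x \log\log x /\log x$ by Landau's theorem.

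The plan is to re-run the Hardy-Littlewood machinery used to prove Theorem \ref{maintheorempapernotrepeated}, substituting the generating function with
\begin{align*}
F_{\mathbb{P}_2^{\neq}}(e^{-s}) \;=\; F_{\mathbb{P}_2}(e^{-s}) \cdot \prod_{p \in \mathbb{P}} \bigl(1 - e^{-p^2 s}\bigr),
\end{align*}
and to track how the additive correction
\begin{align*}
R(s) \;:=\; \sum_{p \in \mathbb{P}} \log\bigl(1 - e^{-p^2 s}\bigr) \;=\; -\sum_{\ell \geq 1} \frac{1}{\ell} \sum_{p \in \mathbb{P}} e^{-\ell p^2 s}
\end{align*}
propagates. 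On the principal major arc, at a point $s = \beta + 2\pi \I h/q$ with $\beta > 0$ the saddle radius, partial summation against $\pi(x)$ in each inner sum yields $\sum_p e^{-\ell p^2 \beta} \ll (\ell\beta)^{-1/2}/\log(1/(\ell\beta))$, whence $|R(s)| \ll \beta^{-1/2}/\log(1/\beta)$. Since $\log F_{\mathbb{P}_2}(e^{-\beta}) \asymp \beta^{-1} \log\log(1/\beta)/\log(1/\beta)$, the relative perturbation is of order $\sqrt{\beta}/\log\log(1/\beta)$. At the saddle radius $\beta \asymp \sqrt{\log\log n / (n\log n)}$ this is comfortably absorbed into the $O(\log\log n/\log n)$ term already present in the statement, so the saddle-point equation, the Gaussian normalisation around the saddle, and the Mertens-type expansion producing the Meissel-Mertens constant in $\mathfrak{c}_2 = M - \log 2$ all carry through unchanged, preserving $\mathfrak{c}_1, \mathfrak{c}_2, \mathfrak{c}_3$.

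On the non-principal major arcs indexed by $h/q$ with $q \geq 2$, the analogous twisted correction $\sum_p \log(1 - e^{2\pi \I h p^2/q} e^{-p^2 s})$ admits the same bound $\ll \beta^{-1/2}/\log(1/\beta)$, reinforced if necessary by Weyl-type cancellation on $\sum_p e^{2\pi \I h p^2/q}$. For the minor arc, the double Weyl sum over prime products bounded in the proof of Theorem \ref{maintheorempapernotrepeated} differs from its $p_1 \neq p_2$ analogue by the single sum $\sum_p e^{2\pi \I \alpha p^2}$, trivially bounded by $\pi(\sqrt{N}) \ll \sqrt{N}/\log N$, which is dwarfed by the existing minor-arc estimate. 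Hence the minor arc remains negligible for $\mathbb{P}_2^{\neq}$ as well.

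The main obstacle is bookkeeping rather than novelty: one must verify that the perturbation $R(s)$ shifts the saddle point only by lower-order amounts, in both location and second derivative, so that neither the leading exponent nor the pre-exponential factor is altered. The quantitative bounds above reduce this to a routine but finicky check, particularly around the branch of the complex logarithm on the non-principal major arcs. Once verified, Theorem \ref{theoremO2} follows with precisely the constants of Theorem \ref{maintheorempapernotrepeated}.
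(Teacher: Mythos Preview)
Your proposal is correct and is essentially the paper's own approach: the paper introduces the two-parameter generating function $\Phi_{\lambda,\mu}=\lambda\Phi_{\mathbb{P}_2}+\mu\Phi_{\mathbb{P}^2}$ (so your correction $R$ is exactly $-\Phi_{\mathbb{P}^2}$), runs the full circle-method analysis once for general $(\lambda,\mu)$, and then specializes to $(\lambda,\mu)=(\tfrac12,-\tfrac12)$ for $\mathbb{P}_2^{\neq}$, with the size of $\Phi_{\mathbb{P}^2}$ and its derivatives supplied by Gafni's prime-power result \cite{gafniprimepowers}. The only cosmetic difference is that the paper carries the prime-square term as a parameter from the outset rather than as a perturbation of an already-established Theorem~\ref{maintheorempapernotrepeated}.
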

%
%\begin{color}{red}
%Now set $\mathbb{P}_2^\sharp = \{p_1 p_2 : (p_1,p_2) \textnormal{ and } p_1,p_2 \in \mathbb{P}\}$. 
Now set $\mathbb{P}_2^\sharp =\{ (p_1,p_2)\in\N^2 : p_1,p_2\in\mathbb{P}\}$. 
Further, we identify an element $(p_1,p_2)\in\mathbb{P}_2^\sharp$ with the part $p_1p_2$.
Here semiprimes are counted twice since $(p_1,p_2) \neq (p_2,p_1)$ unless $p_1=p_2$. 
The number of partitions with all parts in $\mathbb{P}_2^\sharp(n)$ is denoted by $\partition_{\mathbb{P}^2}^\sharp(n)$ and
the asymptotic behaviour of $\partition_{\mathbb{P}^2}^\sharp(n)$ is stated in Theorem~\ref{maintheorempaper} below.
The techniques employed to prove Theorem~\ref{maintheorempapernotrepeated}, Theorem~\ref{theoremO2}, and Theorem~\ref{maintheorempaper} are the same.
In fact, we can prove all three theorems in one single bundle by studying a slightly more general case covering all three results simultaneously. We do this by assigning to squares and non-squares different weights in the corresponding generating function, see Section~\ref{sec:Gen_function_used}.
%\end{color} 

%Now set $\mathbb{P}_2^\sharp(n) = \{ p_1 p_2 : p_1,p_2 \in \mathbb{P}, p_1p_2 = p_2p_1\}$ in which semiprimes are counted twice due to the ordering of the factors $p_1 p_2 = p_2 p_1$. The techniques employed to prove Theorem \ref{maintheorempapernotrepeated} can also be repurposed to estimate the growth of $\partition_{\mathbb{P}^2}^\sharp(n)$. Indeed, Theorem \ref{theoremO2} distinguishes between the order of the factors in the product of primes, e.g. the pair $2 \times 3$ and $3 \times 2$ is only counted once. In order to prove Theorem \ref{maintheorempapernotrepeated} we shall first need to count the semiprimes with repetitions and then remove squares of single primes. In order to do so, we shall need the following result.
%
\begin{theorem} \label{maintheorempaper}
Let $\mathbb{P}_2^\sharp(n)$ be defined as above. The number $\partition_{\mathbb{P}_2^\sharp}$ of such partitions of $n$ with all parts lying in $\mathbb{P}_2^\sharp$ satisfies
\begin{align*}
\partition_{\mathbb{P}_2^\sharp}(n) &\sim \mathfrak{c}_1 n^{-\frac{3}{4}} (\log n)^{-\frac{1}{4}} (\log \log n + \mathfrak{c}_2 )^{\frac{1}{4}} \nonumber \\
&\quad \times \exp \bigg\{\mathfrak{c}_3 \bigg( \frac{n}{\log n}\bigg)^{\frac{1}{2}} (\log \log n + \mathfrak{c}_2)^{\frac{1}{2}} \bigg(1+O\bigg(\frac{\log \log n}{\log n}\bigg)\bigg)\bigg\},
\end{align*}
as $n \to \infty$  and where the constants are the same as in Theorem \textnormal{\ref{maintheorempapernotrepeated}}.
\end{theorem}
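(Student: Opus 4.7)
The plan is to follow the Hardy--Littlewood circle method combined with saddle-point analysis, in the unified weighted form announced after Theorem~\ref{theoremO2}, where squared parts $p^{2}$ carry weight $a$ and non-squared parts $p_{1}p_{2}$ with $p_{1}<p_{2}$ carry weight $b$. Theorem~\ref{maintheorempaper} corresponds to the specialization $(a,b)=(1,2)$, reflecting that an unordered pair $\{p_{1},p_{2}\}$ with $p_{1}\neq p_{2}$ gives rise to the two ordered pairs $(p_{1},p_{2})$ and $(p_{2},p_{1})$ in $\mathbb{P}_{2}^{\sharp}$. First I would set up
\begin{align*}
F_{\sharp}(z) \;=\; \prod_{p}\frac{1}{1-z^{p^{2}}}\;\prod_{p_{1}<p_{2}}\frac{1}{(1-z^{p_{1}p_{2}})^{2}} \;=\; \sum_{n\ge 0}\partition_{\mathbb{P}_{2}^{\sharp}}(n)\,z^{n},
\end{align*}
apply Cauchy's formula on $|z|=\e^{-\rho_{n}}$, and split the $\theta$-integral, parameterized via $z=\e^{-\rho_{n}+2\pi \I \theta}$, into a major arc centred at $\theta=0$ and a minor-arc remainder.

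The first technical task is to control
\begin{align*}
L(\rho)\;:=\;\log F_{\sharp}(\e^{-\rho})\;=\;\sum_{k\ge 1}\frac{1}{k}\sum_{p_{1},p_{2}\in\mathbb{P}}\e^{-k\rho\,p_{1}p_{2}}
\end{align*}
as $\rho\to 0^{+}$, where the inner sum is over ordered prime pairs. Its associated Dirichlet series is $P(s)^{2}$ with $P(s)=\sum_{p}p^{-s}$ the prime zeta function; and since $P(s)=\log\tfrac{1}{s-1}+(\text{const})+O(s-1)$ as $s\to 1^{+}$ with the constant encoding the Meissel--Mertens constant $M$, a Mellin--Barnes representation of $-\log(1-\e^{-t})$ shifts a contour across the double-logarithmic singularity of $P(s)^{2}$ and produces
\begin{align*}
L(\rho)\;\sim\;\frac{C_{1}}{\rho\log(1/\rho)}\bigl(\log\log(1/\rho)+C_{2}\bigr)\Bigl(1+O\bigl((\log(1/\rho))^{-1}\bigr)\Bigr),
\end{align*}
with explicit constants $C_{1},C_{2}$ built from $\zeta(2)$, $M$, the weight $b=2$, and a $\log 2$ coming from ordered-versus-unordered counting in the semiprime counting function. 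Compared with the unordered case of Theorem~\ref{maintheorempapernotrepeated}, the leading coefficient in $L(\rho)$ is doubled, and this discrepancy propagates through the saddle equation into the substitution of $\log n$ for $\log(n/2)$ in the final formula, while the secondary constant $\mathfrak{c}_{2}=M-\log 2$ is preserved.

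For the major arc I would determine $\rho_{n}$ from $-L'(\rho_{n})=n$, which yields $\rho_{n}\asymp\bigl((\log\log n+\mathfrak{c}_{2})/(n\log n)\bigr)^{1/2}$, and Taylor-expand $L(\rho_{n}-2\pi \I\theta)+(n+1)(\rho_{n}-2\pi \I\theta)$ to second order on $|\theta|\le \rho_{n}^{1+\delta}$. After the standard Gaussian integration and careful bookkeeping of the second derivative of $L$ and of the weighted constants, one obtains the prefactor $\mathfrak{c}_{1}n^{-3/4}(\log n)^{-1/4}(\log\log n+\mathfrak{c}_{2})^{1/4}$ and the exponent $\mathfrak{c}_{3}(n/\log n)^{1/2}(\log\log n+\mathfrak{c}_{2})^{1/2}$ with the stated values of $\mathfrak{c}_{1},\mathfrak{c}_{2},\mathfrak{c}_{3}$; the mildly odd-looking form of $\mathfrak{c}_{3}=(2^{1/2}+2^{-3/2})(4\zeta(2))^{1/2}$ emerges from the asymmetric way in which $\rho n$ and $L(\rho)$ contribute at the saddle once the slowly varying $\log\log(1/\rho)$ factor is kept track of.

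The main obstacle is the minor-arc estimate. On $|\theta|>\rho_{n}^{1+\delta}$ one needs $|F_{\sharp}(\e^{-\rho_{n}+2\pi\I\theta})|$ to be exponentially smaller than the Gaussian width saved on the major arc; by partial summation applied to the difference $\log F_{\sharp}(\e^{-\rho_{n}+2\pi\I\theta})-\log F_{\sharp}(\e^{-\rho_{n}})$, this reduces to a power-saving estimate on the double Weyl sum over prime products,
\begin{align*}
W(\alpha;X)\;=\;\sumtwo_{p_{1},p_{2}\le X}\e(\alpha\,p_{1}p_{2}),
\end{align*}
for $\alpha=\theta$ badly approximable by rationals with small denominator. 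This is precisely the extension of Vinogradov's method to products of two primes that the abstract announces; I would fix one prime variable and run a Vaughan-type identity in the other, then combine across the two prime variables via Cauchy--Schwarz (or a large-sieve inequality) to obtain a bound of shape $|W(\alpha;X)|\ll X^{2}(\log X)^{-A}$ for any fixed $A$. With this in hand the minor arc is negligible, and combining it with the major-arc Gaussian main term, and then specialising to $(a,b)=(1,2)$, yields Theorem~\ref{maintheorempaper}.
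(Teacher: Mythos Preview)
Your outline captures the overall architecture correctly---circle method, Mellin analysis of $P(s)^{2}$ around its logarithmic singularity, saddle-point evaluation---but there is a genuine gap in your arc decomposition. You split the circle into a single major arc $|\theta|\le\rho_{n}^{1+\delta}$ and a minor-arc remainder, and you acknowledge that your Vinogradov-type bound on $W(\alpha;X)$ requires $\alpha$ to be badly approximable by rationals with small denominator. But you never say what happens when $\theta$ is close to $a/q$ with $2\le q\le(\log X)^{A}$: these $\theta$ lie outside your major arc and are \emph{not} badly approximable, so your Weyl-sum bound gives nothing there. The paper devotes an entire section to these non-principal major arcs, using Siegel--Walfisz to evaluate $\sum_{p_{1}p_{2}\equiv\ell\bmod q}e^{-\gamma p_{1}p_{2}}$ and then the Ramanujan-sum identity $S^{*}(q,a)=\mu(q)$ to show that $\real\Phi_{\mathbb{P}_{2}}(\rho\e(\alpha))\le\tfrac{3}{4}\Phi_{\mathbb{P}_{2}}(\rho)$ on each $\mathfrak{M}(q,a)$ with $q\ge 2$. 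Without this, your minor-arc contribution is simply not controlled.

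Your sketch of the Weyl-sum bound itself is also too optimistic. Fixing one prime $p_{1}$ and running Vaughan in $p_{2}$ gives a bound on $\sum_{p_{2}}\e(\beta p_{2})$ with $\beta=\alpha p_{1}$, but the Diophantine quality of $\beta$ depends on whether $p_{1}\mid q$; the paper handles this by proving a weakened Vinogradov bound with a parameter $\Upsilon$ (Proposition~\ref{extendedvinogradovhypothesis}) and splitting on that divisibility. Moreover, summing the resulting bounds over $p_{1}\le M$ only covers the strip $p_{1}\le M$ under the hyperbola $p_{1}p_{2}\le X$; the central region $p_{1},p_{2}>M$ is treated separately via the bilinear exponential-sum bound of Iwaniec--Kowalski (Lemma~\ref{lemmaiwanieckowalski}), after which $M$ is optimised. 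A single Cauchy--Schwarz across the two variables, as you propose, does not obviously produce the required saving.
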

\begin{theorem} \label{differencetheorempaper}
Using the notation and definitions above, we have
\begin{align*}
\mathfrak{p}_{\mathbb{P}_2}(n+1) - \mathfrak{p}_{\mathbb{P}_2}(n) \sim \mathfrak{c}_4\bigg(\frac{\mathfrak{c}_2 + \log \log n + o(1)}{n \log (n/2)}\bigg)^{\frac{1}{2}}\mathfrak{p}_{\mathbb{P}_2}(n)
\end{align*}
as $n \to \infty$ and where the constants are the same as in Theorem \textnormal{\ref{maintheorempapernotrepeated}} with $\mathfrak{c}_4 = 2\pi\sqrt{\frac{1}{3}}$.
\end{theorem}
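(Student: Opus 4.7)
The plan is to deduce Theorem~\ref{differencetheorempaper} from Theorem~\ref{maintheorempapernotrepeated} by a Taylor-expansion argument on the logarithm of $\mathfrak{p}_{\mathbb{P}_2}(n)$, avoiding a fresh circle-method computation. Writing
\[
\mathfrak{p}_{\mathbb{P}_2}(n+1)-\mathfrak{p}_{\mathbb{P}_2}(n)=\mathfrak{p}_{\mathbb{P}_2}(n)\bigl(\mathfrak{p}_{\mathbb{P}_2}(n+1)/\mathfrak{p}_{\mathbb{P}_2}(n)-1\bigr),
\]
it suffices to evaluate the ratio. I would decompose Theorem~\ref{maintheorempapernotrepeated} as $\mathfrak{p}_{\mathbb{P}_2}(n)=\mathfrak{c}_1 A(n)\exp\{F(n)+R(n)\}(1+o(1))$, with $A(n)=n^{-3/4}(\log(n/2))^{-1/4}(\log\log n+\mathfrak{c}_2)^{1/4}$ the polylogarithmic prefactor, $F(n)=\mathfrak{c}_3 (n/\log(n/2))^{1/2}(\log\log n+\mathfrak{c}_2)^{1/2}$ the leading exponent, and $R(n)=O\bigl(F(n)\log\log n/\log n\bigr)$ the error in the exponent. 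Taking the logarithm of the ratio splits the problem into three pieces: $F(n+1)-F(n)$, $R(n+1)-R(n)$, and $\log A(n+1)-\log A(n)$, and the aim is to show the first piece dominates.

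The key step is differentiating $F$. Writing $F(n)=\mathfrak{c}_3 n^{1/2}H(n)$ with the slowly varying factor $H(n)=(\log\log n+\mathfrak{c}_2)^{1/2}(\log(n/2))^{-1/2}$, the mean value theorem gives
\[
F(n+1)-F(n)\sim F'(n)\sim \frac{\mathfrak{c}_3}{2}\sqrt{\frac{\log\log n+\mathfrak{c}_2}{n\log(n/2)}},
\]
the cross-term $\mathfrak{c}_3\sqrt{n}\,H'(n)$ being a factor of $(\log n)^{-1}$ smaller and hence negligible. Since $\log A(n+1)-\log A(n)=O(1/n)$ and $R(n+1)-R(n)=O\bigl(F'(n)\log\log n/\log n\bigr)$ are both of strictly smaller order than $F'(n)\asymp (n\log n)^{-1/2}(\log\log n)^{1/2}$, we obtain $\mathfrak{p}_{\mathbb{P}_2}(n+1)/\mathfrak{p}_{\mathbb{P}_2}(n)=1+F'(n)(1+o(1))$, which yields Theorem~\ref{differencetheorempaper} with $\mathfrak{c}_4=\mathfrak{c}_3/2$ after constant identification via $\zeta(2)=\pi^2/6$; the multiplicative correction $1+O(\log\log n/\log n)$, once propagated under the square root, contributes an $O((\log\log n)^2/\log n)=o(1)$ term inside the bracket, matching the form stated.

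The main obstacle I anticipate is that the pointwise ``$\sim$'' of Theorem~\ref{maintheorempapernotrepeated} is a priori insufficient to subtract, since the implicit $o(1)$ errors, evaluated independently at $n$ and $n+1$, could swamp the small quantity $F'(n)\to 0$ that we wish to isolate. The first step must therefore be to revisit the circle-method proof of Theorem~\ref{maintheorempapernotrepeated} (beginning in Section~\ref{sec:Gen_function_used}) and extract a version uniform over a unit window around $n$, with an effective error smaller than $F'(n)$. Once that uniformity is in hand, the remaining estimates are routine derivatives of slowly varying logarithmic functions, and one can cross-check the argument against the classical Hardy--Ramanujan case $\mathfrak{p}_{\mathbb{N}}(n+1)-\mathfrak{p}_{\mathbb{N}}(n)\sim (\pi/\sqrt{6n})\mathfrak{p}_{\mathbb{N}}(n)$, for which the same strategy recovers the correct constant.
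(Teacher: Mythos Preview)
Your plan has a genuine gap that you partly anticipate but underestimate. The quantity you need to isolate, $F'(n)\asymp \log(1/\rho)\asymp (n\log n)^{-1/2}(\log\log n)^{1/2}$, is of order roughly $n^{-1/2}$, whereas the multiplicative error in the saddle-point formula the paper actually establishes (Theorem~\ref{theorem1over5}) is only $1+O(n^{-1/6})$. Subtracting $\partition_{\mathbb{P}_2}(n+1)$ and $\partition_{\mathbb{P}_2}(n)$ computed separately therefore leaves an uncontrolled $O(n^{-1/6})$ that swamps the target. Your proposed remedy, ``extract a version uniform over a unit window with effective error smaller than $F'(n)$'', would require pushing the saddle-point error below $n^{-1/2}$, which the present analysis does not achieve and which is a much stronger statement than anything proved in the paper. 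Likewise your estimate $R(n+1)-R(n)=O(F'(n)\log\log n/\log n)$ is unjustified: $R$ is only given as a big-$O$ bound, so nothing controls its increment.

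The paper avoids this entirely by a different mechanism (Section~\ref{sec:mainresult2}, Theorem~\ref{difference1over5}): it writes the Cauchy integral for the \emph{difference} using the single saddle point $\rho=\rho(n)$,
\[
\partition_{\lambda,\mu}(n+1)-\partition_{\lambda,\mu}(n)=\rho^{-n}\int_{-1/2}^{1/2}\Psi_{\lambda,\mu}(\rho\e(\alpha))\e(-n\alpha)\bigl(\rho^{-1}\e(-\alpha)-1\bigr)\,d\alpha,
\]
so the small factor $\rho^{-1}\e(-\alpha)-1\sim 1/X$ appears \emph{inside} the integrand on the principal arc before any error is incurred. The same arc dissection and the same $O(n^{-1/6})$ saddle-point error then apply to an integral that is already of size $\sim X^{-1}\partition_{\lambda,\mu}(n)$, which is exactly what is needed. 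The upshot is that you should not try to sharpen the individual asymptotic and subtract; you should compute the difference integral directly and then invoke Proposition~\ref{propositionmagnitude} to read off $\log(1/\rho)$. Finally, note that your constant identification $\mathfrak{c}_4=\mathfrak{c}_3/2$ does not match the paper's stated values; once you follow the integral route the constant comes straight from $\log(1/\rho)=1/X$ via \eqref{magnitudes1} rather than from differentiating $\mathfrak{c}_3(\cdot)^{1/2}$.
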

\subsection{Previous results in the literature}
Over the last decade there has been a strong interest in studying partitions by employing the Hardy-Littlewood circle method and the technique pioneered in \cite{vaughan} and then in \cite{vaughansquares}. 
An important result using this technique was established by Gafni \cite{gafniintegerpowers} in 2016 and generalized to arithmetic progressions in 2018 by Berndt, Malik, and Zaharescu \cite{berndtmalikzaharescu}.
\begin{theorem}[Gafni]
If $\Nset_k = \{x^k: x \in \N, k \in \N_{\ge 2}\}$, then as $n \to \infty$ one has that
\begin{align*}
\partition_{\Nset_k}(n) \sim \mathfrak{c}_1 \exp(\mathfrak{c}_2 n^{\frac{1}{k+1}})n^{-\frac{3k+1}{2(k+1)}},
\end{align*}
where $\mathfrak{c}_1$ and $\mathfrak{c}_2$ are positive constants that can be made explicit and depend only on $k$.
\end{theorem}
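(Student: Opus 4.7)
The plan is to execute the saddle-point form of the Hardy--Littlewood circle method on the generating function
\[
F_k(z) = \sum_{n\ge 0}\partition_{\Nset_k}(n)z^n = \prod_{m\ge 1}(1-z^{m^k})^{-1}.
\]
Parametrizing the circle $|z|=\e^{-\alpha}$ by $z = \e^{-\alpha + 2\pi\I\theta}$, Cauchy's theorem gives
\[
\partition_{\Nset_k}(n) = \int_{-1/2}^{1/2} F_k\bigl(\e^{-\alpha + 2\pi\I\theta}\bigr)\,\e^{n\alpha - 2\pi\I n\theta}\,d\theta.
\]
I would choose $\alpha=\alpha^*(n)>0$ to be the real saddle of $F_k(\e^{-\alpha})\e^{n\alpha}$ and split the integration range into a major arc $|\theta|\le\theta_0$ around $\theta=0$ and its complementary minor arc, with $\theta_0=\theta_0(n)$ to be calibrated later against both the Gaussian tail and the Weyl regime.

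For the major arc I would derive the asymptotics of $\log F_k(\e^{-\alpha})$ by Mellin inversion. From $\log F_k(\e^{-\alpha}) = \sum_{m,j\ge 1}j^{-1}\e^{-j\alpha m^k}$ together with $\e^{-x}=(2\pi\I)^{-1}\int_{(c)}\Gamma(s)x^{-s}\,ds$ one obtains, for any $c>1/k$,
\[
\log F_k(\e^{-\alpha}) = \frac{1}{2\pi\I}\int_{(c)}\Gamma(s)\zeta(s+1)\zeta(ks)\alpha^{-s}\,ds.
\]
Shifting the contour to $\Re s = -\delta < 0$ collects the simple pole of $\zeta(ks)$ at $s=1/k$ and the double pole of $\Gamma(s)\zeta(s+1)$ at $s=0$, yielding
\[
\log F_k(\e^{-\alpha}) = A_k\alpha^{-1/k} + \tfrac{1}{2}\log\alpha + B_k + O(\alpha^\delta),\qquad A_k := \Gamma(1+1/k)\zeta(1+1/k),
\]
with an explicit constant $B_k$ depending only on $k$. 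The saddle equation $n = -\partial_\alpha\log F_k(\e^{-\alpha})|_{\alpha^*}$ then gives $\alpha^*\sim (A_k/k)^{k/(k+1)}n^{-k/(k+1)}$. A second-order Taylor expansion of $\log F_k(\e^{-\alpha^*+2\pi\I\theta})$ in $\theta$ produces a Gaussian integrand, and Laplace-type integration over the major arc returns a main contribution
\[
\frac{F_k(\e^{-\alpha^*})\,\e^{n\alpha^*}}{\sqrt{2\pi f_k''(\alpha^*)}}, \qquad f_k''(\alpha^*)\asymp n^{(2k+1)/(k+1)}.
\]
The $\tfrac{1}{2}\log\alpha$ term supplies an extra factor $\alpha^{*1/2}\asymp n^{-k/(2(k+1))}$; combined with $f_k''(\alpha^*)^{-1/2}\asymp n^{-(2k+1)/(2(k+1))}$ and the exponential $\exp(A_k(1+1/k)\alpha^{*-1/k})$ this assembles into $\mathfrak{c}_1 n^{-(3k+1)/(2(k+1))}\exp(\mathfrak{c}_2 n^{1/(k+1)})$ with explicit $\mathfrak{c}_1,\mathfrak{c}_2$ depending only on $k$ (in particular $\mathfrak{c}_2 = (k+1)(A_k/k)^{k/(k+1)}$).

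On the minor arc I would bound $|F_k(\e^{-\alpha^* + 2\pi\I\theta})|$ well below $F_k(\e^{-\alpha^*})$. Starting from
\[
\log|F_k(\e^{-\alpha + 2\pi\I\theta})| = \sum_{m,j\ge 1}\frac{\e^{-j\alpha m^k}\cos(2\pi j\theta m^k)}{j},
\]
the effective range for $m$ is $m\le M\asymp\alpha^{*-1/k}$. A Dirichlet approximation $|\theta - a/q|\le 1/(qQ)$ with $1\le q\le Q$ reduces matters to a Weyl-type bound $|\sum_{m\le M}\e(\theta m^k)|\ll M^{1-\sigma_k+\varepsilon}$ for some $\sigma_k>0$: classically $\sigma_k = 2^{1-k}$ from Weyl's inequality, sharpened for larger $k$ by Vinogradov's mean-value theorem. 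Transferring this saving through the $(m,j)$ sum yields $\log|F_k(\e^{-\alpha^* + 2\pi\I\theta})|\le \log F_k(\e^{-\alpha^*}) - c\,\alpha^{*-1/k+\eta}$ for some $c,\eta>0$ on the minor arc, which crushes both the Gaussian tail from the major arc and the length of the minor arc. The principal hurdle is precisely this minor-arc step: the single-sum Weyl/Vinogradov saving must be promoted to a quantitative saving on the full infinite product $F_k$, and $\theta_0$ must be tuned so that the quadratic Taylor expansion remains accurate on the major arc while the Weyl regime is entered on its complement. Once this balance is struck the minor-arc contribution is $o(1)$ times the main term and the theorem follows.
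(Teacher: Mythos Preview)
The paper does not actually prove this theorem; it is quoted as a known result of Gafni \cite{gafniintegerpowers} in the literature survey of Section~1.1, so there is no ``paper's own proof'' to compare against. That said, your outline is essentially the method Gafni (following Vaughan \cite{vaughansquares}) uses: Mellin inversion of $\log F_k(\e^{-\alpha})$ to extract the terms $A_k\alpha^{-1/k}+\tfrac12\log\alpha+B_k$, a saddle-point evaluation on the principal arc, and Weyl-type bounds away from it. The numerics you give ($A_k=\Gamma(1+1/k)\zeta(1+1/k)$, $\mathfrak{c}_2=(k+1)(A_k/k)^{k/(k+1)}$, and the exponent $-(3k+1)/(2(k+1))$) are all correct.

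One caveat: your minor-arc sketch glosses over the non-principal major arcs, i.e.\ $\theta$ near $a/q$ with $1<q\le Q$. The Weyl/Vinogradov saving you invoke holds only when $q$ is not too small; for small $q>1$ one instead computes $\log F_k$ in residue classes (via the Hurwitz zeta function or a Gauss-sum decomposition) and shows the resulting main term is strictly smaller than the $q=1$ contribution. This is the analogue of Section~\ref{sec:nonprincipalarcs} in the present paper and is carried out explicitly in \cite{gafniintegerpowers,vaughansquares}. Apart from this omission your plan is sound and matches the literature.
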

\begin{theorem}[Berndt-Malik-Zaharescu]
If $\Nset_{k,a,b} = \{x^k: x \equiv a \modu b, k \in \N_{\ge 2}, (a,b)=1\}$, then as $n \to \infty$ one has that
\begin{align*}
\partition_{\Nset_{k,a,b}}(n) \sim \mathfrak{c}_1 \exp(\mathfrak{c}_2 n^{\frac{1}{k+1}})n^{-\frac{b+bk+2ak}{2b(k+1)}},
\end{align*}
where $\mathfrak{c}_1$ and $\mathfrak{c}_2$ are positive constants that can be made explicit and depend only on $a,b$ and $k$.
\end{theorem}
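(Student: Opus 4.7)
The plan is to apply the Hardy--Littlewood circle method along the lines pioneered by Vaughan \cite{vaughan} and then specialized to $k$-th powers by Gafni \cite{gafniintegerpowers}, adapting the analysis to incorporate the arithmetic progression condition $x\equiv a\,(\modu\,b)$. First I form the generating function
\begin{equation*}
F(z) \;=\; \prod_{\substack{x\ge 1 \\ x\equiv a\,(\modu\,b)}} \frac{1}{1-z^{x^k}},
\end{equation*}
so that $\partition_{\Nset_{k,a,b}}(n) = [z^n]F(z)$, and recover this coefficient by Cauchy's theorem along a circle $z=\e^{-\rho+2\pi\I\theta}$ with $\theta\in(-1/2,1/2]$, where $\rho=\rho_n>0$ is the saddle point to be chosen. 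The $\theta$-integral is split as usual into major arcs around rationals with small denominator and a complementary minor arc.

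The first analytic step is to determine the asymptotic behaviour of $\log F(\e^{-\rho})$ as $\rho\to 0^+$. Expanding $-\log(1-z^{x^k})=\sum_{m\ge 1}z^{m x^k}/m$ and inserting the Mellin--Barnes representation $\e^{-y}=\frac{1}{2\pi\I}\int_{(c)}\Gamma(s)y^{-s}\,ds$ yields
\begin{equation*}
\log F(\e^{-\rho}) \;=\; \frac{1}{2\pi\I}\int_{(c)} \Gamma(s)\,\zeta(s+1)\,b^{-ks}\,\zeta(ks,a/b)\,\rho^{-s}\,ds,
\end{equation*}
where $\zeta(\cdot,a/b)$ is the Hurwitz zeta function (using that $x=a+by$ for $y\ge 0$). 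Shifting the contour to the left past the simple pole at $s=1/k$ coming from $\zeta(ks,a/b)$, and the double pole at $s=0$ coming from $\Gamma(s)\zeta(s+1)$, and using $\zeta(0,a/b)=\tfrac12-\tfrac{a}{b}$, gives
\begin{equation*}
\log F(\e^{-\rho}) \;=\; \frac{\Gamma(1+1/k)\,\zeta(1+1/k)}{b}\,\rho^{-1/k} \;+\; \Bigl(\tfrac{a}{b}-\tfrac12\Bigr)\log\rho \;+\; C_{k,a,b} \;+\; o(1).
\end{equation*}

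Next I carry out the saddle-point and arc analysis. Differentiating in $\rho$ and imposing $\frac{d}{d\rho}\log F(\e^{-\rho})+n=0$ gives $\rho_n\sim(\Gamma(1+1/k)\zeta(1+1/k)/(bkn))^{k/(k+1)}$, of order $n^{-k/(k+1)}$. On the minor arcs I would employ the Weyl inequality for the exponential sum $\sum_{x\le X,\,x\equiv a\,(\modu\,b)}\e(\theta x^k)$; writing $x=a+by$ reduces this to a classical Weyl sum in $y$ after expanding $(a+by)^k$ binomially, yielding a saving of the form $X^{1-2^{1-k}+\varepsilon}$ whenever $\theta$ has a bad Diophantine approximation, which is ample to make the minor-arc contribution exponentially smaller than the main term. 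On the principal major arc near $\theta=0$, Taylor expansion of $\log F(\e^{-\rho_n+2\pi\I\theta})$ to second order combined with Gaussian integration in $\theta$ produces a factor of order $\rho_n^{(2k+1)/(2k)}$; multiplied by the $\rho_n^{\,a/b-1/2}$ coming from the $\log\rho$ term in $\log F$, this gives an overall polynomial prefactor $\rho_n^{\,a/b+(k+1)/(2k)}$.

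Finally, substituting $\rho_n\asymp n^{-k/(k+1)}$ converts this prefactor into $n^{-k[a/b+(k+1)/(2k)]/(k+1)}=n^{-(b+bk+2ak)/(2b(k+1))}$, matching the statement; while the saddle value $\log F(\e^{-\rho_n})+\rho_n n$ telescopes to $(k+1)(\Gamma(1+1/k)\zeta(1+1/k)/(bk))^{k/(k+1)}n^{1/(k+1)}=\mathfrak{c}_2 n^{1/(k+1)}$. The main technical obstacle is the precise bookkeeping at the double pole $s=0$, since only the coefficient $a/b-\tfrac12$ produced there determines whether the exponent comes out as $(b+bk+2ak)/(2b(k+1))$ rather than a nearby quantity. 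A secondary issue is verifying that the Weyl bound holds uniformly over the arithmetic progression with an error of sufficient quality on the full minor arc, though this is essentially routine after the linear substitution $x=a+by$.
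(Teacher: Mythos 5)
The paper does not prove this theorem — it cites it from Berndt, Malik, and Zaharescu \cite{berndtmalikzaharescu} as background — so there is no internal proof against which to compare your argument. Evaluated on its own, your sketch is correct and follows the approach that Berndt–Malik–Zaharescu actually take, namely Vaughan–Gafni circle method with the arithmetic progression encoded via the Hurwitz zeta function. The Mellin–Barnes representation is right: the series $\sum_{x\equiv a\,(b)}x^{-ks}$ equals $b^{-ks}\zeta(ks,a/b)$, the simple pole at $s=1/k$ has residue $\Gamma(1+1/k)\zeta(1+1/k)b^{-1}\rho^{-1/k}$, and the double pole at $s=0$ contributes $-\zeta(0,a/b)\log\rho=(a/b-\tfrac12)\log\rho$ to leading order, exactly as you state. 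The saddle-point bookkeeping also checks out: $\rho_n n+\kappa\rho_n^{-1/k}=(k+1)(\kappa/k)^{k/(k+1)}n^{1/(k+1)}$ with $\kappa=\Gamma(1+1/k)\zeta(1+1/k)/b$, and $g''(\rho_n)\asymp\rho_n^{-1/k-2}$ gives a Gaussian factor $\rho_n^{(2k+1)/(2k)}$; combined with $\rho_n^{a/b-1/2}$ and $\rho_n\asymp n^{-k/(k+1)}$, the exponent becomes $-\tfrac{k}{k+1}\bigl(\tfrac{a}{b}+\tfrac{k+1}{2k}\bigr)=-\tfrac{b+bk+2ak}{2b(k+1)}$, matching the statement. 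The only pieces you leave implicit are the standard ones: the non-principal major arcs $\mathfrak{M}(q,a)$ with $q\ge2$ (handled via Gauss-sum cancellation after the substitution $x=a+by$) and the precise truncation error in the Mellin contour shift; both are routine for this class of partition problems and do not affect the shape of the asymptotic.
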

One can think of the sets $\Nset_{k}$ and $\Nset_{k,a,b}$ as integer values of the $x^k$ and $(a+bx)^k$ with $(a,b)=1$, respectively. Therefore this begged the question of whether these results could be extended to polynomials of arbitrary degree. This was accomplished in \cite{dr} by considering an exotic zeta function.
\begin{theorem}[Dunn-Robles]
Let $f$ be the polynomial of degree $d$ with $d \ge 2$ and set $\Nset_f = \{f(x): x \in \N \}$. If $\Nset_f \subset \N$ and $\gcd(\Nset_f)=1$, then as $n \to \infty$ one has that
\begin{align*}
\partition_{\Nset_f}(n) \sim \mathfrak{c}_1 \exp(\mathfrak{c}_2 n^{\frac{1}{d+1}})n^{-\frac{2d(1-\zeta(0,\alpha))+1}{2(d+1)}},
\end{align*}
where $\zeta(0,\alpha)$ is a value of an appropriate Matsumoto-Weng zeta function \cite{matsumotoweng}. The positive constants $\mathfrak{c}_1$ and $\mathfrak{c}_2$ can be made explicit and depend only on $f$.
\end{theorem}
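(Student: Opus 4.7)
The plan is to apply the Hardy--Littlewood circle method to the generating series
\[
F(z) = \prod_{m \in \Nset_f} \frac{1}{1-z^m} = \sum_{n \ge 0} \partition_{\Nset_f}(n)\, z^n,
\]
and to recover $\partition_{\Nset_f}(n)$ from Cauchy's formula on the circle $|z| = e^{-\rho}$ for a saddle parameter $\rho = \rho(n) > 0$ to be chosen. Writing $z = e^{-\rho + 2\pi \I \theta}$ with $\theta \in [-\tfrac{1}{2},\tfrac{1}{2}]$, one obtains
\[
\partition_{\Nset_f}(n) = e^{\rho n} \int_{-1/2}^{1/2} F(e^{-\rho + 2\pi \I \theta})\, e^{-2\pi \I n \theta}\, d\theta,
\]
which I would split into a short major arc $\mathfrak{M}$ around $\theta = 0$ and its complementary minor arc $\mathfrak{m}$, following the Vaughan--Gafni template.

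The central analytic input is the behaviour of $F(e^{-\rho})$, which I would study via
\[
\log F(e^{-\rho}) = \sum_{k \ge 1} \frac{1}{k} \sum_{x \ge 1} e^{-k \rho f(x)},
\]
combined with the Mellin identity
\[
\sum_{x \ge 1} e^{-t f(x)} = \frac{1}{2\pi \I}\int_{(c)} \Gamma(s)\, \zeta_f(s)\, t^{-s}\, ds, \qquad \zeta_f(s) := \sum_{x \ge 1} f(x)^{-s}.
\]
The series $\zeta_f$ is the Matsumoto--Weng zeta function attached to $f$; for $f$ of degree $d$ it admits a meromorphic continuation whose rightmost pole is at $s = 1/d$ and whose value $\zeta_f(0)=:\zeta(0,\alpha)$ is exactly the constant appearing in the theorem. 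Pushing the contour past $s = 1/d$ and $s = 0$ (where $\Gamma$ supplies a pole) produces an expansion
\[
\log F(e^{-\rho}) = A_f\, \rho^{-1/d} \;-\; \zeta(0,\alpha)\, \log(1/\rho) \;+\; B_f \;+\; O(\rho^\delta),
\]
for explicit constants $A_f, B_f$ depending on $f$. Balancing $e^{\rho n}$ against $\exp(A_f \rho^{-1/d})$ fixes the saddle at $\rho \asymp n^{-d/(d+1)}$, producing the factor $\exp(\mathfrak{c}_2 n^{1/(d+1)})$ and pinpointing $\zeta(0,\alpha)$ as the origin of the polynomial correction in $n$.

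On $\mathfrak{M}$, I would repeat the Mellin analysis with $\rho$ replaced by $\rho - 2\pi \I \theta$; Taylor-expanding in $\theta$ about the saddle reduces the contribution to a Gaussian integral whose normalization contributes a factor of order $\rho^{(2d+1)/(2d)}$ (coming from $1/\sqrt{\partial_\rho^2 \log F}$). Combining this with the correction $\rho^{-\zeta(0,\alpha)}$ inherited from the $s = 0$ residue delivers the power $n^{-\frac{2d(1-\zeta(0,\alpha))+1}{2(d+1)}}$ together with a closed-form value for $\mathfrak{c}_1$. As a sanity check, when $f(x) = x^k$ one has $\zeta_f(s) = \zeta(ks)$, so $\zeta(0,\alpha) = \zeta(0) = -\tfrac{1}{2}$, and the exponent collapses to Gafni's $-\tfrac{3k+1}{2(k+1)}$.

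The main obstacle will be the minor arc, where I must establish
\[
\bigl|F(e^{-\rho + 2\pi \I \theta})\bigr| \le F(e^{-\rho})\, \exp\bigl(-c\, n^{1/(d+1)}\bigr)
\]
uniformly for $\theta \in \mathfrak{m}$, for some $c = c(f) > 0$. This reduces to a Weyl-type bound on the exponential sum $\sum_{x \le X} e^{2\pi \I \theta f(x)}$ with $X \asymp \rho^{-1/d}$, accessible through Weyl's inequality for polynomial phases of degree $d \ge 2$ together with a Dirichlet approximation of $\theta$. The delicate point is converting this pointwise bound into logarithmic savings inside $\log F$, since only a polynomially small loss on the integral side is tolerated. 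The hypothesis $\gcd(\Nset_f) = 1$ enters crucially here, as it rules out phantom secondary saddles at rational shifts $\theta = a/q$ sitting inside $\mathfrak{m}$. Merging the contributions of $\mathfrak{M}$ and $\mathfrak{m}$, and inserting the explicit residue of $\zeta_f$ at $s = 1/d$, then yields the stated asymptotic with both $\mathfrak{c}_1$ and $\mathfrak{c}_2$ expressed in closed form in terms of the leading coefficient of $f$ and the Matsumoto--Weng special value $\zeta(0,\alpha)$.
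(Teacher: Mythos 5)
The theorem you are sketching is not proved in this paper: it is quoted in Section~1.1 as a result of Dunn and Robles \cite{dr}, so there is no internal proof to compare against. Your sketch does follow the same Vaughan--Gafni circle-method template that \cite{dr} adapts from \cite{gafniintegerpowers} and that the present paper in turn applies to semiprimes, so the architecture is sound, but two points warrant flagging.

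First, there is a sign slip in the displayed expansion. The pole at $s=0$ is \emph{double}: $\Gamma(s)\zeta(s+1)\sim s^{-2}$ near the origin, and the residue contributes $+\zeta(0,\alpha)\log(1/\rho)$ to $\log F(e^{-\rho})$, not $-\zeta(0,\alpha)\log(1/\rho)$ as you wrote. Your own subsequent factor $\rho^{-\zeta(0,\alpha)}$, and your check against Gafni's exponent $-\tfrac{3k+1}{2(k+1)}$, are both consistent with the correct sign, so the error is local and does not propagate, but the internal inconsistency would confuse a reader. Second, and more substantively, you place the rationals $\theta=a/q$ with small $q>1$ inside $\mathfrak{m}$ and propose to handle them by Weyl's inequality, but Weyl gives essentially nothing at such points: $\sum_{x\le X}\e((a/q)f(x))=\tfrac{X}{q}\sum_{y\bmod q}\e(af(y)/q)+O(q)$, which can be of order $X$. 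Those arcs require the separate non-principal-major-arc argument: one shows $\real\Phi(\rho\e(\alpha))\le (1-\delta)\Phi(\rho)$ there by controlling the complete exponential sum $\sum_{y\bmod q}\e(af(y)/q)$, and this is precisely where $\gcd(\Nset_f)=1$ is used (if the gcd were $g>1$, the sum at $q=g$ would be full-size and the arc at $a/g$ would compete with the principal one). Weyl differencing handles only the genuinely minor arcs, and even there the pointwise bound on $\sum_{x\le X}\e(\theta f(x))$ has to be propagated into a bound on $\Phi(\rho\e(\theta))$ via the truncation-in-$j$ and integral-comparison machinery of the sort the present paper builds for semiprimes in Section~5; your sketch acknowledges this is ``delicate'' but leaves the mechanism unspecified.
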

Other very recent results include \cite{berndtrobleszaharescuzeindler} and \cite{bbbf}. We now move on to restricted partitions over primes and we state an important result of Gafni \cite{gafniprimepowers} from 2021.
\begin{theorem}[Gafni]
If $\mathbb{P}^k = \{p^k:p \in \mathbb{P}, k \in \mathbb{N}\}$, then as $n \to \infty$ one has that
\begin{align*}
\partition_{\mathbb{P}^k}(n) \sim \mathfrak{c}_1 \exp\bigg(\mathfrak{c}_2 \frac{n^{\frac{1}{k+1}}}{(\log n)^{\frac{k}{k+1}}}(1+o(1))\bigg)n^{-\frac{2k+1}{2k+2}} (\log n)^{-\frac{k}{2k+2}},
\end{align*}
where $\mathfrak{c}_1$ and $\mathfrak{c}_2$ are positive constants that can be made explicit and depend only on $k$.
\end{theorem}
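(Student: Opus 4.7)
The plan is to apply the Hardy--Littlewood circle method together with a saddle-point analysis to the generating series
$$F_k(z) := \sum_{n\ge 0} \partition_{\mathbb{P}^k}(n)\,z^n = \prod_{p\in\mathbb{P}} \frac{1}{1-z^{p^k}}.$$
Setting $z = e^{-\alpha + 2\pi \I \theta}$ and $L_k(\alpha) := \log F_k(e^{-\alpha})$, I would first obtain the asymptotic of $L_k(\alpha)$ as $\alpha \to 0^+$ by Mellin inversion: writing
$$L_k(\alpha) = \sum_{p\in\mathbb{P}}\sum_{m \ge 1} \frac{e^{-m\alpha p^k}}{m} = \frac{1}{2\pi \I}\int_{(c)} \Gamma(s)\,\zeta(s+1)\,P(ks)\,\alpha^{-s}\,ds,$$
where $P(s) = \sum_{p\in\mathbb{P}} p^{-s}$ is the prime zeta function, the Landau expansion $P(s) = \log\tfrac{1}{s-1} + O(1)$ near $s = 1$ places a logarithmic singularity of $P(ks)$ at $s = 1/k$. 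Shifting the contour past this singularity yields
$$L_k(\alpha) \sim \frac{\Gamma(1+1/k)\,\zeta(1+1/k)}{\alpha^{1/k}\log(1/\alpha)},$$
and analogous asymptotics for $L_k'(\alpha)$ and $L_k''(\alpha)$ follow by differentiating under the integral.

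Next, I would solve the saddle-point equation $-L_k'(\alpha^*) = n$. Writing $A := \Gamma(1+1/k)\,\zeta(1+1/k)$, iteration gives
$$\alpha^* = \bigg(\frac{A(k+1)}{k^2\,n \log n}\bigg)^{\!k/(k+1)}(1+o(1)), \qquad \log(1/\alpha^*) \sim \frac{k}{k+1}\log n.$$
Using the identity $n\alpha^* = L_k(\alpha^*)/k$, valid to leading order, the exponent $L_k(\alpha^*) + n\alpha^* = \tfrac{k+1}{k} L_k(\alpha^*)$ produces a main term of the form $\mathfrak{c}_2 \cdot n^{1/(k+1)}/(\log n)^{k/(k+1)}$, with $\mathfrak{c}_2$ an explicit constant in $k$, $\Gamma(1+1/k)$ and $\zeta(1+1/k)$. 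A direct computation gives $L_k''(\alpha^*) \asymp n^{(2k+1)/(k+1)}(\log n)^{k/(k+1)}$, so the Hardy--Ramanujan-type saddle-point formula
$$\partition_{\mathbb{P}^k}(n) \sim \frac{\exp\big(L_k(\alpha^*)+n\alpha^*\big)}{\sqrt{2\pi\, L_k''(\alpha^*)}}$$
delivers the polynomial prefactor $n^{-(2k+1)/(2k+2)}(\log n)^{-k/(2k+2)}$ and fixes the constant $\mathfrak{c}_1$.

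The remaining step, and the main obstacle, is to show that the Farey dissection contributions away from $\theta = 0$ are negligible compared to $\exp(L_k(\alpha^*)+n\alpha^*)/\sqrt{L_k''(\alpha^*)}$. On the non-principal major arcs centred at $a/q$ with $q > 1$, the prime number theorem in arithmetic progressions modulo $q$ produces cancellation in $\sum_p e^{-\alpha^* p^k}\,e^{2\pi \I a p^k/q}$, strictly lowering the real part of $\log F_k$. On the minor arcs one needs a Weyl-type bound of the shape $\big|\sum_{p \le P} e^{2\pi \I \theta p^k}\big| \ll P^{1-\delta(k)}$ whenever $\theta$ is poorly approximable by rationals of small denominator; this follows from Vinogradov's mean value theorem and, for large $k$, from Wooley's efficient congruencing. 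The delicate part is propagating this individual cancellation through the logarithm of the infinite product $\prod_p (1-z^{p^k})^{-1}$ and tuning the Farey dissection width as a suitable power of $\alpha^*$ so that the number of arcs times the per-arc saving still beats the enormous exponential gain supplied by the principal major arc.
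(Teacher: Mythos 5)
Your outline follows essentially the same route that Gafni \cite{gafniprimepowers} takes and that the present paper mirrors for semiprimes: the Mellin representation $L_k(\alpha)=\frac{1}{2\pi\I}\int_{(c)}\Gamma(s)\zeta(s+1)\zeta_{\mathcal P}(ks)\alpha^{-s}\,ds$, extraction of the main term from the logarithmic branch point of $\zeta_{\mathcal P}(ks)$ at $s=1/k$ via a keyhole contour, the saddle point at $n=-L_k'(\alpha^*)$, Siegel--Walfisz on the non-principal major arcs, and a Weyl-sum estimate over prime $k$-th powers on the minor arcs. Your saddle-point bookkeeping (the relation $n\alpha^*\sim L_k(\alpha^*)/k$, hence an exponent $\tfrac{k+1}{k}L_k(\alpha^*)$, and $L_k''(\alpha^*)\asymp n^{(2k+1)/(k+1)}(\log n)^{k/(k+1)}$ giving the correct polynomial prefactor) is correct. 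One small slip: the coefficient emerging from the branch cut is $\Gamma(1/k)\zeta(1+1/k)$, not $\Gamma(1+1/k)\zeta(1+1/k)$; with $s=1/k-u$ the Gamma factor is evaluated at $1/k$, and the jump $\vartheta(u)=2\pi\I$ from the logarithmic singularity does not introduce the extra $1/k$. (Compare the paper's citation of \cite[Lemma~3.1]{gafniprimepowers} in \eqref{eq:saddle_gafni}, which for $k=2$, $m=0$ gives $\Gamma(1/2)\zeta(3/2)$.) This does not affect the stated theorem since $\mathfrak c_1,\mathfrak c_2$ are left implicit.

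The one place where your sketch would not survive as written is the minor arcs. You assert that $\bigl|\sum_{p\le P}\e(\theta p^k)\bigr|\ll P^{1-\delta(k)}$ ``follows from Vinogradov's mean value theorem and \ldots Wooley's efficient congruencing.'' Those results control the unrestricted Weyl sum $\sum_{n\le P}\e(\theta n^k)$; they do not, by themselves, yield cancellation in a sum restricted to primes. To pass to primes one must first decompose $\sum_{p\le P}\e(\theta p^k)$ via Vaughan's (or Heath-Brown's) identity into Type~I and Type~II bilinear pieces and only then apply exponential-sum technology. This is precisely what the Kawada--Wooley estimate accomplishes; as noted in Section~\ref{sec:strategy} of the paper, Gafni uses the Kawada--Wooley bound for $k\ge 4$, with the cases $k=3$, $k=2$, $k=1$ supplied separately by Wooley, Ghosh, and Vinogradov. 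Your identification of the other obstacle --- propagating the pointwise Weyl bound through the infinite product and choosing the Farey width as a power of $\log X$ so that the arc count does not overwhelm the saving --- is exactly the calibration carried out in the paper's Section~\ref{sec:minorarcs} (the choice $Q=(\log X)^A$ with $A$ large).
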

The case $k=1$ had been settled by Vaughan \cite{vaughan} in 2008. The case $k=2$ will be needed to remove the squared primes from our count of duplicated semiprimes.

A very interesting recent development is due to Debryune and Tenenbaum \cite{debruynetenenbaum} by using the saddle-point method for restricted partitions. One can think of the saddle-point method as a coarse version of the circle method, see \cite{gafniprimepowers}, in which only the major arc is the principal arc at the origin. Debryune and Tenenbaum were able to show asympototic formulas for monomials $\partition_{\Nset_k}(n)$, monomials in arithmetic progressions $\partition_{\Nset_{k,a,b}}(n)$, and polynomials $\partition_{\Nset_f}(n)$, thereby encompassing some of the results above. However, as explained by Gafni in \cite{gafniprimepowers} sets like the set of primes, prime powers (or semiprimes) cannot be studied with the technique provided in \cite{debruynetenenbaum}. The reason is that one of the requirements is that the resulting Dirichlet series for the major arcs (see Section \ref{sec:strategy} below) must be meromorphically continued to the half-plane $\real(s) \ge -\varepsilon$ for $\varepsilon>0$. In our case of semiprimes the resulting Dirichlet series is $(\zeta_{\mathcal{P}}(s))^2$ where 
\begin{align} \label{primezetamoebius}
\zeta_{\mathcal{P}}(s) = \sum_{p \in \mathbb{P}} \frac{1}{p^s} = \sum_{n=1}^\infty \frac{\mu(n)}{n} \log \zeta(ns),
\end{align} 
which has essential singularities at every zero of the Riemann zeta-function. In addition, the line $\real(s)=0$ represents a natural boundary as the singularities cluster near all points of this line. Indeed, in \cite{landauwalfisz} Landau and Walfisz showed that $\zeta_{\mathcal{P}}(s)$ cannot be continued beyond the line $\real(s) = 0$ due to the fact that we face a clustering of singular points along the imaginary axis emanating from the non-trivial zeros of $\zeta(s)$ on the critical line. Therefore, although the method put forward in \cite{debruynetenenbaum} is very powerful, it does not cover certain difficult cases and in the next section we propose a method, partially based on \cite{gafniprimepowers}, to deal with sets that yield Dirichlet series with essential singularities and branch cuts. 

We shall also show that by employing tailored versions of the prime number theorem (in our case an asymptotic for $\sum_{p_1 p_2 \le x} 1$) along with Mertens' estimates also provides an alternative approach -- and sometimes quicker way -- of evaluating the principal major arcs. However, this methodology requires a minimum of information on the error of the standard prime number theorem. Using elementary methods to obtain the prime number theorem is sufficient to get the order of magnitude of the main asymptotic term but further refinements from the zero-free region lead to more accurate main and error terms. This provides an implicit way of linking information about the zeros of $\zeta(s)$ and partitions.

Throughout the paper we set the notation $\e(x) = \exp(2 \pi i x)$. We shall also use the convention that $\varepsilon$ denotes an arbitrarily small positive quantity that may not be the same at each occurrence. The logarithmic integral will be defined by the Cauchy principal value
\begin{align*}
\operatorname{li}(x) := \mathop {\lim }\limits_{\varepsilon \to 0+} \bigg(\int_0^{1-\varepsilon}+\int_{1+\varepsilon}^x\bigg) \frac{dt}{\log t} \quad \textnormal{and} \quad \operatorname{Li}(x) := \int_{2}^\infty \frac{dt}{\log t} = \operatorname{li}(x) - \operatorname{li}(2).
\end{align*}
Euler's constant will be denoted by $\gamma = C_0 \approx 0.577216 \ldots$; Fr\"{o}berg's constant will be denoted $\mathcal{D} = \sum_{n \ge 2} \frac{\mu(n)}{n} \log \zeta(n) \approx -0.315718 \ldots$. Finally, Mertens' first constant is $M_1 = M \approx 0.261497 \ldots$ and it comes from $\sum_{p \le x} \frac{1}{p} = \log \log x + M_1 + o(1)$ where the sum is taken over primes. \\

The authors are sincerely grateful to Taylor Daniels from Purdue University for producing the picture of the generating function in Figure \ref{fig:generatingfunction}.
%%%%%%%%%%%%%%%%%%%%%%%%%%%%%%%%%%%%%%%%%%%%%%%%%%%%%%%%%%%%%%%%%%%%%%
\section{Strategies for restricted weighted partitions} \label{sec:strategy}
%%%%%%%%%%%%%%%%%%%%%%%%%%%%%%%%%%%%%%%%%%%%%%%%%%%%%%%%%%%%%%%%%%%%%%
The generating function for partitions with weights into members of a general set $\Nset$ is given by
\begin{align*}
\Psi_{\{\Nset,w\}} (z) := \sum_{n=1}^\infty \partition_{\{\Nset,w\}}(n) z^n = \prod_{n \in \Nset} (1-z^n)^{-w(n)}.
\end{align*}
Here $w(n) \in \N$ is the weight placed on each element $n \in \mathcal{N}$. It is useful to deal with the logarithm of this function
\begin{align} \label{strategy1}
\Phi_{\{\Nset,w\}}(z) := \log \Psi_{\{\Nset,w\}}(z) = \sum_{j=1}^\infty \sum_{n \in \Nset} \frac{w(n)}{j} z^{nj}.
\end{align}
An application of Cauchy's integral formula yields
\begin{align} \label{strategy2}
\mathfrak{p}_{\{\Nset,w\}}(n) = \rho^{-n} \int_{0}^{1} \exp(\Phi_{\{\Nset,w\}}(\rho \e(\alpha))) \e(\alpha) d\alpha = \rho^{-n} \int_{0}^{1} \Psi_{\{\Nset,w\}}(\rho \e(\alpha)) \e(\alpha) d\alpha,
\end{align}
where $\rho < 1$. Therefore our problem at hand consists in evaluating $\Phi_{\{\Nset,w\}}(z)$ as given by the middle side of \eqref{strategy2}, insert into \eqref{strategy1} and compute the resulting integral. Usually when one implements the circle method, the integral in \eqref{strategy2} has to be divided into major and minor arcs. The major arcs will typically contribute to the main term of the asymptotic formula whereas one has to show that the minor arcs are error terms of smaller magnitude. However, when dealing with restricted weighted partitions, the contribution from the major arc at the origin is substantially larger than the contributions from the rest of the major arcs away from the origin \cite{gafniprimepowers, vaughan}. This implies that we split our integral as follows
\begin{align*}
\mathfrak{p}_{\{\Nset,w\}}(n) = \rho^{-n}\bigg(\int_{\mathfrak{M}(1,0)}+\int_{\mathfrak{M} \backslash \mathfrak{M}(1,0)} + \int_{\mathfrak{m}}\bigg) \exp(\Phi_{\{\Nset,w\}}(\rho \e(\alpha))) \e(\alpha) d\alpha.
\end{align*}
The first integral above, representing the major arcs, is treated by the use of contour integration or a by variation of the prime number theorem then followed by the saddle-point method. The major arcs for $\mathfrak{M}(q,a)$ with $q>1$ can be shown to yield a term smaller than $\mathfrak{M}(1,0)$. This means that the main term of the asymptotic $\mathfrak{p}_{\{\Nset,w\}}(n)$ will be dictated only by the first integral when $\alpha$ is close to the origin.

In order to evaluate the main arc at the origin, one needs detailed information about the set $\Nset$ and its corresponding weights $w(n)$. The technique consists in writing the associated Dirichlet function of the set $\Nset$ as a function of known zeta functions (e.g. Riemann, Hurwitz, Dirichlet, Matsumoto-Weng, or prime zeta functions). In other words, we will need to write
\begin{align*}
\zeta_{\{\Nset, w\}}(s) := \sum_{n \in \Nset} \frac{w(n)}{n^s} = f(\zeta(s), \zeta(s,a), \zeta_{\mathcal{P}}(s), L(s,\chi), \cdots),
\end{align*}
for some function $f$ to be determined. For instance, most unrestricted partitions can be mapped to the Riemann zeta-function, whereas unrestricted partitions in arithmetic progressions can be mapped to the Dirichlet $L$-function or to the Hurwitz zeta-function. In the case of prime partitions, which are restrictive, the mapping goes to the prime zeta function. We therefore leverage the analytic properties of the zeta functions including their convergence, the location of their zeros, their analytic continuations, their singularities, as well as residues at the poles. However, in our case of interest, we will be dealing with (products of) prime zeta functions so we will \textit{also} have to account for logarithmic singularities as well as branch cuts.

The non-principal major arcs require an elaborate setup, namely leveraging the distribution of the set $\Nset$ in residue classes and our best weapon in this case is the Siegel-Walfisz theorem \cite{siegel, walfisz}.

Lastly, the minor arcs are by far the most difficult to bound. To get an idea of their contribution one needs specific technology for Weyl sums of the form
\begin{align*}
S_{\{\Nset,w\}}(\beta,y) = \sum_{\substack{n \le y \\ n \in \Nset}} w(n)\e(\beta n) \quad \textnormal{with} \quad \e(x) = \exp(2 \pi i x).
\end{align*}
For instance, in the case of restricted partitions over primes, one will need Vinogradov's bound $\sum_{\substack{p \le y}} \e(\beta p) \ll (yq^{-\frac{1}{2}}+y^{\frac{4}{5}} + y^{\frac{1}{2}} q^{\frac{1}{2}} )(\log 2y)^3$, where the sum is taken over primes, provided that $|\beta - \frac{a}{q}| \le \frac{1}{q^2}$ and $(a,q)=1$. For prime powers, Gafni \cite{gafniprimepowers} resorted to a bound by Kawada and Wooley \cite{kawadawooley}. It is worth remarking that the bound from Kawada and Wooley is given in \cite[Lemma 3.3]{kawadawooley} in dyadic form as
\begin{align*}
\sum_{\substack{P < p < 2P \\ p \in \mathbb{P}}} \e(\alpha p^k) \ll P^{1-2^{-k-1}+\varepsilon} + \frac{q^\varepsilon w_k(q)^{1/2} P(\log P)^4}{(1+P^k |\alpha-a/q|)^{1/2}},
\end{align*}
for $P \ge 2$ and where $w_k(q)$ is defined by
\begin{align*}
w_k(p^{uk+v})=
\begin{cases}
kp^{-u-1/2} & \mbox{ when $u \ge 0$ and $v=1$,}  \nonumber \\
p^{-u-1} & \mbox{ when $u \ge 0$ and $2 \le v \le k$.}
\end{cases}
\end{align*}
This bound is proved for integers $k$ such that $k \ge 4$. However, in \cite{kumchevalmostprimes} Kumchev explains that Wooley settled the case $k=3$ in \cite{wooley}. Moreover, one could use the work from Ghosh \cite{ghosh} to settle the case $k=2$, and lastly the case $k=1$ is due to Vinogradov \cite{vinogradov} and greatly simplified by Vaughan \cite{vaughanidentity}. In our case of semiprimes we will need a more elaborated version of Vinogradov's lemma for $\sum_{\substack{p_1 p_2 \le y}} \e(\beta p_1 p_2)$ and where the sum is taken over primes $p_1$ and $p_2$, namely
\begin{align*}
\sum_{p_1 p_2 \le X} \e (\beta p_1 p_2) \ll \frac{X}{q^{\frac{1}{6}}} (\log X)^{\frac{7}{3}} + X^{\frac{16}{17}} (\log X)^{\frac{39}{17}} + X^{\frac{7}{8}}q^{\frac{1}{8}} (\log X)^{\frac{9}{4}},
\end{align*}
where $\beta$ satisfies the above mentioned Diophantine conditions. We also remark that Lemmas 3.1, 3.2 and 3.3 from \cite{kumchevalmostprimes} do not work as suitably as the bilinear form supplied in Lemma 13.8 from of Iwaniec and Kowalski \cite{iwanieckowalski}. At any rate, the technology to bound the Weyl sums is arguably the most critical part of a successful application of the Hardy-Littlewood circle method.

Summarizing, only by having a satisfactory understanding of these three components for a given set $\Nset$ along with its weights $w(n)$, can we obtain the asymptotics for $\mathfrak{p}_{\{\Nset, w\}}(n)$. As argued in \cite{gafniprimepowers}, it is exceedingly rare to find a set $\Nset$, let alone with weights $w$, for which we have a good picture of these three components. As we shall see in the next sections, the Dirichlet series and its associated contour integral while technically difficult is in fact the most straightforward part, whereas the Siegel-Walfisz theorem will have to be applied many times in intricate ways, alongside the prime number theorem with the standard zero-free region, to get the bound of the non-principal major arcs. Finally, the machinery to deal with our associated Weyl sum will be supplied in Theorem \ref{doublevinogradov}.
%%%%%%%%%%%%%%%%%%%%%%%%%%%%%%%%%%%%%%%%%%%%%%%%%%%%%%%%%%%%%%%%%%%%%%

\section{Generating functions and arcs used}
\label{sec_st_up_arcs}

In this section, we specify the generating functions we are working with as well as the arcs that we shall use.

\subsection{Generating functions}
\label{sec:Gen_function_used}

We will work with the generating function
\begin{align}
\Psi_{\lambda,\mu}(z)
:=
\sum_{n=0}^\infty \partition_{\lambda,\mu}(n) z^n
:=
\exp\left( \Phi_{\lambda,\mu}(z)\right)
\label{eq:def_Psi_lambda_mu}
\end{align}
with $\lambda,\mu\in \R$ and $\Phi_{\lambda,\mu}(z) =\lambda\Phi_{\mathbb{P}_2}(z)  + \mu \Phi_{\mathbb{P}^2} (z)$ and where
\begin{align*}
\Phi_{\mathbb{P}_2} (z)
:= 
\sum_{j=1}^\infty \frac{1}{j} \sum_{p_1 \in \Pri} \sum_{p_2 \in \Pri} z^{(p_1 p_2)j}
\quad \text{and} \quad
\Phi_{\mathbb{P}^2} (z)
:= 
\sum_{j=1}^\infty \frac{1}{j} \sum_{p \in \Pri} z^{p^2j}.
\end{align*}
The function $\Psi_{\lambda,\mu}(z)$ in \eqref{eq:def_Psi_lambda_mu} covers all generating functions we require for the theorems in Section~\ref{sec:introduction}. Taking $\lambda = \mu = \frac{1}{2}$ leads to $\mathbb{P}_2$, taking $\lambda = \frac{1}{2}$ and $\mu = -\frac{1}{2}$ yields $\mathbb{P}_2^{\ne}$, and setting $\mu = 1$ and $\lambda = 0$ gives $\mathbb{P}_2^\sharp$.
In other words, we have
\begin{align}
\sum_{n=0}^\infty \partition_{\mathbb{P}_2}(n) z^n
=
\Psi_{\frac{1}{2},\frac{1}{2}}(z), \quad
\sum_{n=0}^\infty \partition_{\mathbb{P}_2^{\neq}}(n) z^n
=
\Psi_{\frac{1}{2},-\frac{1}{2}}(z), 
\quad \text{and} \quad
\sum_{n=0}^\infty \partition_{\mathbb{P}_2^\sharp}(n)z^n
=
\Psi_{1,0}(z).
\label{eq:generating_fkt_partition}
\end{align}	
Furthermore, $\Phi_{\mathbb{P}^2} (z)$ has been studied in detail in \cite{gafniprimepowers}. 
Thus we have to study $\Phi_{\mathbb{P}_2} (z)$ only to obtain the theorems in Section~\ref{sec:introduction}.
\begin{remark}
In order to connect $\Phi_{\mathbb{P}_2^\sharp}(z)$ with a weighted set, we could take
\begin{align*}
\Phi_{\{\mathbb{P}_2,w\}} (z) = \sum_{j=1}^\infty \sum_{m \in \mathbb{P}_2} \frac{w(m)}{j}z^{mj}.
\end{align*}
The weight function will be $w(n)=w_2(n)$ with
\begin{align*} %\label{weightsalmostprimes}
w_2(n) = \begin{cases}
2, \quad &\mbox{if $n$ is a product of two distinct primes}, \\
1, \quad &\mbox{if $n$ is the square of a prime,} \\
0, \quad &\mbox{otherwise.} 
\end{cases}
\end{align*}
In other words, $w_2(n)$ is the number of representations of $n$ in the form $p_1p_2$ where $p_1$ and $p_2$ are primes. Lastly, we define $\mathbf{1}_{\mathbb{P}_2}(n)$ be the indicator function of semiprimes.
\end{remark}
\subsection{Set up of the arcs}
Applying Cauchy's theorem to the generating function in \eqref{eq:generating_fkt_partition}, we get
\begin{align} \label{integralarcs}
\partition_{\lambda,\mu}(n)
&= 
\rho^{-n} \int_{-1/2}^{1/2} \Psi_{\lambda,\mu}(\rho \e(\alpha)) \e(-n \alpha) d\alpha \nonumber\\
&= 
\rho^{-n} \int_{-1/2}^{1/2} \exp\left(\lambda\Phi_{\mathbb{P}_2}(\rho \e(\alpha))+\mu\Phi_{\mathbb{P}^2}(\rho \e(\alpha))\right) \e(-n \alpha) d\alpha.
\end{align}
%Similar for $\partition_{\Oset(2)}(n)$ and $\partition_{\mathbb{P}_2^\sharp}$.
In order to prove our main theorems, we have to study the behaviour of $\Psi_{\lambda,\mu} (z)$ near the boundary of the unit disc.
An illustration of the values of $\Psi_{\lambda,\mu} (z)$ in the unit disc can be found in Figure~\ref{fig:illustration_Psi}.
We can see in this figure the largest values of $\Psi_{\lambda,\mu} (z)$ are near the point $1$, 
but also that $\Psi_{\lambda,\mu} (z)$ is large near $\e(a/q)$ with $a\in\Z$, $q\in\N$ and $q$ small.
Thus we have to carefully  split the integral in \eqref{strategy2}.
We define the major and minor arcs as follows. For real $A>18$ we set
\begin{align} \label{eq:def_delta_q}
\delta_q = q^{-1}X^{-1}(\log X)^A \quad \textnormal{and} \quad Q = (\log X)^A.
\end{align}
Moreover, for $1 \le a \le q \le Q$ with $(a,q)=1$ we define
\begin{align}
\mathfrak{M}(q,a) = \bigg(\frac{a}{q} - \delta_q, \frac{a}{q} + \delta_q\bigg).
\label{eq:def_M(q,a)}
\end{align}
The major $\mathfrak{M}$ and $\mathfrak{m}$ arcs will be defined by the following
\begin{align*}
\mathfrak{M} = \bigcup_{\substack{1 \le a \le q \le Q \\ (a,q)=1}} \mathfrak{M}(q,a) \quad \textnormal{and} \quad \mathfrak{m} = [-1/2,1/2) \backslash \mathfrak{M}.
\end{align*}
Next, in accordance to the strategy outlined in Section \ref{sec:strategy}, we divide the integral in \eqref{integralarcs} into three pieces:
\begin{itemize}
\item the principal major arc $\mathfrak{M}(1,0)$,
\item the non-principal major arcs $\mathfrak{M}(q,a)$ with $q>1$,
\item and the minor arcs $\mathfrak{m}$.
\end{itemize}
The main contribution to the integral comes from $\Phi_{\mathbb{P}_2}(z)$. 
Thus we do not have to take into account $\Phi_{\mathbb{P}^2}(z)$ for the choice of the arcs.
In fact, we bound the term $\Phi_{\mathbb{P}^2}(z)$ on the non-principal arcs trivially.
\begin{figure}[H] 
\includegraphics[scale=0.36]{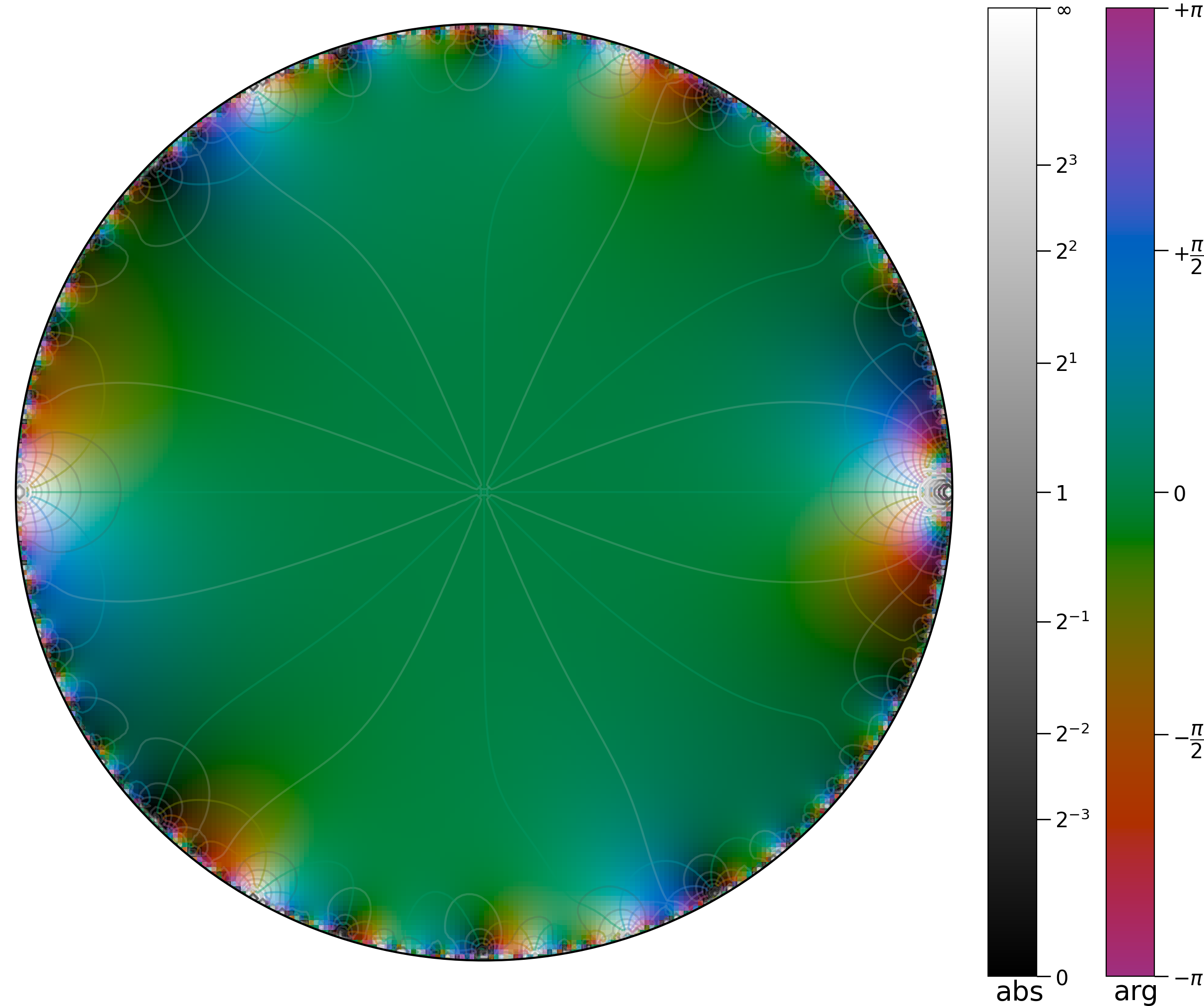} %0.54
\caption{Domain plot of a truncation of $\Phi(z)$ given by $\Phi_M(z)= \prod_{n=1}^M (\frac{1}{1-z^n})^{f(n)}$ where $f(n):= \mathbf{1}_{\mathbb{P}_2}(n)$ with $M=10^4$ and absolute value and argument hue.}
\label{fig:illustration_Psi}
\label{fig:generatingfunction}
\end{figure}
The main term will be dictated entirely by the principal major arcs and this main term will be extracted in Sections \ref{sec:principalarcs} and \ref{sec:mainresult}. The bound for the denominators in the major arcs is limited by the scope of the Siegel-Walfisz theorem, see Section \ref{sec:nonprincipalarcs}. 
We have chosen the exponent of $\log X$ to be greater than 18 in order to yield a satisfactory bound on the minor arcs. 
We shall  show the details of the derivation in Section \ref{sec:minorarcs}.
%
%%%%%%%%%%%%%%%%%%%%%%%%%%%%%%%%%%%%%%%%%%%%%%%%%%%%%%%%%%%%%%%%%%%%%%
\section{The principal major arcs} \label{sec:principalarcs}
As discussed in Section \ref{sec:introduction} one can compute the main term arising from the principal major arc $\mathfrak{M}(1,0)$ with the traditional method of contour integration or setting up an appropriate prime number theorem with a satisfactory error term.
\begin{theorem} \label{maintermstheorem}
Let $\rho = e^{-1/X}$. Then for any $m \in \Z_{\ge 0}$, we have
\begin{align} \label{maintermtheorem1}
\bigg(\rho \frac{d}{d\rho} \bigg)^m \Phi_{\mathbb{P}_2}(\rho) 
&= 
2\frac{\zeta(2) \Gamma(m+1) X^{m+1}}{\log X}(M + \log \log X ) \bigg(1+O\bigg(\frac{1}{\log X}\bigg)\bigg) 
\end{align}
as well as
\begin{align} \label{maintermtheorem2}
\Phi_{\mathbb{P}_2}^{(m)}(\rho) = 2\frac{\zeta(2) \Gamma(m+1) X^{m+1}}{\log X}(M + \log \log X ) \bigg(1+O\bigg(\frac{1}{\log X}\bigg)\bigg)
\end{align}
as $\rho \to 1^-$. Here $M$ is the Meissel-Mertens constant $M = \gamma + \sum_p [\log(1-\frac{1}{p})+\frac{1}{p}] \approx 0.26149721 \ldots$
\end{theorem}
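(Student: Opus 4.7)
The plan is to represent $\left(\rho\frac{d}{d\rho}\right)^m\Phi_{\mathbb{P}_2}(\rho)$ as a Mellin--Barnes contour integral and then extract the leading asymptotic by collapsing onto a Hankel contour around the rightmost singularity at $s=1$. Starting from
\[
\left(\rho\tfrac{d}{d\rho}\right)^m\Phi_{\mathbb{P}_2}(\rho)=\sum_{j\ge1}\sum_{p_1,p_2\in\mathbb{P}}(p_1p_2)^m\, j^{m-1}\,e^{-jp_1p_2/X},
\]
the Cahen--Mellin identity $e^{-u}=\frac{1}{2\pi i}\int_{(c)}\Gamma(s)u^{-s}\,ds$, a swap of sum and integral in the half-plane $\real(s)>m+1$ of absolute convergence, and the shift $s\mapsto s+m$ give
\[
\left(\rho\tfrac{d}{d\rho}\right)^m\Phi_{\mathbb{P}_2}(\rho)=\frac{1}{2\pi i}\int_{(c)}\Gamma(s+m)X^{s+m}\zeta(s+1)\zeta_{\mathcal{P}}(s)^{2}\,ds,\qquad c>1.
\]

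The rightmost singularity of the integrand is the logarithmic branch point of $\zeta_{\mathcal{P}}(s)^2$ at $s=1$. From $\log\zeta(s)=-\log(s-1)+O(s-1)$ together with the absolute convergence of $\sum_{k\ge2}\frac{\mu(k)}{k}\log\zeta(ks)$ at $s=1$, one gets $\zeta_{\mathcal{P}}(s)=-\log(s-1)+\mathcal{D}+O(s-1)$. After the change of variable $s=1+t/\log X$ and Taylor expansion of the smooth factors at $s=1$, the Hankel-contour piece around $s=1$ contributes, to leading order,
\[
\frac{\Gamma(m+1)\zeta(2)X^{m+1}}{\log X}\cdot\frac{1}{2\pi i}\int_H e^t(B-\log t)^{2}\,dt,\qquad B=\log\log X+\mathcal{D}.
\]
Differentiating Hankel's identity $\frac{1}{\Gamma(\sigma)}=\frac{1}{2\pi i}\int_H e^t t^{-\sigma}\,dt$ once and twice at $\sigma=0$ gives $\frac{1}{2\pi i}\int_H e^t\,dt=0$, $\frac{1}{2\pi i}\int_H e^t\log t\,dt=-1$ and $\frac{1}{2\pi i}\int_H e^t(\log t)^2\,dt=2\gamma$, so the bracket collapses to $2(B+\gamma)$. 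Möbius-inverting $\log\zeta(N)=\sum_{k\ge1}\zeta_{\mathcal{P}}(kN)/k$ produces $\mathcal{D}=-\sum_{N\ge2}\zeta_{\mathcal{P}}(N)/N$, while expanding $\log(1-1/p)+1/p$ in powers of $1/p$ yields $M=\gamma-\sum_{k\ge2}\zeta_{\mathcal{P}}(k)/k$; combining these identifies $\mathcal{D}+\gamma=M$ and converts the main term into exactly the form asserted in \eqref{maintermtheorem1}.

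The relative error $O(1/\log X)$ collects the Taylor remainders in $\Gamma(s+m)$, $\zeta(s+1)$ and $e^{(s-1)\log X}$, together with the $O(s-1)$ correction in $\zeta_{\mathcal{P}}(s)^2$; each produces an $O(|t|/\log X)$ factor on $H$ that integrates against $e^t(\log t)^j$ without trouble. The main technical obstacle is justifying the contour deformation itself, because $\zeta_{\mathcal{P}}$ has a natural boundary on $\real(s)=0$ and inherits logarithmic singularities from every nontrivial zero of $\zeta(s)$, so the contour cannot be pushed arbitrarily to the left. The remedy is to truncate at some height $|\imag(s)|\le T$, deform the vertical piece only to $\real(s)=1-c_0/\log X$ for a small $c_0>0$, and use the classical zero-free region to bound $|\zeta_{\mathcal{P}}(s)|\ll\log\log(|\imag(s)|+2)$ on this shifted line; combined with the factor $X^{\real(s)}=X^{m+1}e^{-c_0}$ and the rapid decay of $\Gamma(s+m)$ in $|\imag(s)|$, this bounds the off-Hankel contribution by $O(X^{m+1}/(\log X)^{A})$ for any $A$, safely absorbed into the error.

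Finally, the ordinary-derivative statement \eqref{maintermtheorem2} follows from the Stirling-number identity $\rho^m\Phi_{\mathbb{P}_2}^{(m)}(\rho)=\sum_{k=0}^{m}s(m,k)\left(\rho\tfrac{d}{d\rho}\right)^k\Phi_{\mathbb{P}_2}(\rho)$: the top term $k=m$ supplies the claimed asymptotic up to the harmless factor $\rho^{-m}=1+O(m/X)$, while the lower terms $k<m$ have size $O(X^{k+1}\log\log X/\log X)=O(X^{m}\log\log X/\log X)$ and are absorbed into the error.
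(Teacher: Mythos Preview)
Your overall strategy matches the paper's: express the quantity as a Mellin integral, collapse onto the logarithmic singularity of $\zeta_{\mathcal{P}}(s)^2$ at $s=1$, and read off the leading behaviour. Your extraction of the main term via the Hankel identities and the identification $\mathcal{D}+\gamma=M$ are correct, and the passage from \eqref{maintermtheorem1} to \eqref{maintermtheorem2} via Stirling numbers is fine.

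There is, however, a genuine gap in the error analysis for the contour deformation. You shift the vertical segment to $\real(s)=1-c_0/\log X$ and correctly compute $X^{\real(s)+m}=X^{m+1}e^{-c_0}$, but then assert that this bounds the off-Hankel contribution by $O(X^{m+1}/(\log X)^A)$ for any $A$. That does not follow: $e^{-c_0}$ is merely a fixed constant, so the vertical line still contributes $O(X^{m+1})$, which is \emph{larger} than the main term $X^{m+1}\log\log X/\log X$. Equivalently, in the variable $t=(s-1)\log X$ your keyhole occupies only $\real(t)\in[-c_0,0]$, nowhere near a full Hankel contour, so the identities $\frac{1}{2\pi i}\int_H e^t(\log t)^j\,dt=\ldots$ cannot yet be invoked without an $O(1)$ error that swamps the constant $M$.

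The fix is to choose the shift $\eta$ so that simultaneously $\eta\log X\to\infty$ (making the truncated Hankel essentially full) and $\eta\ll 1/\log T$ (keeping $\real(s)=1-\eta$ inside the zero-free region up to height $T$). The paper takes $T=\exp(\sqrt{\log X})$ and $\eta=c/\log T=c/\sqrt{\log X}$; then $X^{m+1-\eta}=X^{m+1}\exp(-c\sqrt{\log X})$ genuinely beats every power of $\log X$, and the Hankel tail beyond $\real(t)=-c\sqrt{\log X}$ is likewise $O(e^{-c\sqrt{\log X}})$. With this correction your argument goes through. The paper itself computes the branch-cut jump of $(\log\zeta)^2$ directly rather than invoking the Hankel identities, after first splitting off the piece $\mathcal{D}(s)=\log\zeta(s)-\zeta_{\mathcal{P}}(s)$ that is analytic for $\real(s)>\tfrac12$; this is a cosmetic difference from your route, not a substantive one.
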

%%%%%%%%%%%%%%%%%%%%%%%%%%%%%%%%%%%%%%%%%%%%%%%%%%%%%%%%%%%%%%%%%%%%%%
\subsection{The method of contour integration}
The first method of proof is shown here. 
\begin{proof}[Proof of Theorem \eqref{maintermstheorem}]
Recall that we have
\begin{align*}
\Phi_{\mathbb{P}_2}(\rho) = \sum_{j=1}^\infty \frac{1}{j} \sum_{p_1} \sum_{p_2} \rho^{p_1 p_2 j}. 
\end{align*}
Inserting the definition of $\rho$ yields
\begin{align*}
\bigg(\rho \frac{d}{d\rho}\bigg)^m \Phi_{\mathbb{P}_2}(\rho) = \sum_{j=1}^\infty j^{m-1} \sum_{p_1} p_1^m \sum_{p_2} p_2^m e^{-p_1 p_2 j /X}.
\end{align*}
We now employ the Mellin transform of the Gamma function so that
\begin{align} \label{auxintegral}
\bigg(\rho \frac{d}{d\rho}\bigg)^m \Phi_{\mathbb{P}_2}(\rho) &= \frac{1}{2 \pi i} \int_{(c)} X^s \bigg(\sum_{p_1} \frac{1}{p_1^{s-m}}\bigg) \bigg(\sum_{p_2} \frac{1}{p_2^{s-m}}\bigg) \bigg(\sum_{j=1}^\infty \frac{1}{j^{s+1-m}} \bigg) \Gamma(s) ds \nonumber \\
&= \frac{1}{2 \pi i} \int_{(c)} X^s (\zeta_{\mathcal{P}}(s-m))^2 \zeta(s+1-m) \Gamma(s) ds, 
\end{align}
since $c > m+1$ and where $\zeta_{\mathcal{P}}(s)$ is the prime zeta function defined in \eqref{primezetamoebius} for $\real(s)>1$. The next step is to note that $\zeta_{\mathcal{P}}(s) = \log \zeta(s) - \mathcal{D}(s)$ where
\begin{align*}
\mathcal{D}(s) = \sum_{j \ge 2}\frac{1}{j} \sum_p \frac{1}{p^{js}}.
\end{align*}
For any $\delta > 0$ we have that $\mathcal{D}(s)$ converges absolutely and uniformly for $\real(s) \ge \frac{1}{2} + \delta$. If we make the replacement $(\zeta_{\mathcal{P}}(s-m))^2 = (\log \zeta(s-m))^2 - 2 \mathcal{D}(s-m) \log \zeta(s-m) + (\mathcal{D}(s-m))^2$ in \eqref{auxintegral}, then we could move the contour of integration to the line $\real(s) = c_0$ for any $c_0 > m + \tfrac{1}{2}$. The task at hand is therefore to compute the contribution from these three parts. We start with the first, and most difficult, case, that of $(\log \zeta(s-m))^2$, i.e.
\begin{align} \label{squareintegral}
\Omega(m, X) := \frac{1}{2 \pi i} \int_{(c)} X^s (\log \zeta(s-m))^2 \zeta(s+1-m) \Gamma(s) ds .
\end{align}
We begin by noting that the integral is analytic in the zero-free region for $\zeta(s-m)$ except for a logarithmic singularity at $s=m+1$. If we choose $T = \exp (\sqrt{\log X})$, then the integral can be truncated at height $T$ with an acceptable error term. The remaining part of the integral can be shifted to the left of the line $\real(s) = m + 1-\tfrac{c}{\log T}$ where $c$ is a suitable positive constant, except for a keyhole contour around the essential singularity at $s=m+1$. This keyhole contour runs counter clockwise along the top and the bottom of the branch cut located at $\{s= \sigma \, : \, \sigma \le m+1\}$, as in Figure \ref{fig:integrationcontour}.
Setting $\Xi$ to denote this contour and since the integrand is analytic in $\Xi$ we have, by Cauchy's theorem, that
\begin{align} \label{cauchy}
0 &= \frac{1}{2 \pi i} \oint_{\Xi} X^s (\log \zeta(s-m))^2 \zeta(s+1-m) \Gamma(s) ds \nonumber \\
&= \frac{1}{2 \pi i}\bigg(\int_{\Xi_1} + \int_{\Xi_2} + \cdots + \int_{\Xi_8}\bigg) X^s (\log \zeta(s-m))^2 \zeta(s+1-m) \Gamma(s) ds.
\end{align}
Ignoring for the moment the top and bottom parts, all the remaining segments and curves of the contour are well-controlled and will only contribute to the error term.
\begin{figure}[H]
\includegraphics[scale=0.293]{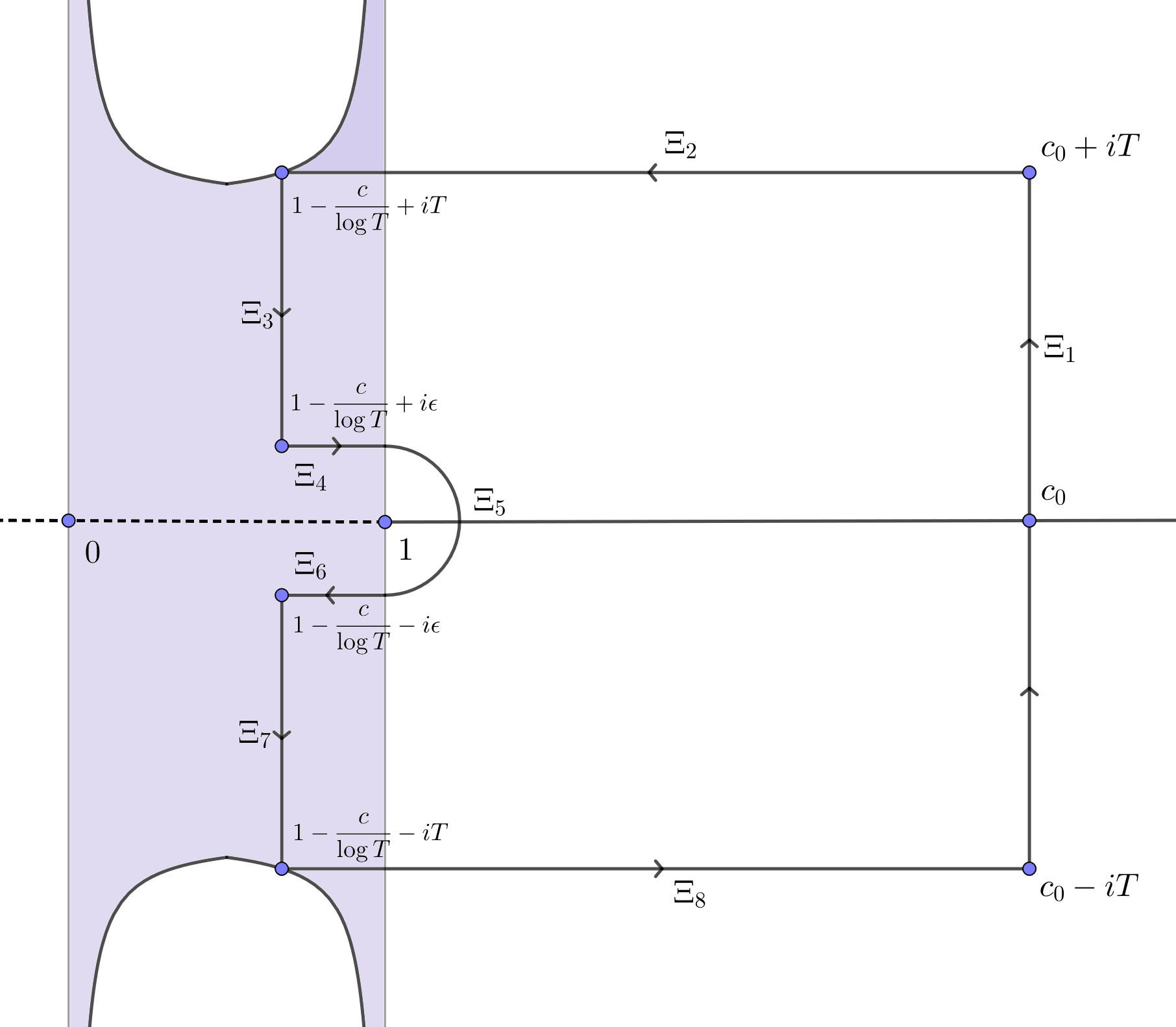}
\caption{The contour of integration $\Xi$ alongside the zero-free region of $\zeta(s)$ in blue.}\label{fig:integrationcontour}
\end{figure}
Let us now concentrate on the top part of the branch cut which is given by 
\begin{align*}
\Omega_{\textnormal{top}}(m, X) = \frac{1}{2 \pi i}\int_{m+1-\tfrac{c}{\log T}}^{m+1} X^s (\log \zeta(s-m))^2 \zeta(s+1-m) \Gamma(s) ds.
\end{align*}
We make the change of variables $s=m+1-u$ and realize that on the top branch of the cut we are dealing with $u \to u + i \varepsilon$ for $\varepsilon >0$. In this case, our integral becomes 
\begin{align*}
\frac{1}{2 \pi i}\int_{0}^{\tfrac{c}{\log T}} X^{m+1-(u+i \varepsilon)} (\log \zeta(1-(u+i\varepsilon)))^2 \zeta(2 - (u+i \varepsilon)) \Gamma(m+1 - (u+i \varepsilon)) du.
\end{align*}
If we use the identity
\begin{align*}
\log \zeta(s) = -\log (s-1) + g(s) \quad \textnormal{where} \quad g(s) = \log ((s-1) \zeta(s)),
\end{align*}
as well as $\log(a+ib) = \tfrac{1}{2}\log(a^2+b^2)+ i \theta$ where $a+ib = \sqrt{a^2+b^2}e^{i \theta}$ for $a,b \in \R$, then 
the above becomes
\begin{align*}
\frac{1}{2 \pi i}\int_{0}^{\tfrac{c}{\log T}} X^{m+1-(u+i \varepsilon)} &\bigg(-\frac{1}{2} \log(u^2 + \varepsilon^2) + i \theta + g (1-(u+i \varepsilon)) \bigg)^2 \nonumber \\
& \times \zeta(2 - (u+i \varepsilon)) \Gamma(m+1 - (u+i \varepsilon)) du.
\end{align*}
Letting $\varepsilon \to 0$ and realizing that on the top of the branch we have $\theta = \pi$ leads us to 
\begin{align*}
\Omega_{\textnormal{top}}(m, X) = \frac{1}{2 \pi i}\int_{0}^{\tfrac{c}{\log T}} X^{m+1-u} (- \log u + i \pi + g (1-u))^2 \zeta(2 - u) \Gamma(m+1 - u) du.
\end{align*}
A similar reasoning on the bottom part of the branch cut, where $\theta = -\pi$, shows that 
\begin{align*}
\Omega_{\textnormal{bottom}}(m, X) = \frac{1}{2 \pi i}\int_{0}^{\tfrac{c}{\log T}} X^{m+1-u} (- \log u - i \pi + g (1-u))^2 \zeta(2 - u) \Gamma(m+1 - u) du.
\end{align*}
Therefore the contribution along the two segments of the branch cut is given by 
\begin{align*}
\Omega(m, X) = \frac{1}{2 \pi i}\int_{0}^{\tfrac{c}{\log T}} X^{m+1-u} \vartheta(u) \zeta(2 - u) \Gamma(m+1 - u) du,
\end{align*}
where we have defined 
\begin{align*}
\vartheta(u) = (- \log u + i \pi + g (1-u))^2 - (- \log u - i \pi + g (1-u))^2 = 4 \pi i (-\log u + g(1-u)).
\end{align*}
Inserting this into our integral of interest yields
\begin{align*}
\Omega(m, X) = \frac{1}{2 \pi i} \int_0^{\tfrac{c}{\log T}} 4 \pi i (-\log u + g(1-u)) X^{m+1-u}\zeta(2-u)\Gamma(m+1-u) du.
\end{align*}
For any $m$ we have that 
\begin{align*}
\zeta(2-u) \Gamma(m+1-u) = \zeta(2) \Gamma(m+1) + O(u) \quad \textnormal{and} \quad g(1-u)=-\gamma u +O(u)^2
\end{align*}
uniformly for $u \in [0,1/2]$ and where $\gamma$ is the Euler constant. Thus the contribution from the cuts is given by
\begin{align*}
\Omega(m, X) = 2 \zeta(2) \Gamma(m+1) X^{m+1}\bigg(-\int_0^{\tfrac{c}{\log T}} X^{-u} \log u du + O \bigg(X^{m+1} \int_0^{\frac{c}{\log T}}X^{-u}udu \bigg)\bigg).
\end{align*}
To compute this integral we invoke the formula
\begin{align*}
\int_0^{c/\log T} X^{-u} \log u du = -\frac{\gamma + \log \log X + \Gamma(0,\frac{c \log X}{\log T}) + X^{-\frac{c}{\log T}} \log \frac{c}{\log T}}{\log X},
\end{align*}
provided that $\real(\log X)>0$ and where $\Gamma(a,z) := \int_z^\infty t^{a-1 }e^{-t} dt$ is the incomplete gamma function. This can be shown by considering (see e.g. \cite[Equation (6.2)]{przz} or \cite[$\mathsection$3.3)]{krz})
\begin{align*}
I(a,b) := \int_0^a b^{-u}\log u du \quad \textnormal{and} \quad \log x = - \frac{1}{2\pi i} \oint \frac{1}{x^z}\frac{dz}{z^2},
\end{align*}
where the integral is taken over a small circle around the origin. We use this to swap the order of integration so that
\begin{align*}
I(a,b) &= - \int_0^a b^{-u} \frac{1}{2\pi i}\oint \frac{1}{u^z} \frac{dz}{z^2}du =- \frac{1}{2\pi i}\oint\int_0^a b^{-u} \frac{du}{u^z} \frac{dz}{z^2} \nonumber \\
&= - \frac{1}{2\pi i}\oint \frac{a^{-z}}{\log b}(-a \operatorname{E}(z,a \log b)\log b+\Gamma(1-z)(a\log b)^z) \frac{dz}{z^2} \nonumber \\
&= -\frac{1}{\log b}(\gamma+ \Gamma(0,a\log b)+b^{-a}\log a + \log \log b),
\end{align*}
by a direct residue calculus computation and where $\operatorname{E}(n,z):= \int_1^\infty e^{-zt}/t^n dt$ is the exponential integral function. Thus we arrive at
\begin{align} \label{omegao1}
\Omega(m, X) &= 2 \zeta(2) \Gamma(m+1) X^{m+1} \bigg(\frac{\gamma + \log \log X + \Gamma(0,\frac{c \log X}{\log T}) + X^{-\frac{c}{\log T}} \log \frac{c}{\log T}}{\log X}\bigg) \nonumber \\
& \quad + O\bigg(X^{m+1} \frac{1-X^{-\frac{c}{\log T}}(1+\frac{c \log X}{\log T})}{\log^2 X}\bigg) \nonumber \\
&= 2 \zeta(2) \Gamma(m+1) X^{m+1} \bigg(\frac{\gamma + \log \log X}{\log X} + \frac{\Gamma(0, c \sqrt{ \log X})}{\log X} + \frac{ e^{- \sqrt{\log X}} \log \frac{c}{\sqrt{\log X}}}{\log X}\bigg) \nonumber \\
& \quad + O\bigg(X^{m+1} \frac{1-e^{-c \sqrt{\log X}}(1+ c \sqrt{\log X})}{\log^2 X}\bigg) 
\end{align}
since $T= \exp(\sqrt{\log X})$. Employing \eqref{cauchy} we end up with
\begin{align*}
\Omega(m, X) = \frac{2 \zeta(2) \Gamma(m+1) X^{m+1} (\gamma+\log \log X)}{\log X} \bigg( 1 + O \bigg(\frac{1}{\log X}\bigg)\bigg),
\end{align*}
where we have used that
\begin{align*}
\Gamma(0, c \sqrt{\log X}) = e^{-c \sqrt{\log X} + O(\frac{1}{X})} \bigg(\frac{-1+c \sqrt{\log X}}{c^2 \log X} + O\bigg(\frac{1}{X}\bigg)\bigg)
\end{align*}
as $X\to \infty$ for $c>0$. We now need to compute the piece coming from $-2 \mathcal{D}(s-m) \log \zeta(s-m)$ which is given by
\begin{align} \label{squareintegralD}
\tilde\Omega(m, X) := \frac{1}{2 \pi i} \int_{(c)} X^s (-2\mathcal{D}(s-m)\log \zeta(s-m)) \zeta(s+1-m) \Gamma(s) ds .
\end{align}
The technique is nearly identical to the one we just described except with $\tilde\vartheta(u)= (- \log u + i \pi + g (1-u)) - (- \log u - i \pi + g (1-u)) = 2 \pi i $ and we will end up having
\begin{align*}
\tilde\Omega(m, X) &= -2\frac{1}{2 \pi i}\int_{0}^{\tfrac{c}{\log T}} \tilde\vartheta(u) X^{m+1-u} \mathcal{D}(1-u) \zeta(2 - u) \Gamma(m+1 - u) du \nonumber \\
&= -\frac{2 \mathcal{D}(1) \zeta(2) \Gamma(m+1) X^{m+1}}{\log X}\bigg( 1 + O \bigg(\frac{1}{\log X}\bigg)\bigg),
\end{align*}
where $\mathcal{D}(1) = \mathcal{D}$ with $\mathcal{D} = \sum_{j \ge 2} \frac{1}{j} \sum_p \frac{1}{p^j} =  -\sum_{n=2}^\infty \frac{\mu(n)}{n} \log \zeta(n) \approx 0.315718452 \ldots$ being Fr\"{o}berg's constant \cite{froeberg}. One can show that $\gamma+\mathcal{D} = M$ in \cite[$\mathsection$6]{nathanson}. Lastly, the contribution involving the $(\mathcal{D}(s-m))^2$ piece is
\begin{align} \label{boundD2}
\tilde{\tilde\Omega}(m, X) := \frac{1}{2 \pi i} \int_{(c_0)} X^s (\mathcal{D}(s-m))^2 \zeta(s+1-m) \Gamma(s) ds \ll X^{c_0}.
\end{align}
We now choose $c_0 = m + \frac{3}{4}$, say, in \eqref{boundD2} so that \eqref{maintermtheorem1} follows. We can close the statement of the lemma by a standard argument. First the case $m=0$ is immediate. We can use induction on $m$ to write
\begin{align*}
\rho^m \Phi_{\mathbb{P}_2}^{(m)}(\rho) = \sum_{i=1}^m c_{i,m} \bigg(\rho \frac{d}{d\rho}\bigg)^i \Phi_{\mathbb{P}_2}(\rho),
\end{align*}
where $c_{i,m}$ are reals with $c_{m,m}=1$. The last step is to use the fact that $\rho = 1 +O(X^{-1})$ and the result now follows.
\end{proof}
\subsection{Moment method for the major arcs}
We now show that this main term can be obtained by the method of moments and a tailored prime number theorem. We choose to work with the weakest error term of the prime number theorem, i.e. knowing that $\zeta(s) \ne 0$ for $\real(s)=1$ in order to showcase the fewer resources needed to partially achieve our goal. The proof we present can be adapted to include a more refined error term.

\begin{lemma}
\label{lem:eq:integral_with_gamma3}
Let $L>1$, $\lambda>0$, $a, b\geq 0$ and $M \geq 0$ be given. One has that
\begin{align}
\int_{L}^\infty e^{-at} t^{\lambda}\frac{\log\log t +M }{(\log t)^b} \, dt
=
\frac{\Gamma(\lambda+1)(\log\log(1/a)+M)}{a^{\lambda+1}(\log (1/a))^b}\left(1 + O\left(\frac{\log\log(1/a)}{\log(1/a)}\right)\right),
\label{eq:integral_with_gamma3}
\end{align}
as $a \to 0^+$.
\end{lemma}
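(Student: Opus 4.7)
The plan is to evaluate the integral via a change of variables that concentrates the mass of the integrand on a bounded scale, and then read off the asymptotic by a Laplace-type analysis. Set $y := \log(1/a)$ and substitute $t = \exp(w)/a$, so that $\log t = y+w$, $\log\log t = \log(y+w)$, $at = \exp(w)$, and $t^\lambda\,dt = a^{-\lambda-1}\exp((\lambda+1)w)\,dw$. The integral then transforms into
\begin{align*}
a^{\lambda+1}\int_{L}^\infty \exp(-at)\,t^\lambda\,\frac{\log\log t + M}{(\log t)^b}\,dt
= \int_{\log L - y}^\infty K(w)\,\frac{\log(y+w) + M}{(y+w)^b}\,dw,
\end{align*}
where $K(w) := \exp(-\exp(w) + (\lambda+1)w)$. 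The kernel $K$ is peaked near $w = \log(\lambda+1)$, decays like $\exp((\lambda+1)w)$ as $w\to-\infty$, and doubly-exponentially as $w\to +\infty$, so essentially all of its mass lies in a bounded window independent of $a$.

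Next I would restrict the range of integration to $|w|\leq y/2$. The complement contributes an amount exponentially small in $y$: the right tail $w > y/2$ is negligible thanks to the doubly-exponential decay of $K$, while the left tail is bounded by $\int_{-\infty}^{-y/2}\exp((\lambda+1)w)\,dw = O(\exp(-(\lambda+1)y/2))$. On $|w|\leq y/2$, the ratio $w/y$ lies in $[-1/2, 1/2]$, so the Taylor expansions of $\log(1+w/y)$ and $(1+w/y)^{-b}$ combine to give
\begin{align*}
\frac{\log(y+w)+M}{(y+w)^b} = \frac{\log y + M}{y^b}\left(1 + O(|w|/y)\right).
\end{align*}

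Substituting this back and extending the limits to all of $\mathbb{R}$ at exponentially small cost yields the main term
\begin{align*}
\frac{\log y + M}{y^b}\int_{-\infty}^\infty K(w)\,dw,
\end{align*}
which the substitution $u = \exp(w)$ evaluates to $\int_0^\infty \exp(-u)u^\lambda\,du = \Gamma(\lambda+1)$. The error from the Taylor step is controlled by $\frac{\log y + M}{y^{b+1}}\int_{-\infty}^\infty |w|\,K(w)\,dw$, a finite constant multiple of $(\log y + M)/y^{b+1}$, producing a relative error of $O(1/y) = O(1/\log(1/a))$, comfortably stronger than the stated $O(\log\log(1/a)/\log(1/a))$.

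The hard part will be the careful bookkeeping of these error contributions: verifying that the lower endpoint $w = \log L - y$ does not spoil the extension to $\mathbb{R}$ (handled by the left-tail bound above), confirming that the Taylor expansion of $\log(y+w)/(y+w)^b$ is uniform on $|w|\leq y/2$, and ensuring the additive constant $M$ is compatible with the expansion, since it only shifts $\log y$ in the main term without interacting with the expansion in $w/y$. Once these uniformities are in place, reassembling the main term with the error estimate yields the claimed asymptotic.
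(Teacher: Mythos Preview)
Your approach is correct. The paper defers the proof to that of Lemma~\ref{lem:dirk'slemma}, which works directly in the $t$-variable: it splits $[L,\infty)$ at cutoffs $d = a^{-1}(\log(1/a))^{-(1+b)}$ and $u = a^{-1}(\log(1/a))^{1+1/\lambda}$, bounds the two tails crudely, and on $[d,u]$ replaces the slowly-varying factor $(\log\log t + M)/(\log t)^b$ by its value at $t=1/a$. Your substitution $t=e^w/a$ is a cleaner execution of the same idea: the kernel $K(w)=\exp(-e^w+(\lambda+1)w)$ is independent of $a$, so the localization is uniform, and because $\int_{\mathbb R}|w|K(w)\,dw<\infty$ you actually obtain the sharper relative error $O(1/\log(1/a))$, whereas the paper's tighter cutoffs force $|\log(at)|\ll\log\log(1/a)$ on the central range and hence give only $O(\log\log(1/a)/\log(1/a))$. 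One small point to tidy: when bounding the tails you also need to carry the factor $(\log(y+w)+M)/(y+w)^b$; on the left tail $w\in[\log L-y,-y/2]$ this is $O(\log y)$ rather than $O(1)$ (take $b=0$), but that extra logarithm is of course harmless against the exponential decay of $K$.
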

\mycomment{
\begin{proof}
We split the interval $[L,\infty]$ into the intervals $[L,d]$, $[d,u]$ and $[u,\infty]$ with
\begin{align*}
d =\frac{1}{a (\log(1/a))^{1+b}} \ \text{ and }\ u=\frac{(\log(1/a))^{1+\frac{1}{\lambda}}}{a}.
\end{align*}
Then $d\to\infty$ and $u\to\infty$ as $a \to 0^+$. Furthermore
\begin{align}
\lim_{t\to\infty }
\frac{\log\log t +M}{(\log t)^b} =0.
\end{align}
Thus there exists a constant $C>0$ such that $|\frac{\log\log t+M}{(\log t)^b}| \leq C$ for all $t \geq L$.
The integral over $[L,d]$ thus can be estimated as
\begin{align*}
\left|\int_{L}^d e^{-at} t^{\lambda}\frac{\log\log t+M}{(\log t)^b} dt \right|
&\leq
C\int_{L}^d t^{\lambda} \, dt
%=
%C\left[\frac{1}{\lambda+1}t^{\lambda+1}\right]_{t=L}^{d}
=
O\left(\frac{1}{a^{1+\lambda}  (\log (1/a))^{1+\lambda+b}}\right)+ O(1).
\end{align*}
For integral over $[u,\infty]$, we use that there exists a constant $C>0$ such that $e^{-t} \leq C t^{-1-2\lambda}$ for all $t\geq 0$.
Furthermore $\frac{\log\log t + M}{(\log t)^b}$ is monotonically decaying for $t$ large enough.
This gives
\begin{align*}
\int_{u}^\infty e^{-at} t^{\lambda}\frac{\log\log t + M}{(\log t)^b} \, dt
&\leq 
C\frac{\log\log u+M}{(\log u)^b}
\int_u^\infty t^{\lambda}(at)^{-1-2\lambda}\,dt
=
C\frac{\log\log u+M}{a^{1+2\lambda}(\log u)^b}\int_u^\infty t^{-1-\lambda} \,dt\\
&%=
%C\frac{ \log\log u+M}{a^{1+2\lambda}(\log u)^b} \left[\frac{-1}{\lambda}t^{-\lambda}\right]_{t=u}^\infty
\ll
\frac{\log\log(1/a)}{a^{1+2\lambda}(\log(1/a))^b} \frac{(\log(1/a))^{-\lambda-1}}{a^{-\lambda}} \\
&\ll
\frac{\log\log(1/a)}{a^{\lambda+1}(\log(1/a))^{\lambda+1+b}}.
\end{align*}
For the computation of the remaining integral, we use the observation that for $t\in[d,u]$ we have
\begin{align*}
\frac{\log\log t +M}{(\log t)^b} 
&= 
\frac{\log(\log(1/a)+\log(at))+M}{(\log(1/a)+\log(at))^b}
= 
\frac{\log\log(1/a)+\log\big(1+\frac{\log(at)}{\log(1/a)}\big)+M}{(\log(1/a))^b}\frac{1}{\big(1- \frac{\log(at)}{\log(1/a)}\big)^b}\\
&=
\frac{\log\log(1/a)+ M}{(\log (1/a))^b}+ O\bigg(\frac{\log\log(1/a)}{(\log(1/a))^{b+1}}\bigg).
%+ O\left(\frac{\log\log(1/a)}{\log^2(1/a)}\right).
\end{align*}
This implies that 
\begin{align*}
\int_{d}^u e^{-at} t^{\lambda}\frac{\log\log t + M}{(\log t)^b} \, dt 
&=
\bigg(\frac{\log\log(1/a)+M}{(\log(1/a))^b}+ O\bigg(\frac{\log\log(1/a)}{(\log(1/a))^{b+1}}\bigg)\bigg) \int_{d}^u e^{-at} t^{\lambda}\, dt\\
\end{align*}
Further
\begin{align*}
\int_{d}^u e^{-at} t^{\lambda}\, dt
&=
\int_{da}^{ua} e^{-s} s^{\lambda}\, ds
=
\int_{0}^{\infty} e^{-s} s^{\lambda}\, ds
+
O\left(\int_{0}^{ad}s^{\lambda}\, ds\right)
+
O\left(\int_{au}^\infty e^{-2s}\, ds\right)
\\
&=
\Gamma(\lambda+1)+O\left(\frac{1}{(\log(1/a))^{1+b+\lambda}}\right).
\end{align*}
This completes the proof.
\end{proof}
}	
The proof of this result will be very similar to the proof of Lemma \ref{lem:dirk'slemma} which will be shown in Section \ref{sec:nonprincipalarcs} and thus we postpone it for later.
\begin{corollary} 
\label{cor:eq:integral_with_gamma3}
Let $f:[L, \infty) \to\R$ with $L>1$ be a function such that $f(t) =o(t)$ as $t\to\infty$.
We then have as $a\to0$ for $a>0$
\begin{align}
\int_{L}^\infty e^{-at} f(t)\frac{\log\log t}{\log t} \, dt
=
o\left(\frac{\log\log(1/a)}{a^{2}\log(1/a)}\right).
\label{eq:integral_with_gamma3.5}
\end{align}
\end{corollary}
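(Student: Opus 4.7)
The plan is to compare the $f$-integral to the explicit case $\lambda=1$, $b=1$, $M=0$ of Lemma~\ref{lem:eq:integral_with_gamma3}, which supplies the benchmark
\[
\int_L^\infty e^{-at}\, t\, \frac{\log\log t}{\log t}\, dt = \frac{\log\log(1/a)}{a^2\log(1/a)}\bigl(1 + o(1)\bigr), \qquad a\to 0^+.
\]
The scale $\frac{\log\log(1/a)}{a^2\log(1/a)}$ is precisely the one appearing in the statement of Corollary~\ref{cor:eq:integral_with_gamma3}, so the task reduces to showing that the replacement of $t$ by $f(t)$, subject to the qualitative decay $f(t)=o(t)$, strictly improves the order.

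Fix $\varepsilon>0$. The hypothesis $f(t)=o(t)$ furnishes a threshold $T_\varepsilon\geq L$ such that $|f(t)|\leq \varepsilon t$ on $[T_\varepsilon,\infty)$; enlarging $T_\varepsilon$ if necessary, I may also assume that the weight $\log\log t/\log t$ is positive and monotonically decreasing there. Writing
\[
\int_L^\infty e^{-at}\, f(t)\, \frac{\log\log t}{\log t}\, dt = \int_L^{T_\varepsilon} e^{-at}\, f(t)\, \frac{\log\log t}{\log t}\, dt + \int_{T_\varepsilon}^\infty e^{-at}\, f(t)\, \frac{\log\log t}{\log t}\, dt,
\]
the head piece is a proper integral over a compact interval whose integrand is bounded uniformly in $a$ (using $e^{-at}\leq 1$ together with local integrability of $f$ on $[L,T_\varepsilon]$); thus it contributes $O_\varepsilon(1)$, and since the benchmark diverges as $a\to 0^+$, it is $o\bigl(\frac{\log\log(1/a)}{a^2\log(1/a)}\bigr)$ for every fixed $\varepsilon$.

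For the tail I would estimate in absolute value, apply $|f(t)|\leq \varepsilon t$, and invoke Lemma~\ref{lem:eq:integral_with_gamma3} with lower endpoint $T_\varepsilon$ and parameters $\lambda=1$, $b=1$, $M=0$:
\[
\left|\int_{T_\varepsilon}^\infty e^{-at}\, f(t)\, \frac{\log\log t}{\log t}\, dt \right| \leq \varepsilon \int_{T_\varepsilon}^\infty e^{-at}\, t\, \frac{\log\log t}{\log t}\, dt = \varepsilon\, \frac{\log\log(1/a)}{a^2\log(1/a)}\bigl(1 + o(1)\bigr).
\]
Combining the two bounds yields
\[
\limsup_{a\to 0^+} \frac{a^2\log(1/a)}{\log\log(1/a)}\left|\int_L^\infty e^{-at}\, f(t)\, \frac{\log\log t}{\log t}\, dt\right| \leq \varepsilon,
\]
and letting $\varepsilon\to 0$ forces the limit to be $0$, which is exactly the meaning of $o\bigl(\frac{\log\log(1/a)}{a^2\log(1/a)}\bigr)$.

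I do not expect any serious obstacle: the entire argument is a standard \emph{$o$-out-of-$O$} upgrade driven by the qualitative hypothesis $f(t)=o(t)$, with the only quantitative input being the previous lemma. The one minor technical point is that Lemma~\ref{lem:eq:integral_with_gamma3} must be applied with the lower endpoint shifted from $L$ to $T_\varepsilon$, but this amounts only to discarding a finite portion of the integration range and leaves the stated asymptotic unchanged.
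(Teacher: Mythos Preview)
Your argument is correct and rests on the same underlying idea as the paper's: exploit $f(t)=o(t)$ via an $\varepsilon$-threshold and feed the resulting bound $|f(t)|\le\varepsilon t$ into the explicit asymptotic already established for the integrand $t\,\frac{\log\log t}{\log t}$.

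The organizational difference is that the paper does not invoke Lemma~\ref{lem:eq:integral_with_gamma3} as a black box. Instead it re-runs the three-interval decomposition $[L,d]\cup[d,u]\cup[u,\infty)$ from the proof of Lemma~\ref{lem:dirk'slemma} (with $d,u$ depending on $a$ and both tending to infinity as $a\to0^+$), bounding the outer two pieces crudely via $|f(t)|\le Ct$ and using the $o(t)$ hypothesis only on the central piece $[d,u]$. Your two-interval split with fixed $T_\varepsilon$ is cleaner, since it pushes all the analytic work into the already-proved lemma rather than repeating it; the paper's version has the mild advantage of making the mechanism fully explicit and not requiring the reader to check that shifting the lower endpoint from $L$ to $T_\varepsilon$ is harmless. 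Both routes make the same tacit regularity assumption (that $f$ is locally integrable, or equivalently that $|f(t)|\ll t$ holds on all of $[L,\infty)$ and not just eventually), which is needed for the integral to be well defined in the first place.
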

\begin{proof}
The proof is almost the same as the proof of Lemma~\ref{lem:dirk'slemma}.
Indeed, there exists a $C>0$ such that $|f(t)|\leq Ct$  for all $t$. 
Thus we can use for the integrals over $[L,d]$ and $[u,\infty]$ exactly the same bounds as in the proof of Lemma~\ref{lem:dirk'slemma}.
Further, since $u,d\to\infty$ as $a\to 0$ and $f(t)=o(1)$, there exists for each $c>0$ an $a_0$ such that $|f(t)|\leq ct$ for all $t\in [d,u]$ and $a\geq a_0$. 
Combining this with the computation of the integral over $[d,u]$ completes the proof.
\end{proof}
Now we state and prove a prime number theorem for semiprimes.
\begin{lemma}
\label{lem:PNT_two_primes}
We have 
\begin{align*}
\pi_2^*(x) := \sum_{\substack{p_1p_2 \leq x \\ p_1, p_2 \in \mathbb{P}}} 1 
\sim
2x\frac{\log\log x}{\log x}.
\end{align*}
as $x\to\infty$.
\end{lemma}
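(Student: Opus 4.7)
The plan is to reduce $\pi_2^*(x)$ to a sum over small primes, apply the prime number theorem, and then evaluate the remaining sum via Abel summation together with Mertens' first theorem. Every ordered pair $(p_1,p_2)$ with $p_1p_2\le x$ has $\min(p_1,p_2)\le\sqrt{x}$, so by inclusion--exclusion
\[
\pi_2^*(x) \;=\; 2\sum_{p\le\sqrt{x}}\pi(x/p) \;-\; \pi(\sqrt{x})^2,
\]
and the correction satisfies $\pi(\sqrt{x})^2=O(x/\log^2 x)$. For $p\le\sqrt{x}$ we have $x/p\ge\sqrt{x}$, so PNT yields $\pi(x/p) = (x/p)/\log(x/p)\cdot(1+O(1/\log x))$, whence
\[
\pi_2^*(x) \;=\; 2x\,T(x) + O\!\left(\frac{x\log\log x}{\log^2 x}\right), \qquad T(x):=\sum_{p\le\sqrt{x}}\frac{1}{p\log(x/p)}.
\]

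To evaluate $T(x)$, set $f(t)=1/\log(x/t)$ and let $S(y)=\sum_{p\le y}1/p = \log\log y + M + o(1)$ by Mertens. Abel summation gives
\[
T(x) \;=\; S(\sqrt{x})\,f(\sqrt{x}) \;-\; \int_2^{\sqrt{x}} S(t)\,f'(t)\,dt, \qquad f'(t)=\frac{1}{t(\log(x/t))^2}.
\]
The boundary term equals $\tfrac{2(\log\log x + M - \log 2)}{\log x}(1+o(1))$. For the integral, the substitution $u=\log t/\log x$ (so $dt/t=\log x\,du$, $\log(x/t)=(1-u)\log x$, and $S(t) = \log\log x + \log u + M + o(1)$) produces
\[
\int_2^{\sqrt{x}} S(t) f'(t)\,dt \;=\; \frac{1}{\log x}\int_{\log 2/\log x}^{1/2}\frac{\log\log x + \log u + M + o(1)}{(1-u)^2}\,du.
\]
Since $\int_{\log 2/\log x}^{1/2}(1-u)^{-2}\,du = 1+O(1/\log x)$ and $\int_0^{1/2}\log u\,(1-u)^{-2}\,du$ is a finite constant, this integral equals $(\log\log x + M)/\log x + O(1/\log x)$. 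Subtracting from the boundary term yields $T(x) = \tfrac{\log\log x + M - 2\log 2}{\log x}(1+o(1)) \sim \log\log x/\log x$, and therefore $\pi_2^*(x)\sim 2x\log\log x/\log x$.

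The main obstacle is that both the Abel boundary term and the Abel integral contribute at leading order $\log\log x/\log x$, so the constant $2$ in the final answer only emerges after their subtraction; in particular, the naive simplification of replacing $\log(x/p)$ by $\log x$ throughout (tempting since $\log p\le\tfrac{1}{2}\log x$ for $p\le\sqrt{x}$) destroys this cancellation and loses the correct prefactor. The reparametrisation $u=\log t/\log x$ is the natural dimensionless change of variables: it reveals that the $\log\log x$ factor arises as the logarithmic measure of the range of $u$, weighted near $u=1/2$ by the divergent factor $(1-u)^{-2}$.
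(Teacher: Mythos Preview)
Your proof is correct and follows the same route as the paper's: hyperbola decomposition, the prime number theorem applied to $\pi(x/p)$, and Abel summation with a Mertens estimate to handle $\sum_{p\le\sqrt{x}}\frac{1}{p\log(x/p)}$. The only cosmetic difference is that the paper uses Mertens' first theorem $\sum_{p\le t}\frac{\log p}{p}=\log t+O(1)$ together with $f(t)=\frac{1}{\log t\,\log(x/t)}$, whereas you use Mertens' second theorem $\sum_{p\le t}\frac{1}{p}=\log\log t+M+o(1)$ with $f(t)=\frac{1}{\log(x/t)}$ and the clean change of variables $u=\log t/\log x$.

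One small correction to your closing commentary: the ``naive simplification'' $\log(x/p)\approx\log x$ would in fact give the correct leading term here, since
\[
\sum_{p\le\sqrt{x}}\Bigl(\frac{1}{p\log(x/p)}-\frac{1}{p\log x}\Bigr)
\le\frac{2}{(\log x)^2}\sum_{p\le\sqrt{x}}\frac{\log p}{p}=O\!\left(\frac{1}{\log x}\right),
\]
so $T(x)=\frac{1}{\log x}\sum_{p\le\sqrt{x}}\frac{1}{p}+O(1/\log x)\sim\log\log x/\log x$ directly. The Abel machinery is only needed if one wants the next-order constant.
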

\begin{proof}
We denote by $\pi(x) = \sum_{p \le x} 1$ the usual prime counting function.
Then we get
\begin{align*}
\pi_2^*(x)
&=
\sum_{p_1\leq x^{1/2}} \sum_{p_2\leq x/p_1} 1 + \sum_{p_2\leq x^{1/2}}	\sum_{p_1\leq x/p_2}  1 - \sum_{p_1\leq x^{1/2}} 1 \sum_{p_2\leq x^{1/2}} 1 \nonumber\\
&=
2\sum_{p\leq x^{1/2}} \pi(x/p) - (\pi(x^{1/2}))^2.
\end{align*}
Using that $\pi(x)= \frac{x}{\log x}(1+o(1))$ and  $x^{1/2}\leq x/p\leq x$, we obtain 
\begin{align}
\pi_2^*(x) 
&=
2 \bigg(\sum_{p\leq x^{1/2}} \frac{x}{p\log(x/p)}\bigg)(1+o(1))
+O\left(\frac{x}{(\log x)^2}\right).
\label{eq:sum_in_proof_sum_of_primes2}
\end{align}
Mertens' theorem states that $\sum_{p\leq x} \frac{\log p}{p} = \log x + O(1)$, see \cite[$\mathsection$6]{nathanson}.
Combining this with Abel's summation formula with $f(t) = \frac{1}{\log(x/t) \log t}$ gives
\begin{align}
\sum_{p\leq x^{1/2}}  \frac{1}{p\log(x/p)}
&=
\sum_{p\leq x^{1/2}}  \frac{\log p}{p}f(p)
=
f(x^{1/2})\bigg(\sum_{p\leq x^{1/2}}  \frac{\log p}{p} \bigg) 
-
\int_2^{x^{1/2}} f'(t)\bigg(\sum_{p=2}^{t}  \frac{\log p}{p}\bigg)  dt\nonumber\\
&=
\int_2^{x^{1/2}} (\log t +O(1))   \frac{\log x -2\log t}{t(\log t)^2(\log(x/t))^2} dt
+\frac{2}{\log x}+O\left(\frac{1}{(\log x)^2}\right)\nonumber\\
&=
\int_2^{x^{1/2}} \frac{\log x}{t(\log t)(\log(x/t))^2}\,dt
-
2\int_2^{x^{1/2}}  \frac{1}{t(\log(x/t))^2}\,dt\nonumber\\
& 
\quad + O\bigg(\int_2^{x^{1/2}}  \frac{\log x -2\log t}{t(\log t)\log^2(x/t)} dt \bigg)
+O\left(\frac{1}{\log x}\right).
\label{eq:sum_in_proof_sum_of_primes4}
\end{align}
We now look at the three integrals in \eqref{eq:sum_in_proof_sum_of_primes4} separately.
Since $\frac{1}{2}\log t \leq \log(t/x)\leq \log t$, for the last integral we have
\begin{align}
\int_2^{x^{1/2}}   \frac{\log x -2\log t}{t (\log t)^2\log^2(x/t)} dt 
\ll
\frac{1}{\log x}\int_2^{x^{1/2}}  \frac{dt}{t(\log t)^2}  
\ll \frac{1}{(\log x)^2}.
\label{eq:sum_in_proof_sum_of_primes66}
\end{align}
Further,
\begin{align}
\int_2^{x^{1/2}}  \frac{dt}{t\log^2(x/t)}
%=
%\left[ \frac{1}{\log(x/t)}\right]_{t=2}^{x^{1/2}}
=
\frac{1}{\log x} + O\left(\frac{1}{(\log x)^2}\right).
\label{eq:sum_in_proof_sum_of_primes63}
\end{align}
Using that $1/(1-r)^2 \leq 1+4r$ for $|r|\leq 1/2$, we get
\begin{align}
\int_2^{x^{1/2}} \frac{\log x}{t\log(t)\log^2(x/t)} dt
&=
\frac{1}{\log x}
\int_2^{x^{1/2}} \frac{dt}{t\log(t)(1-\frac{\log t}{\log x})^2} \nonumber\\
&\leq 
\frac{1}{\log x}
\int_2^{x^{1/2}} \frac{dt}{t\log t}  + 
4\int_2^{x^{1/2}} \frac{dt}{t(\log t)^2} =
\frac{\log\log x}{\log x}
+
O\left( \frac{1}{\log x}  \right).
\label{eq:sum_in_proof_sum_of_primes6}
\end{align}
Inserting \eqref{eq:sum_in_proof_sum_of_primes66}, \eqref{eq:sum_in_proof_sum_of_primes63} and \eqref{eq:sum_in_proof_sum_of_primes6} in to \eqref{eq:sum_in_proof_sum_of_primes4} gives
\begin{align}
\sum_{p=2}^{x^{1/2}}  \frac{1}{p\log(x/p)}
=
\frac{\log\log x}{\log x}
+
O\left( \frac{1}{\log x}  \right).
\label{eq:sum_in_proof_sum_of_primes8}
\end{align}
Inserting \eqref{eq:sum_in_proof_sum_of_primes8} into \eqref{eq:sum_in_proof_sum_of_primes2} completes the proof.
\end{proof}

We have for $z\in\C$ with that
\begin{align*}
\Phi_{\mathbb{P}_2}(z)
=
\sum_{j=1}^\infty
\frac{1}{j}
\sum_{p_1,p_2 \in \mathbb{P}}
z^{p_1p_2j}
\end{align*}
We write $z=\rho \e(\alpha)$ with $0\leq \rho<1$, $\alpha\in\R$.
Further, we set $X= (\log \frac{1}{\rho})^{-1}$ and thus $\rho =e^{-1/X}$. 
In particular, we have $X\to\infty$ if $\rho\to 1^-$. 
We thus can write $\Phi_{\mathbb{P}_2}(z)$ as 
\begin{align}
\Phi_{\mathbb{P}_2}(\rho \e(\alpha))
=
\sum_{j=1}^\infty
\frac{1}{j}
\sum_{p_1,p_2 \in \mathbb{P}}
\rho^{-p_1p_2j}\e(jp_1p_2\alpha)
=
\sum_{j=1}^\infty
\frac{1}{j}
\sum_{p_1,p_2 \in \mathbb{P}}
e^{-p_1p_2j/X}\e(jp_1p_2\alpha).
\label{eq:Phi_with_X_and_alpha_two_primes}
\end{align}
We will also need the behaviour of the derivatives of  $\Phi$.
We have for $m\in\N_0$ that
\begin{align}
\left(\rho\frac{\partial}{\partial \rho}\right)^m 
\Phi_{\mathbb{P}_2}(\rho \e(\alpha))
=
\sum_{j=1}^\infty
\frac{1}{j}
\sum_{p_1,p_2 \in \mathbb{P}}(p_1p_2j)^m
e^{-p_1p_2j/X}\e(jp_1p_2\alpha).
\label{eq:Phi_with_X_and_alpha_mth_derivative_with_two_primes}
\end{align}
Furthermore, for all $b>0$ and $m\in\N_0$ that
\begin{align}
(bj)^m e^{-bj/X}
=
\int_{b}^\infty   
(j^mt^m - mt^{m-1}j^{m-1}X) \frac{j}{X} e^{-\frac{j}{X}t}\, dt.
\label{eq:exp_as_integral}
\end{align}
Indeed, partial integration implies that
\begin{align*}
\int_{b}^\infty   
j^mt^m \frac{j}{X} e^{-\frac{j}{X}t}\, dt
%&=
%\left[-j^mt^m e^{-\frac{j}{X}t}\right]_{t=b}^\infty + 
%\int_{b}^\infty   j^m mt^{m-1} e^{-\frac{j}{X}t}\, dt\\
&=
j^mb^m e^{-\frac{j}{X}b}
+
\int_{b}^\infty   (mj^{m-1} t^{m-1} X)\frac{j}{X} e^{-\frac{j}{X}t}  dt.
\end{align*}
Combining \eqref{eq:Phi_with_X_and_alpha_mth_derivative_with_two_primes} and \ref{eq:exp_as_integral} gives
\begin{align}
\left(\frac{\partial}{\partial \rho}\right)^m \Phi_{\mathbb{P}_2}(\rho \e(\alpha))
&=
\sum_{j=1}^\infty
\frac{1}{j}
\sum_{p_1,p_2} \e(jp_1p_2\alpha)
\int_{p_1p_2}^\infty (j^{m} t^{m}-mj^{m-1} t^{m-1} X) \frac{j}{X} e^{-\frac{j}{X}t}\, dt\nonumber\\
&=
\sum_{j=1}^\infty
\frac{1}{j}
\int_{2^2}^\infty (j^{m} t^{m}-mj^{m-1} t^{m-1} X) \frac{j}{X} e^{-\frac{j}{X}t} \sum_{p_1p_2\leq t} \e(jp_1p_2\alpha)  \, dt.
\label{eq:Phi_with_X_and_alpha_mth_derivative_with_integral_two_primes}
\end{align}
Equipped with these tools we may now prove the main result.
\begin{lemma}
\label{lem:moments_of_Phi_two_primes}
We have for all $m\in\N_0$ that
\begin{align*}
\left(\rho\frac{\partial}{\partial \rho}\right)^m 
\Phi_{\mathbb{P}_2}(\rho)
=
2\frac{\zeta(2)\Gamma(m+1)X^{m+1} \log\log X}{\log X}\left(1 + o(1) \right).
\end{align*}
\end{lemma}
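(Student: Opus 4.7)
The plan is to specialize formula \eqref{eq:Phi_with_X_and_alpha_mth_derivative_with_integral_two_primes} to $\alpha = 0$ so that the innermost exponential sum collapses to $\pi_2^*(t)$, then substitute the asymptotic $\pi_2^*(t) = 2t\log\log t/\log t \cdot (1 + o(1))$ from Lemma \ref{lem:PNT_two_primes}, and finally evaluate the two resulting integrals via Lemma \ref{lem:eq:integral_with_gamma3}. Concretely, setting $\alpha=0$ yields
\begin{align*}
\left(\rho\frac{\partial}{\partial \rho}\right)^m \Phi_{\mathbb{P}_2}(\rho) = \sum_{j=1}^\infty \frac{1}{j} \int_{4}^\infty \left(j^{m} t^{m} - m j^{m-1} t^{m-1} X\right) \frac{j}{X} e^{-jt/X} \pi_2^*(t) \, dt,
\end{align*}
and after inserting the asymptotic for $\pi_2^*(t)$ the leading contribution splits into two integrals whose weights are $t^{m+1}\log\log t/\log t$ and $t^m \log\log t/\log t$.

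For each of these I apply Lemma \ref{lem:eq:integral_with_gamma3} with $a = j/X$, $b = 1$, $M = 0$, using $\lambda = m+1$ for the first integral and $\lambda = m$ for the second (the case $m=0$ is trivial since the second term then vanishes). Each application produces $\Gamma(\lambda+1)(X/j)^{\lambda+1}\log\log(X/j)/\log(X/j)$ up to a multiplicative error $1 + O(\log\log(X/j)/\log(X/j))$. After multiplying by the prefactors $2j^{m+1}/X$ and $-2m j^m$, the powers of $j$ and $X$ uniformly collapse to $X^{m+1}/j$, and the key algebraic identity $\Gamma(m+2) - m\Gamma(m+1) = \Gamma(m+1)$ reduces the $j$-th contribution to $2\Gamma(m+1)X^{m+1}/j \cdot \log\log(X/j)/\log(X/j)$. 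Dividing by the outer $j$ and summing over $j$ gives
\begin{align*}
2\Gamma(m+1) X^{m+1} \sum_{j \ge 1} \frac{1}{j^2} \frac{\log\log(X/j)}{\log(X/j)}.
\end{align*}

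I then split this $j$-sum at $j = X^{1/2}$. For $j \le X^{1/2}$ the identities $\log(X/j) = \log X \cdot (1 + O(\log j/\log X))$ and $\log\log(X/j) = \log\log X + O(\log j/\log X)$ give $\log\log(X/j)/\log(X/j) = \log\log X/\log X \cdot (1 + o(1))$, so this partial sum converges to $\zeta(2)\log\log X/\log X \cdot (1+o(1))$. For $j > X^{1/2}$ Lemma \ref{lem:eq:integral_with_gamma3} no longer applies (its hypothesis $a = j/X \to 0^+$ fails), but bounding $\pi_2^*(t) \ll t$ trivially in the original integral yields $|I_j| \ll X^{m+1}/j$, so the tail contributes at most $\sum_{j > X^{1/2}} X^{m+1}/j^2 \ll X^{m+1/2}$, which is $o(X^{m+1}\log\log X/\log X)$ and therefore absorbed.

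The main obstacle is controlling the error from the secondary term $R(t) := \pi_2^*(t) - 2t\log\log t/\log t = o(t\log\log t/\log t)$ uniformly in $j$. Writing $R(t) = \widetilde R(t) \cdot t \log\log t/\log t$ with $\widetilde R(t) = o(1)$, and noting that $\widetilde R(t) t^\lambda = o(t^\lambda)$, the proof of Lemma \ref{lem:eq:integral_with_gamma3} carries over with the obvious modifications (with $\widetilde R$ in place of the constant prefactor, echoing the derivation of Corollary \ref{cor:eq:integral_with_gamma3}) to yield the same evaluation multiplied by $o(1)$. Summing over $j$ preserves this improvement and the total error is absorbed into the claimed $(1+o(1))$, producing the asymptotic $2\zeta(2)\Gamma(m+1)X^{m+1}\log\log X/\log X \cdot (1+o(1))$ that matches the statement of the lemma.
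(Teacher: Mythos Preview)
Your proof is correct and follows essentially the same approach as the paper's own argument: specialize \eqref{eq:Phi_with_X_and_alpha_mth_derivative_with_integral_two_primes} to $\alpha=0$, truncate the $j$-sum at $X^{1/2}$ via the trivial bound $\pi_2^*(t)\ll t$, evaluate the two resulting integrals with Lemma~\ref{lem:eq:integral_with_gamma3} (and its $o(1)$-variant, Corollary~\ref{cor:eq:integral_with_gamma3}, for the remainder $R(t)$), collapse via $\Gamma(m+2)-m\Gamma(m+1)=\Gamma(m+1)$, and finish with $\log\log(X/j)/\log(X/j)=\log\log X/\log X\,(1+o(1))$ uniformly for $j\le X^{1/2}$. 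The only difference from the paper is the order of presentation (the paper bounds the tail first, then computes the main terms), not the substance.
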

\begin{proof}
We can write \eqref{eq:Phi_with_X_and_alpha_mth_derivative_with_integral_two_primes} as
\begin{align}
\left(\rho\frac{\partial}{\partial \rho}\right)^m \Phi_{\mathbb{P}_2}(\rho)
&=
\sum_{j=1}^\infty \frac{1}{j}
\int_{2^{2}}^\infty (j^{m} t^{m}-mj^{m-1} t^{m-1} X) \frac{j}{X} e^{-\frac{j}{X}t} \pi_2^*(t)  \, dt.
\label{eq:phi_at_rho_with_derivatives_two_primes}
\end{align}
We first derive an upper bound for integral in \eqref{eq:phi_at_rho_with_derivatives_two_primes}.
The variable substitution $u=\frac{j}{X}t$ gives
\begin{align*}
\left|\int_{2^{2}}^\infty (j^{m} t^{m}-mj^{m-1} t^{m-1} X)  \frac{j}{X}e^{-\frac{j}{X}t} \pi_2^*(t) \, dt\right|
&\leq 
C\int_{0}^\infty ((jt)^m+mX(jt)^{m-1})  \frac{jt}{X} e^{-\frac{j}{X}t} dt\\
&\leq 
C\frac{X}{j} \int_{0}^\infty ((Xu)^m+mX(Xu)^{m-1})  u e^{-u}  du\\
&\leq 
C\frac{X^{m+1}}{j}\int_{0}^\infty (u^{m+1}+mu^{m})  e^{-u} du\\
&=
C\frac{X^{m+1}}{j} (\Gamma(m+2)+m\Gamma(m+1))
\ll	\frac{X^{m+1}}{j}.
\end{align*}
We now split the sum in \eqref{eq:phi_at_rho_with_derivatives_two_primes} into the cases $j>x^{1/2}$ and  $j\leq x^{1/2}$. 
Inserting the above bound and using that $\sum_{j>x^{1/2}}^\infty j^{-2} \ll x^{-1/2}$ gives
\begin{align}
\left(\frac{\partial}{\partial \rho}\right)^m \Phi_{\mathbb{P}_2}(\rho)
&=
\sum_{j\leq x^{1/2}}
\frac{1}{j}
\int_{2^{2}}^\infty (j^{m} t^{m}-mj^{m-1} t^{m-1} X) \frac{j}{X} e^{-\frac{j}{X}t} \pi_2^*(t)  \, dt
+
O(	X^{m+1/2}).
\label{eq:Phi_with_X_and_alpha_mth_derivative_with_integral_truncated_two_primes}
\end{align}
Next we compute the integrals in \eqref{eq:Phi_with_X_and_alpha_mth_derivative_with_integral_truncated_two_primes}.
Using Lemma~\ref{lem:eq:integral_with_gamma3} with $b=c=1$ and Corollary~\ref{cor:eq:integral_with_gamma3} yields
\begin{align}
\int_{2^{2}}^\infty j^{m} t^{m} \frac{j}{X} e^{-\frac{j}{X}t} \pi_2^*(t) dt
&=
\frac{2j^{m+1}}{X} \int_{2^{2}}^\infty t^{m+1} e^{-\frac{j}{X}t} \frac{\log\log t}{\log t}(1+o\left(1\right)) dt\nonumber\\
&=
\frac{2j^{m+1}}{X} \frac{\Gamma(m+2)\log\log(X/j)}{(j/X)^{m+2}\log(X/j)}(1 + o\left(1\right))\nonumber\\
&=
2\frac{X^{m+1}}{j}\frac{\Gamma(m+2)\log\log(X/j)}{\log(X/j)}(1 + o\left(1\right)).
\label{eq:lem:moments_of_Phi_one_of_the _integrals_two_primes}
\end{align}
The $o(1)$ term in the last two equations has to be interpreted $o(1)$ as $X/j\to\infty$.
Note that this $o(1)$ depends only on the ratio $X/j$, but not on $X$ and $j$. 
Inserting \eqref{eq:lem:moments_of_Phi_one_of_the _integrals_two_primes} with $m$ and $m-1$ into \eqref{eq:Phi_with_X_and_alpha_mth_derivative_with_integral_truncated_two_primes} leads us to
\begin{align}
\left(\frac{\partial}{\partial \rho}\right)^m \Phi_{\mathbb{P}_2}(\rho)
&=
\sum_{j\leq x^{1/2}}
2\frac{X^{m+1}}{j^{2}}\frac{\Gamma(m+2) -m\Gamma(m+1) }{\log(X/j)}\log\log(X/j)
(1 + o(1))
+
O(	X^{m+1/2})\nonumber\\
&=
2X^{m+1}\Gamma(m+1)\sum_{j\leq x^{1/2}}
\frac{1}{j^{2}}\frac{ \log\log(X/j)}{\log(X/j)}(1 + o(1))
+
O(	X^{m+1/2}).
\label{eq:last_step_moments}
\end{align}
Using that $j\leq X^{1/2}$, we get	
\begin{align*}
\frac{\log\log(X/j)}{\log(X/j)} 
%			=
%			\frac{\log\log(X) +O(\frac{\log(j)}{\log(X)}) }{\log(X) (1-\frac{\log(j)}{\log(X)})}
=
\frac{\log\log X}{\log X} +O\left(\frac{\log\log X}{(\log X)^2}\right).
\end{align*}
Inserting this into \eqref{eq:last_step_moments} and that $\sum_{j> x^{1/2}}j^{-2} \ll X^{-1/2}$ completes the proof.
\end{proof}
\subsection{The asymptotic orders of magnitude}
Later on we will need estimates for the auxiliary functions involved in the main theorems of Section \ref{sec:introduction}. 
Recall that \eqref{strategy2} was valid for any real $\rho <1$. 
Now let $x$ be a large real keeping in mind that we will choose $x = n$ in the partitions $\partition_{\lambda,\mu}$. 
We will choose $\rho = \rho_{\lambda,\mu}(x)$ such that
\begin{align}
x = \rho (\lambda\Phi'_{\mathbb{P}_2}(\rho)+\mu\Phi'_{\mathbb{P}^2}(\rho)).
\label{eq:def_saddle_point}
\end{align}
From Lemma \ref{maintermstheorem} it follows that for $\lambda>0$ the relationship between $x$ and $\rho$ is indeed well-defined, injective and that $\rho \to 1^-$ as $x \to \infty$.
\begin{proposition} 
\label{propositionmagnitude}
Let $\lambda>0$ and denote by $X=X_{\lambda,\mu} = (\log(\frac{1}{\rho_{\lambda,\mu}}))^{-1}$.
One has as $x\to\infty$ that
\begin{align} \label{eq:magnitudes0}
X &= \bigg(\frac{ \frac{x}{\lambda} \log \frac{x}{\lambda}}{4 \zeta(2)(\log \log \frac{x}{\lambda}  - \log 2 +M )}\bigg)^{\frac{1}{2}}
\bigg(1 + \frac{1}{2} \frac{\log \log \frac{x}{\lambda}}{\log \frac{x}{\lambda}} - \frac{1}{2} \frac{\log \log \log \frac{x}{\lambda}}{\log \frac{x}{\lambda}} + O\bigg(\frac{1}{\log x}\bigg)\bigg).
\end{align}
Furthermore,
\begin{align} \label{magnitudes1}
x \log \frac{1}{\rho(x)} 
&= 
\bigg( \frac{4 \zeta(2) \lambda x ( \log \log \frac{x}{\lambda}- \log 2 +M )}{\log \frac{x}{\lambda}} \bigg)^{\frac{1}{2}}  \bigg(1 - \frac{1}{2} \frac{\log \log \frac{x}{\lambda}}{\log \frac{x}{\lambda}} + \frac{1}{2} \frac{\log \log \log \frac{x}{\lambda}}{\log \frac{x}{\lambda}} + O\bigg(\frac{1}{\log x}\bigg)\bigg)
\end{align}
as $x \to \infty$. For the function $\Phi_{\mathbb{P}_2}$ and its derivatives, one has that for all $m\in\N_0$
\begin{align} \label{magnitudes3}
\lambda\Phi_{\mathbb{P}_2,(m)}(\rho(x)) 
&=
\lambda\bigg(\rho \frac{d}{d\rho}\bigg)^m \Phi_{\mathbb{P}_2}(\rho) \nonumber \\
&= 
\Gamma(m+1)\left(\frac{x}{\lambda}\right)^{\frac{m+1}{2}}\bigg(\frac{\log \frac{x}{\lambda}}{4\zeta(2) (\log \log \frac{x}{\lambda}-\log 2+M  )}\bigg)^{\frac{m-1}{2}} \bigg(1 + O\bigg(\frac{\log \log x}{\log x}\bigg)\bigg),
\end{align}
as $x \to \infty$.
\end{proposition}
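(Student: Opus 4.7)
The plan is to derive all three formulas from a single careful analysis of the implicit equation \eqref{eq:def_saddle_point}, using Theorem~\ref{maintermstheorem} as the main input; formulas \eqref{magnitudes1} and \eqref{magnitudes3} will then follow as routine consequences once the expansion \eqref{eq:magnitudes0} for $X$ is in hand. First, I would argue that in \eqref{eq:def_saddle_point} the term $\mu\rho\Phi'_{\mathbb{P}^2}(\rho)$ is of strictly smaller order than $\lambda\rho\Phi'_{\mathbb{P}_2}(\rho)$: the former corresponds to partitions into squared primes, which was handled in \cite{gafniprimepowers} and gives $\rho\Phi'_{\mathbb{P}^2}(\rho) \asymp X^{3/2}/\sqrt{\log X}$, whereas Theorem~\ref{maintermstheorem} with $m=1$ yields $\rho\Phi'_{\mathbb{P}_2}(\rho) \asymp X^{2}\log\log X/\log X$. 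Uniformly over the three parameter pairs $(\lambda,\mu)\in\{(\tfrac12,\tfrac12),(\tfrac12,-\tfrac12),(1,0)\}$, the saddle equation therefore reduces to
\[
\frac{x}{\lambda} \;=\; \frac{2\zeta(2)\,X^{2}\,(M+\log\log X)}{\log X}\,\bigg(1+O\Big(\tfrac{1}{\log X}\Big)\bigg).
\]

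Setting $y=x/\lambda$ and taking logarithms yields
\[
2\log X \;=\; \log y + \log\log X - \log\bigl(2\zeta(2)(M+\log\log X)\bigr) + O(1/\log X),
\]
and I would then bootstrap. A zeroth pass gives $\log X = \tfrac12\log y + O(\log\log y)$, whence $\log\log X = \log\log y - \log 2 + O(\log\log y/\log y)$ and consequently $M+\log\log X = \mathfrak{c}_2 + \log\log y + O(\log\log y/\log y)$ with $\mathfrak{c}_2 = M-\log 2$. Substituting these refinements back into $X^{2} = y\log X/[2\zeta(2)(M+\log\log X)]\cdot(1+O(1/\log X))$ and expanding the square root produces exactly the multiplicative correction $1+\tfrac12(\log\log y)/\log y - \tfrac12(\log\log\log y)/\log y + O(1/\log y)$ of \eqref{eq:magnitudes0}; the two exhibited lower-order terms come respectively from the $\log\log X$ in the numerator and from the $\log(M+\log\log X)$ in the denominator of the iterated implicit equation.

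The remaining two formulas are corollaries. Since $\rho = e^{-1/X}$, we have $x\log(1/\rho) = x/X$, so \eqref{magnitudes1} is obtained by inverting the expansion for $X$, which merely flips the signs of the two displayed lower-order corrections. For \eqref{magnitudes3}, I would apply Theorem~\ref{maintermstheorem} to write
\[
\lambda\bigl(\rho\tfrac{d}{d\rho}\bigr)^{m}\Phi_{\mathbb{P}_2}(\rho) \;=\; \lambda\,\Gamma(m+1)\,X^{m-1}\cdot\frac{2\zeta(2)\,X^{2}\,(M+\log\log X)}{\log X}\,\bigl(1+O(1/\log X)\bigr),
\]
and recognize the last factor as exactly $x/\lambda$ (modulo the negligible $\mu\Phi'_{\mathbb{P}^2}$ contribution) by the saddle-point equation \eqref{eq:def_saddle_point}. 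The expression collapses to $\Gamma(m+1)\,x\,X^{m-1}\bigl(1+O(\log\log x/\log x)\bigr)$, and plugging in the leading order of \eqref{eq:magnitudes0} for $X^{m-1}$ reproduces the claimed form.

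The hard part will be the bootstrap in the middle step. One must iterate finely enough to separate the $\tfrac12(\log\log y)/\log y$ correction from the $-\tfrac12(\log\log\log y)/\log y$ correction, which requires tracking the nested contributions of $\log\log X$ and of $\log(M+\log\log X)$ simultaneously; any premature simplification at an intermediate stage would merge these into a single $O(\log\log y/\log y)$ error and lose the precision claimed in \eqref{eq:magnitudes0}. A secondary care point is verifying uniformly in the three parameter pairs $(\lambda,\mu)$ that the $\mu\Phi'_{\mathbb{P}^2}$ contribution stays within the $O(\log\log x/\log x)$ error budget required for \eqref{magnitudes3}.
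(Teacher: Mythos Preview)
Your proposal is correct and follows essentially the same route as the paper: both reduce the saddle equation to $x/\lambda = 2\zeta(2)X^{2}(M+\log\log X)/\log X\cdot(1+O(1/\log X))$ after dismissing the $\mu\Phi'_{\mathbb{P}^2}$ term via Gafni's estimate, then take logarithms and bootstrap to isolate the two lower-order corrections, and finally deduce \eqref{magnitudes1} from $x\log(1/\rho)=x/X$ and \eqref{magnitudes3} by combining Theorem~\ref{maintermstheorem} with the saddle equation to collapse to $\Gamma(m+1)\,x\,X^{m-1}$. One small slip: Gafni's estimate gives $\rho\Phi'_{\mathbb{P}^2}(\rho)\asymp X^{3/2}/\log X$, not $X^{3/2}/\sqrt{\log X}$, but this is immaterial since either is $o(X^{2}\log\log X/\log X)$.
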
	

\begin{proof}
We begin with the case $\lambda =1$ and $\mu =0$ and thus have to consider $\Phi_{\mathbb{P}_2}$ only.
Let us suppose that $x$ is sufficiently large in which case $\rho$ will be very close to $1$ and so $X = \frac{1}{\log(\frac{1}{\rho})}$ is also large. We have shown in Lemma \ref{maintermstheorem} that
\begin{align} \label{asympaux01}
x = \rho \frac{d}{d \rho} \Phi_{\mathbb{P}_2}(\rho) = \frac{2\zeta(2)X^2}{\log X}(\log\log X+ M)\bigg(1+ O\bigg(\frac{1}{\log X}\bigg)\bigg).
\end{align}
Taking the logarithm of \eqref{asympaux01} implies that
\begin{align}
\log x &= 2 \log X  - \log \log X + \log \log \log X+ \log(2 \zeta(2)) + \frac{M}{\log \log X} + O\bigg(\frac{1}{(\log \log X)^2}\bigg).
\label{eq:asympaux02}
\end{align}
This implies that $\log x\ll \log X \ll \log x$. 
Furthermore, taking the logarithm of \eqref{eq:asympaux02} gives
\begin{align*}
\log \log x &= \log \log X + \log 2 +O\left(\frac{\log \log X}{\log X}\right), \nonumber \\
\log \log \log x &= \log \log \log X + O\left(\frac{1}{\log \log X }\right).
\end{align*}
We now plug these into \eqref{asympaux01} so that
\begin{align*}
x 
&= 
\frac{2 \zeta(2)X^2 (\log \log x-\log 2+M +O(\frac{\log \log x}{\log x} ) )}{\frac{1}{2}\log x + \frac{1}{2}\log \log x - \frac{1}{2}\log \log \log x -\frac{\log(2 \zeta(2))}{2}+O(\frac{1}{\log \log X})}\bigg(1+ O\bigg(\frac{1}{\log x}\bigg)\bigg)\\
&=
\frac{2 \zeta(2)X^2\left(\log \log x-\log 2+M \right)}{\frac{1}{2}\log x + \frac{1}{2}\log \log x - \frac{1}{2}\log \log \log x -\frac{\log(2 \zeta(2))}{2}}\bigg(1+ O\bigg(\frac{1}{\log x}\bigg)\bigg).
\end{align*}
Solving for $X$ yields
\begin{align} \label{asympaux02}
X &= \bigg(\frac{x \log x}{4 \zeta(2)(\log \log x  - \log 2 +M )}\bigg)^{\frac{1}{2}}
\bigg(1 + \frac{1}{2} \frac{\log \log x}{\log x} - \frac{1}{2} \frac{\log \log \log x}{\log x} + O\bigg(\frac{1}{\log x}\bigg)\bigg).
\end{align}
This completes the proof of \eqref{eq:magnitudes0} for the case $\lambda =1$.
The case $\lambda>0$ and $\mu =0$ follows also immediately.
Indeed, in this case \eqref{eq:def_saddle_point} reduces to $x = \lambda \rho \Phi'_{\mathbb{P}_2}(\rho)$ and we thus have to replace $x$ by $x/\lambda$ in \eqref{asympaux02} only.
It remains to check the chase  $\lambda>0$ and $\mu$ arbitrary.
We thus have to solve
\begin{align} \label{asympaux21}
x = \lambda \rho \frac{d}{d \rho} \Phi_{\mathbb{P}_2}(\rho) + \mu \rho \frac{d}{d \rho} \Phi_{\mathbb{P}^2}(\rho),
\end{align}
We now know from \cite[Lemma~3.1]{gafniprimepowers} that 
\begin{align}
\rho \Phi'_{\mathbb{P}^2}(\rho)
=
\frac{\Gamma(3/2)\zeta(3/2)X^{3/2}}{\log X} \left(1+ O\left(\frac{1}{\log X}\right)\right).
\label{eq:saddle_gafni_derivative}
\end{align}
Thus, combining Theorem~\ref{maintermtheorem1} and \eqref{eq:saddle_gafni_derivative},we can rewrite \eqref{asympaux21} as
\begin{align} \label{asympaux31}
x = \lambda\frac{2\zeta(2)X^2}{\log X}(\log\log X+ M)\bigg(1+ O\bigg(\frac{1}{\log X}\bigg)\bigg).
\end{align}
Now, \eqref{asympaux31} and \eqref{asympaux01} agree and we can thus use essentially the same computation as above.
This completes the proof of \eqref{eq:magnitudes0}.
For the proof of \eqref{magnitudes1}, we have just to combine $x \log \frac{1}{\rho} = xX^{-1}$ and \eqref{eq:magnitudes0}.

It remains to prove \eqref{magnitudes3}. From \eqref{asympaux01} and Lemma \ref{maintermstheorem} we can also write
\begin{align} \label{asympaux03}
\lambda\Phi_{\mathbb{P}_2,(m)}(\rho) =  \lambda\Gamma(m+1)xX^{m-1}\bigg(1+ O\bigg(\frac{1}{\log X}\bigg)\bigg).
\end{align}
Inserting \eqref{asympaux02} into \eqref{asympaux03} yields the results for $m=0$ and for $m \ge 1$ in \eqref{magnitudes3}	.
\end{proof}
%	
%%%%%%%%%%%%%%%%%%%%%%%%%%%%%%%%%%%%%%%%%%%%%%%%%%%%%%%%%%%%%%%%%%%%%%
\section{The minor arcs} \label{sec:minorarcs}
%%%%%%%%%%%%%%%%%%%%%%%%%%%%%%%%%%%%%%%%%%%%%%%%%%%%%%%%%%%%%%%%%%%%%%
\subsection{The fundamental estimate for the double Weyl sum}
In order to show that every sufficiently large odd number is a sum of three primes in \cite{vinogradov}, Vinogradov studied the Weyl sum $S_1(\beta,x) := \sum_{p \le x} \e(\beta x)$ where the sum is taken over primes $p$ and $\beta \in \R$. Vinogradov showed that if $|\beta - \frac{a}{q}| \le \frac{1}{q^2}$ with $(a,q)=1$, then one has that $S_1(\beta,x)$ is bounded by $(\tfrac{x}{\sqrt{q}} + x^{\frac{4}{5}} + \sqrt{x}\sqrt{q}) (\log 2x)^3$. In \cite[$\mathsection$25]{davenport}, Davenport states that this bound is sharp, even in the special case $\beta = a/q$, but that if the hypothesis is weakened then the corresponding bound will also be weakened. For our purposes, we need to extend the hypothesis by creating a generalization which is weaker but whose special case contains Vinogradov's bound. Our result, interesting in its own right and useful for other purposes outside the scope of the paper, is as follows.
\begin{proposition} \label{extendedvinogradovhypothesis}
Let $\beta \in \R$. If $a \in \Z$ and $q \in \N$, $\Upsilon>0$ and
\begin{align*} 
\bigg|\beta - \frac{a}{q}\bigg| \le \frac{\Upsilon}{q^2} \quad \textnormal{with} \quad (a,q)=1,
\end{align*}
then one has that
\begin{align*}
S_1(\beta,x) = \sum_{p \le x} \e(\beta p) \ll \Upsilon \bigg(\frac{x}{\sqrt{q}} + x^{\frac{4}{5}} + \sqrt{x}\sqrt{q}\bigg)(\log x)^3, 
\end{align*}
where the sum runs over primes $p$.
\end{proposition}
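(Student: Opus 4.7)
The plan is to reduce the statement to the classical Vinogradov bound (under the standard Diophantine hypothesis $|\beta-a/q|\le 1/q^{2}$) by applying Dirichlet's approximation theorem with a carefully tuned parameter and then relating the resulting Dirichlet denominator $q'$ back to $q$ via the triangle inequality. The case of interest, and the one I would focus on, is $\Upsilon\ge 1$. At the outset I would dispose of the trivial regime $q>x/\Upsilon^{2}$: there one has $\Upsilon\sqrt{xq}\ge x\ge|S_{1}(\beta,x)|$, so the estimate holds via the crude bound $|S_{1}(\beta,x)|\le\pi(x)\le x$.

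In the main regime $q\le x/\Upsilon^{2}$, I would set $Q:=\lceil 4\Upsilon^{2}q\rceil$ and invoke Dirichlet's approximation theorem to produce $a'\in\Z$, $q'\in\N$ with $(a',q')=1$, $1\le q'\le Q$, and $|\beta-a'/q'|\le 1/(q'Q)\le 1/(q')^{2}$. The classical Vinogradov bound then applies to $(a',q')$ and yields $S_{1}(\beta,x)\ll(x/\sqrt{q'}+x^{4/5}+\sqrt{xq'})(\log x)^{3}$. The task thus reduces to showing that $q'$ is comparable to $q$ up to a factor of $\Upsilon$.

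If the reduced fractions $a/q$ and $a'/q'$ coincide, then $q'=q$ and the desired estimate is immediate (with room to spare, since $\Upsilon\ge 1$). Otherwise $|a/q-a'/q'|\ge 1/(qq')$, and the triangle inequality against both Diophantine hypotheses would give $1/(qq')\le\Upsilon/q^{2}+1/(q'Q)\le\Upsilon/q^{2}+1/(4\Upsilon^{2}qq')$. This rearranges to $1\le\Upsilon q'/q+1/(4\Upsilon^{2})\le\Upsilon q'/q+1/4$, whence $q'\ge 3q/(4\Upsilon)$. Combining this lower bound with the Dirichlet upper bound $q'\le 4\Upsilon^{2}q$ yields $x/\sqrt{q'}\ll\sqrt{\Upsilon}\,x/\sqrt{q}\le\Upsilon\,x/\sqrt{q}$, $\sqrt{xq'}\ll\Upsilon\sqrt{xq}$, and $x^{4/5}\le\Upsilon x^{4/5}$, which together give the asserted bound.

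The main obstacle, such as it is, is the choice of the parameter $Q$ in Dirichlet's theorem. Taking $Q\asymp\Upsilon^{2}q$ is doing double duty: the factor $\Upsilon^{2}$ forces the Dirichlet denominator to satisfy $q'\ll\Upsilon^{2}q$, which controls the $\sqrt{xq'}$ term by $\Upsilon\sqrt{xq}$; and it simultaneously makes the parasitic term $1/(4\Upsilon^{2})$ small enough that the triangle inequality forces $q'\gg q/\Upsilon$, which controls the $x/\sqrt{q'}$ term by $\Upsilon\,x/\sqrt{q}$. I expect the only fiddly point to be the clean packaging of the edge case $a'/q'\ne a/q$ together with values of $\Upsilon$ close to $1$; the Vinogradov--Vaughan machinery itself is invoked only as a black box.
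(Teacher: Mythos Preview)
Your argument is correct (for $\Upsilon\ge 1$, which is the only case the paper uses and indeed the only case in which the statement is true: for $\Upsilon<1$ take $q=1$, $\beta\in\Z$, and note $S_1(\beta,x)=\pi(x)$ while the claimed bound tends to $0$). It takes a genuinely different route from the paper.

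The paper reproves Vinogradov's bound from scratch via Vaughan's identity, modifying the standard counting lemma to
\[
\sum_{t\le T}\min\!\left(\frac{N}{t},\frac{1}{\|t\beta\|}\right)\ll \Upsilon\left(\frac{N}{q}+T+q\right)\log(2qT)
\]
under the hypothesis $|\beta-a/q|\le\Upsilon/q^{2}$, and then tracks the extra factor $\Upsilon$ through the rest of the argument. You instead treat the classical Vinogradov bound as a black box and reduce to it by a single application of Dirichlet's theorem with $Q\asymp\Upsilon^{2}q$, sandwiching the new denominator via $q/\Upsilon\ll q'\ll\Upsilon^{2}q$. Your approach is shorter, avoids reopening the Vaughan machinery, and would transfer verbatim to any Weyl-sum bound of the same shape (e.g.\ the $p^{k}$ bounds mentioned in the paper). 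The paper's approach, by going inside the proof, could in principle yield marginally sharper dependence on $\Upsilon$ in some of the intermediate terms, but this is not exploited downstream. One cosmetic point: since $Q=\lceil 4\Upsilon^{2}q\rceil$ you have $q'\le 4\Upsilon^{2}q+1\le 5\Upsilon^{2}q$ rather than $4\Upsilon^{2}q$, but this is harmless.
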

\begin{proof}
The technique to prove this relies on Vaughan's identity, see \cite{vaughanidentity} and \cite[$\mathsection$25]{davenport} as well as \cite{vaughanbook}. We define the truncated the Dirichlet series 
\begin{align*}
F(s) = \sum_{m \le U} \Lambda(m) m^{-s}, \quad G(s) = \sum_{d \le V} \mu(d) d^{-s}
\end{align*}
and note the identity
\begin{align} \label{vaughanaux1}
-\frac{\zeta'}{\zeta}(s) = F(s)-\zeta(s) F(s) G(s) - \zeta'(s) G(s) + \bigg(-\frac{\zeta'}{\zeta}(s)-F(s)\bigg)(1-\zeta(s)G(s)),
\end{align}
valid for $\sigma > 1$. The Dirichlet coefficients of the four functions on the right-hand side of \eqref{vaughanaux1} can be calculated and we see that
\begin{align*}
\Lambda(n) = a_1(n) + a_2(n) + a_3(n) + a_4(n),
\end{align*}
where the first two coefficients are given by 
\begin{align*}
a_1(n) = 
\begin{cases}
\Lambda(n) \quad &\mbox{if $n \le U$}, \nonumber \\
0 \quad &\mbox{if $n > U$},
\end{cases}
\quad
a_2(n) = - \sum_{\substack{mdr = n \\ m \le U \\ d \le V}} \Lambda(m) \mu(d),
\end{align*}
and the last two coefficients are
\begin{align*}
a_3(n) = \sum_{\substack{hd = n \\ d \le V}} \mu(d) \log h, \quad \textnormal{and} \quad a_4(n) = - \sum_{\substack{mk = n \\ m > U \\ k>1}} \Lambda(m) \bigg(\sum_{\substack{d | k \\ d \le V}} \mu(d)\bigg).
\end{align*}
From this we can construct the sum
\begin{align*}
\sum_{n \le N} f(n) \Lambda(n) = S_1 + S_2 + S_3 + S_4 \quad \textnormal{with} \quad S_i = \sum_{n \le N}f(n) a_i(n) \quad \textnormal{for} \quad i=1,2,3,4.
\end{align*}
This can be shown to satisfy
\begin{align*}
\sum_{n \le N} f(n) \Lambda(n) &\ll U + (\log N) \sum_{t \le UV} \max_{w} \bigg|\sum_{w \le r \le N/t} f(rt)\bigg| \nonumber \\
&\quad +N^{\frac{1}{2}}(\log N)^3 \max_{U \le M \le N/V} \max_{V \le j \le N/M} \bigg(\sum_{v < k \le N/M} \bigg|\sum_{\substack{M < n \le 2M \\ m \le N/k \\ m \le N/j}} f(mj) \overline{f(mk)}\bigg|\bigg)^{\frac{1}{2}}.
\end{align*}
Only the last term represents a difficulty from the known case in which $\Upsilon = 1$. Suppose that $a,q$ and $\Upsilon$ are as in \eqref{extendedvinogradovhypothesis}. From this we can write
\begin{align} \label{auxvinogradov0}
\gamma = \beta - a/q \implies \beta = \gamma + a/q \implies |\gamma| \le \frac{\Upsilon}{q^2}.
\end{align}
Recall that for positive integers $N_1$ and $N_2$ one has
\begin{align*}
\sum_{n=N_1}^{N_2} \e(n \beta) = \frac{\e((N_2+1)\beta)-\e(N_1 \beta)}{\e(\beta)-1} \ll \min \bigg(N_2 - N_1, \frac{1}{||\beta||}\bigg),
\end{align*}
where $||x||$ denotes the distance from $x$ to the nearest integer. Hence
\begin{align} \label{auxvinogradov1}
\sum_{t \le T} \max_w \bigg|\sum_{w \le r \le N/t} \e(r t \beta)\bigg| \ll \sum_{t \le T} \min\bigg(\frac{N}{t}, \frac{1}{||t \beta||}\bigg).
\end{align}
For now we assume that the right-hand side of \eqref{auxvinogradov1} is
\begin{align} \label{tempbound}
\ll \Upsilon \bigg(\frac{N}{q}+T+q\bigg)\log(2qT),
\end{align}
for $\beta$ satisfying \eqref{auxvinogradov0}, a bound that we will prove shortly. In that case, following \cite[$\mathsection$25]{davenport} we can deduce that $S_1(\beta)$ satisfies
\begin{align*}
S_1(\beta) &\ll U + \Upsilon\bigg(\frac{N}{q}+UV+q\bigg)(\log 2qN)^2 \nonumber \\
&\quad  + N^{\frac{1}{2}}(\log N)^3 \max_{U \le M \le N/V} \max_{V < j \le N/M} \bigg(\sum_{V< k \le N/M} \min \bigg(M, \frac{\Upsilon}{||(k-j)\beta||} \bigg)\bigg)^{\frac{1}{2}}.
\end{align*}
The last term of the above equation is
\begin{align*}
\ll N^{\frac{1}{2}}(\log N)^3 \max_{U \le M \le N/V} \bigg(M + \sum_{1< m \le N/M} \min \bigg(\frac{N}{m}, \frac{\Upsilon}{||m\beta||} \bigg)\bigg)^{\frac{1}{2}}.
\end{align*}
By employing \eqref{tempbound} this expression is seen to be 
\begin{align*}
&\ll N^{\frac{1}{2}}(\log N)^3 \max_{U \le M \le N/V}  \bigg(M + \Upsilon \bigg(\frac{N}{M}+\frac{N}{q}+q\bigg)\bigg)^{\frac{1}{2}} (\log qN)^{\frac{1}{2}} \nonumber \\
&\ll \Upsilon (NV^{-1/2} + NU^{-1/2} + Nq^{-1/2} + N^{\frac{1}{2}}q^{\frac{1}{2}}) (\log q N)^3.
\end{align*}
Hence, collecting all the terms we arrive at
\begin{align*}
S_1(\beta) \ll \Upsilon (UV + q + NU^{-1/2}+NV^{-1/2}+Nq^{-1/2}+N^{\frac{1}{2}}q^{\frac{1}{2}}) (\log q N)^3.
\end{align*}
If $q > N$ then the lemma holds trivially. We then assume that $q \le N$ and obtain the statement of the lemma by taking $U = V = N^{\frac{1}{2}}$.

The last item in the proof is to show that indeed \eqref{tempbound} holds. Take $t=hq+r$ with $1 \le r \le q$ and put $\gamma = \beta - a/q$. In that case
\begin{align*}
\sum_{t \le T} \min \bigg(\frac{N}{t}, \frac{\Upsilon}{||t \beta ||}\bigg) \ll \sum_{0 \le h \le T/q} \sum_{r=1}^q \min \bigg(\frac{N}{hq+r}, \Upsilon \bigg|\bigg| ra/q + hq \gamma + r \gamma \bigg|\bigg|^{-1} \bigg).
\end{align*}
First we consider $h=0$ and $1 \le r \le \tfrac{q}{2\Upsilon}$. For such $r$ we have $|r \gamma| \le \frac{1}{2q}$, so that the contribution from these terms is 
\begin{align*}
\ll \sum_{1 \le r \le \frac{q}{2 \Upsilon}} \frac{1}{||\tfrac{ra}{q}||-\tfrac{1}{2q}} \ll q \log q.
\end{align*}
We have at most $O(\Upsilon)$ such sums with $r \in [1,q]$, and hence the contribution of all $r$ in the interval $[1,q]$ is $O(\Upsilon q \log q)$.
For all remaining terms we have $hq + r \gg (h+1)q$. Let $h \in \Z$ be given. Then for any $b \in \Z_{>0}$ the interval $J=[b/q,(b+1)/q]$ has length $1/q$. Let $J_1$ and $J_2$ be subintervals of $[b/q,(b+1+\Upsilon)/q]$ such that their lengths $|J_1|, |J_2|$ satisfy $|J_1|, |J_2| \le \Upsilon /q$ and such that $\tfrac{ra}{q} \in J_1 \cup J \cup J_2$. There are at most $2\Upsilon +2$ values of $r$ with $1 \le r \le q$ for which
\begin{align*}
\frac{ra}{q} + h q \gamma + r \gamma \in [b/q, (b+1)q] \; \modu 1 \implies \frac{ra}{q} + r \gamma \in [b/q - h \gamma q, (b+1)/q - h \gamma q] \; \modu 1. \nonumber
\end{align*}
Therefore, we end up with
\begin{align*}
\sum_{0 \le h \le T} \sum_{r=1}^q \min \bigg(\frac{N}{(h+1)q}, \Upsilon \bigg|\bigg|\frac{ra}{q} + hq\gamma + r\gamma\bigg|\bigg|^{-1} \bigg) &\ll \sum_{0 \le h \le T/q} \bigg(\frac{N}{(h+1)q} + \Upsilon q \log 2q \bigg) \nonumber \\
&\ll \Upsilon \bigg(\frac{N}{q}+T+q\bigg) \log 2qT,
\end{align*}
thereby completing the final ingredient of the proposition.
\end{proof}
Before proceeding to the estimate on the Weyl sum over two primes, namely $\sum_{p_1 p_2 \le x} \e(\alpha p_1 p_2)$, we need an auxiliary technical lemma. 
%\begin{color}{red}
\begin{lemma} 
\label{lem:min_max_for_minor}
Let $F$, $G_1$, $G_2$ and $G_3$ be continuous, real valued functions on $(0,\infty)$ such that F is strictly decreasing and all $G_i$ are increasing.
Further, suppose that for $i=1,2,3$
\begin{align}
\lim_{x\to\infty} F(x) = \lim_{x\to0 } G_i(x) = 0
\ \text{ and } \
\lim_{x\to0} F(x) = \lim_{x\to\infty } G_i(x) = \infty.
\end{align} 
Set $G(x):= \max\{G_1(x),G_2(x),G_3(x)\}$ and $H(x):=\max\{F(x),G(x)\}$. 
We then have
\begin{align}
\min_{x\in(0,\infty)} H(x) 
=
F(\min\{M_1,M_2,M_3\})
=
G(\min\{M_1,M_2,M_3\}),
\end{align}
where $M_i$ is the solution of the equation $F(M_i) = G_i(M_i)$ for $i=1,2,3$.
\end{lemma}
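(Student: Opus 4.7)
The plan is to analyze the behaviour of $H$ to the left and to the right of $m := \min\{M_1,M_2,M_3\}$ and show that $H$ is strictly decreasing on $(0,m)$ and non-decreasing on $(m,\infty)$, so the minimum is attained at $x = m$.

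First, I would verify that each $M_i$ is well-defined. Since $F$ is continuous and strictly decreasing with $F(0^+) = \infty$ and $F(\infty) = 0$, and each $G_i$ is continuous and increasing with $G_i(0^+) = 0$ and $G_i(\infty) = \infty$, the function $F - G_i$ is continuous and strictly decreasing from $+\infty$ to $-\infty$. Hence the equation $F(x) = G_i(x)$ has a unique solution $M_i \in (0,\infty)$. Moreover, monotonicity gives $F(x) > G_i(x)$ for $x < M_i$ and $F(x) < G_i(x)$ for $x > M_i$. Note also that $G(x) = \max_i G_i(x)$ is itself increasing as the maximum of increasing functions.

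Next, let $m = \min\{M_1,M_2,M_3\}$ and choose $k \in \{1,2,3\}$ with $m = M_k$. For $x < m$ we have $x < M_i$ for every $i$, so $F(x) > G_i(x)$ for each $i$, hence $F(x) > G(x)$ and $H(x) = F(x)$. Since $F$ is strictly decreasing, $H$ is strictly decreasing on $(0,m)$. For $x > m$ we have $x > M_k$, so $F(x) < G_k(x) \leq G(x)$, giving $H(x) = G(x)$; since $G$ is increasing, $H$ is non-decreasing on $(m,\infty)$. At the boundary point $x = m$, one has $F(m) = G_k(m)$ by definition of $M_k$, and for each $i \neq k$ the inequality $m \leq M_i$ implies $G_i(m) \leq F(m) = G_k(m)$. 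Therefore $G(m) = G_k(m) = F(m)$, and $H(m) = F(m) = G(m)$.

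Combining these three observations, $H$ decreases on $(0,m]$ and is bounded below on $[m,\infty)$ by $H(m)$, so $\min_{x \in (0,\infty)} H(x) = H(m) = F(m) = G(m)$, which is the asserted formula. The main conceptual step is the observation that $F$ and $G$ actually cross (once) at $m$, i.e.\ that the smallest of the pairwise crossing points $M_i$ automatically coincides with the crossing point of $F$ with the pointwise maximum $G$; the rest is routine monotonicity bookkeeping.
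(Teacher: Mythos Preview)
Your proof is correct and follows essentially the same route as the paper: both identify the smallest crossing point $m=\min_i M_i$, verify that $F(m)=G(m)$ there, and then use the monotonicity of $F$ and of the $G_i$ to bound $H$ from below by $H(m)$ on either side. Your version is slightly more explicit about existence and uniqueness of the $M_i$ and about the piecewise description of $H$, but the argument is the same.
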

The assumptions on $F$, $G_1$, $G_2$ and $G_3$ imply that the solutions $M_i$ exist and are unique.
Furthermore, the lemma also holds for more than three functions $G_i$ or just two of them.
We have formulated Lemma~\ref{lem:min_max_for_minor} with three $G_i$ since we need it in this form below.

\begin{proof}[Proof of Lemma~\textnormal{\ref{lem:min_max_for_minor}}]
We can assume that $M_1\leq M_2\leq M_3$. Otherwise we just relabel the functions $G_i$.
First, we show that 
\begin{align}
G_1(M_{1})\geq G_2(M_{2})\geq G_2(M_{1}).
\label{eq:inequal_G_i1}
\end{align}
Since $G_2$ is increasing and $M_2\geq M_1$, the second inequality follows immediately.
To show the first inequality, we use the definition of $M_i$ and that $F$ is decreasing. 
We have
\begin{align*}
%	G_2(M_{2})=F(M_{2})\leq F(M_{1})=G_1(M_{1}).
G_1(M_{1})=F(M_{1})\geq F(M_{2})=G_2(M_{2}).
\end{align*}
Thus \eqref{eq:inequal_G_i1} holds and therefore $G_1(M_{1})\geq G_2(M_{1})$.
%Since $G_2$ is increasing and $M_1 \leq M_2$, we get $G_1(M_{1})\geq G_2(M_{1})$.
Similarly $G_1(M_{1})\geq G_3(M_{1})$. Thus
\begin{align}
F(M_1)=G_1(M_1)=\max\{G_1(M_1), G_2(M_1), G_3(M_1)\} =G(M_1)= H(M_1). 
\end{align}
Moreover, since $F$ is decreasing and $G_1$ is increasing, we get
\begin{align}
H(M_1) \leq 
\begin{cases}
F(x) & \text{for }x\leq M_1,\\
G_1(x)& \text{for }x\geq M_1.
\end{cases}
\end{align}
This completes the proof.
\end{proof}

The last result we shall need is a bilinear form for exponential sums from \cite[$\mathsection$13]{iwanieckowalski}.
\begin{lemma} \label{lemmaiwanieckowalski}
Let  $\alpha \in \R$ and $a \in \Z$ as well as $q \in \N$ such that 
\begin{align}
\bigg|\alpha-\frac{a}{q}\bigg| \le \frac{1}{q^{2}} \quad \textnormal{with} \quad (a,q)=1. \nonumber
\end{align}
For any complex numbers $\xi_m, \eta_n$ with $|\xi_m| \le 1$ and $|\eta_n| \le 1$ we have
\begin{align*}
\sumtwo_{\substack{mn \le x \\ m>M \\ n>N}} \xi_m \eta_n \e(\alpha m n) \ll \bigg(\frac{x}{M}+\frac{x}{N}+\frac{x}{q}+q\bigg)^{\frac{1}{2}}x^{\frac{1}{2}}(\log x)^2.%, \textnormal{ with $(a,q)=1 $ and $|\alpha-a/q| \le q^{-2}$.} 
\end{align*}
\end{lemma}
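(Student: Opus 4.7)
The approach I would take is Cauchy-Schwarz combined with the classical Vinogradov-Vaughan min-sum estimate, after a dyadic decomposition in one variable to tame the hyperbolic constraint $mn\le x$. Specifically, I would decompose the $n$-range into $O(\log x)$ dyadic intervals $(N_1,2N_1]$ with $N\le N_1\le x/M$ and denote by $S_{N_1}$ the resulting piece. On each dyadic block the hyperbolic condition $mn\le x$ simply becomes $m\le x/N_1$, so the $m$-range reduces to the clean interval $(M,x/N_1]$. Cauchy-Schwarz in the $m$-variable then yields
\begin{align*}
|S_{N_1}|^2 \le \frac{x}{N_1} \sum_{M<m\le x/N_1}\biggl|\sum_{N_1<n\le 2N_1}\eta_n\,\e(\alpha mn)\biggr|^2.
\end{align*}

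Expanding the inner square and separating the diagonal $n_1=n_2$ (which contributes $O(x^2/N_1)$) from the off-diagonal, the latter reduces to a sum of geometric progressions in $m$, each bounded by $\min(x/N_1,\|\alpha(n_1-n_2)\|^{-1})$. Setting $h=n_1-n_2$ and invoking the classical Vinogradov-Vaughan estimate
\begin{align*}
\sum_{h\le H}\min\bigl(V,\|\alpha h\|^{-1}\bigr) \ll \bigl(HV/q+H+V+q\bigr)\log(qH),
\end{align*}
valid under $|\alpha-a/q|\le 1/q^2$ and $(a,q)=1$ (in the same spirit as the bound \eqref{tempbound} already established inside the proof of Proposition~\ref{extendedvinogradovhypothesis}), with $H=N_1$, $V=x/N_1$, and using both $N_1\le x/M$ and $N_1\ge N$, one absorbs the auxiliary terms into
\begin{align*}
|S_{N_1}|^2\ll x\bigl(x/M+x/N+x/q+q\bigr)\log(qx)
\end{align*}
uniformly in $N_1$. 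Summing over the $O(\log x)$ dyadic blocks via the triangle inequality then produces a bound well within the stated $(x/M+x/N+x/q+q)^{1/2}x^{1/2}(\log x)^2$.

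The main obstacle is the careful accounting of logarithmic factors together with the range constraints: one must ensure that the dyadic parameter $N_1$ is always linked to the ambient quantities $M$, $N$, $q$, $x$ by the inequalities $N\le N_1\le x/M$, so that after applying the Vinogradov-Vaughan min-sum lemma the auxiliary terms $HV/q$, $H$, $V$ and $q$ can all be folded cleanly into the shape $x(x/M+x/N+x/q+q)$ rather than into something involving the block parameter $N_1$ itself. A secondary subtlety is the behaviour on boundary dyadic blocks near $n\sim x/M$, where the $m$-range shrinks to nothing; these contribute negligibly and can be absorbed into the main error without affecting the overall exponent of $\log x$.
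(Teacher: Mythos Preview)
The paper does not actually prove this lemma: its ``proof'' is the single sentence ``See Lemma 13.8 from \cite{iwanieckowalski}.'' Your sketch is precisely the standard argument that Iwaniec and Kowalski give there --- dyadic localisation of one variable, Cauchy--Schwarz in the other, expansion of the square, and then the Vinogradov min-sum estimate $\sum_{h\le H}\min(V,\|\alpha h\|^{-1})\ll(HV/q+H+V+q)\log(2qH)$ --- so in substance you have reproduced the cited proof rather than offered an alternative.

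One small point worth tightening: your claim that ``on each dyadic block the hyperbolic condition $mn\le x$ simply becomes $m\le x/N_1$'' is not literally true, since for $n\in(N_1,2N_1]$ the inner sum still carries the $m$-dependent constraint $n\le x/m$. However this does no damage to the argument. After Cauchy--Schwarz and expanding the square, one interchanges the order of summation; for fixed $n_1,n_2$ the sum over $m$ is then a geometric progression over an interval contained in $(M,x/\max(n_1,n_2)]\subset(M,x/N_1]$, and the bound $\min(x/N_1,\|\alpha(n_1-n_2)\|^{-1})$ for a geometric progression holds regardless of the precise endpoints. So the min-sum lemma applies exactly as you wrote, and the final bound follows.
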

\begin{proof}
See Lemma 13.8 from \cite{iwanieckowalski}.
\end{proof}

We are now in a position to prove the fundamental estimate needed to bound the minor arcs. 
\begin{theorem} \label{doublevinogradov}
Let $\alpha \in \R$. If $a \in \Z$ and $q \in \N$ are such that
\begin{align*}
\bigg|\alpha - \frac{a}{q}\bigg| \le \frac{1}{q^2} \quad \textnormal{with} \quad (a,q)=1,
\end{align*}
then one has that
\begin{align*}
S_2(\alpha, X) := \sum_{p_1 p_2 \le X} \e (\alpha p_1 p_2) \ll \frac{X}{q^{\frac{1}{6}}} (\log X)^{\frac{7}{3}} + X^{\frac{16}{17}} (\log X)^{\frac{39}{17}} + X^{\frac{7}{8}}q^{\frac{1}{8}} (\log X)^{\frac{9}{4}},
\end{align*}
where the sum is taken over primes $p_1$ and $p_2$.
\end{theorem}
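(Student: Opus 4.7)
The plan is to split $S_2(\alpha,X)$ by the size of the smaller prime factor, using Proposition \ref{extendedvinogradovhypothesis} to handle the regime where one prime is confined to a short range, and the bilinear form of Lemma \ref{lemmaiwanieckowalski} to handle the regime where both primes are large, then to balance the cutoff via Lemma \ref{lem:min_max_for_minor}. The delicate step throughout is the log-factor accounting: the fractional exponents $7/3$, $39/17$, $9/4$ arise precisely from the mismatch between Vinogradov's $(\log X)^3$ and Iwaniec--Kowalski's $(\log X)^2$.

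Let $P\in[1,X^{1/2}]$ be a parameter to be optimized. Using the $p_1\leftrightarrow p_2$ symmetry and discarding the diagonal $\{p_1=p_2\}$ (which contributes only $O(X^{1/2})$), it suffices to bound
$$V_1:=\sum_{p_1\leq P}\sum_{p_2\leq X/p_1}\e(\alpha p_1p_2),\qquad V_2:=\sumtwo_{\substack{p_1,p_2>P\\ p_1p_2\leq X}}\e(\alpha p_1p_2).$$
For $V_1$, fix $p_1$ with $(p_1,q)=1$; then $(ap_1,q)=1$ and $|\alpha p_1-(ap_1)/q|\leq p_1/q^2$, so Proposition \ref{extendedvinogradovhypothesis} with $\Upsilon=p_1$ bounds the inner sum by $p_1\bigl(\tfrac{X/p_1}{\sqrt q}+(X/p_1)^{4/5}+\sqrt{(X/p_1)q}\bigr)(\log X)^3$. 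The $\omega(q)\ll\log q$ exceptional primes $p_1\mid q$ reduce to a fraction with denominator $q/p_1$ and yield a strictly smaller contribution. Summing over $p_1\leq P$ using the trivial bound $\sum_{p\leq P}p^r\ll P^{r+1}$ gives
$$V_1\ll(\log X)^3\Big(\frac{XP}{\sqrt q}+X^{4/5}P^{6/5}+\sqrt{Xq}\,P^{3/2}\Big).$$
For $V_2$, Lemma \ref{lemmaiwanieckowalski} with $\xi_m=\eta_n=\mathbf{1}_{\Pri}$ and $M=N=P$ yields
$$V_2\ll\Big(\frac{X}{\sqrt P}+\frac{X}{\sqrt q}+\sqrt{Xq}\Big)(\log X)^2.$$
The terms $(\log X)^2X/\sqrt q$ and $(\log X)^2\sqrt{Xq}$ are absorbed by the matching terms of $V_1$ for $P\geq 1$, so writing $L=\log X$,
$$S_2(\alpha,X)\ll F(P)+G_1(P)+G_2(P)+G_3(P),$$
where $F(P):=L^2X/\sqrt P$ is strictly decreasing and $G_1(P):=L^3XP/\sqrt q$, $G_2(P):=L^3X^{4/5}P^{6/5}$, $G_3(P):=L^3\sqrt{Xq}\,P^{3/2}$ are strictly increasing.

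To optimize I apply Lemma \ref{lem:min_max_for_minor}. Solving $F(P_i)=G_i(P_i)$ gives the three critical points
$$P_1=\frac{q^{1/3}}{L^{2/3}},\qquad P_2=\frac{X^{2/17}}{L^{10/17}},\qquad P_3=\frac{X^{1/4}}{q^{1/4}L^{1/2}},$$
with corresponding values
$$F(P_1)=\frac{XL^{7/3}}{q^{1/6}},\qquad F(P_2)=X^{16/17}L^{39/17},\qquad F(P_3)=X^{7/8}q^{1/8}L^{9/4}.$$
Taking $P=\min\{P_1,P_2,P_3\}$ and invoking Lemma \ref{lem:min_max_for_minor} bounds $S_2(\alpha,X)$ by a constant times $F(P_1)+F(P_2)+F(P_3)$, which is precisely the claimed estimate. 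The main obstacle is not either exponential sum bound taken in isolation but rather the log-factor bookkeeping in this three-way optimization: the powers $L^2$ and $L^3$ must be kept intact throughout so that the balance equations produce the exponents $7/3$, $39/17$, and $9/4$.
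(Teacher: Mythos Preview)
Your proof is correct and follows essentially the same route as the paper: the hyperbola decomposition into a bilinear piece handled by Lemma~\ref{lemmaiwanieckowalski} and a small-prime piece handled by Proposition~\ref{extendedvinogradovhypothesis} with $\Upsilon=p_1$, followed by the three-way optimization via Lemma~\ref{lem:min_max_for_minor}, matches the paper's argument (where your $P$ is the paper's $M$, your $V_1,V_2$ are the paper's $\Sigma_2,\Sigma_1$, and the small square $\Sigma_4$ is absorbed just as you implicitly do). The paper carries out the $p_1\mid q$ case in slightly more detail, but your observation that those $O(\log q)$ primes yield a strictly smaller contribution is equivalent.
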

\begin{proof}
The idea is to break the summation $S_2(\alpha, X)$ into different types of sums. From Figure \eqref{hyperbola1} we can see that we want to bound the area underneath the hyperbola $uv \le X$. 
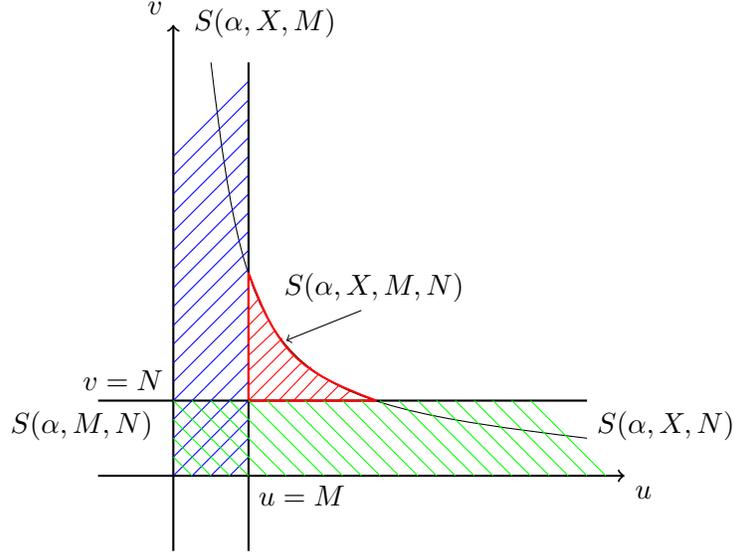
\begin{figure}[h]
\centering
\begin{tikzpicture}
\draw[thick,->] (-1,0) -- (6,0) node[anchor=north west] {$u$};
\draw[thick,->] (0,-1) -- (0,6) node[anchor=south east] {$v$};
\draw (.5,5.5) .. controls (1,1.3) and (1.3,1) .. (5.5,.5) ;
\draw[thick] (-1,1) -- (5.5,1);
\draw[thick] (1,-1) -- (1,5.5);
\draw (1,0) node[anchor=north west] {$u=M$};
\draw (0,1) node[anchor=south east] {$v=N$};

\draw[thick, red] (1,1) -- (1,2.7);
\draw[thick, red] (1,1) -- (2.7,1);
\draw[thick, red] (1,2.7) .. controls (1.39,1.65) and (1.65,1.39) .. (2.7,1);
\draw[red] (1,1) -- (1.6,1.6);
\draw[red] (1.2,1) -- (1.7,1.5);
\draw[red] (1.4,1) -- (1.83,1.45);
\draw[red] (1.6,1) -- (1.93,1.35);
\draw[red] (1.8,1) -- (2.07,1.27);
\draw[red] (2,1) -- (2.2,1.2);
\draw[red] (2.2,1) -- (2.35,1.13);
\draw[red] (1,1.2) -- (1.5,1.7);
\draw[red] (1,1.4) -- (1.45,1.83);
\draw[red] (1,1.6) -- (1.35,1.93);
\draw[red] (1,1.8) -- (1.27,2.07);
\draw[red] (1,2) -- (1.2,2.2);
\draw[red] (1,2.2) -- (1.13,2.35);

\draw[blue] (0.75,0) -- (1,0.25);
\draw[blue] (0.5,0) -- (1,0.5);
\draw[blue] (0.25,0) -- (1,0.75);
\draw[blue] (0,0) -- (1,1);
\draw[blue] (0,0.25) -- (1,1.25);
\draw[blue] (0,.5) -- (1,1.5);
\draw[blue] (0,.75) -- (1,1.75);
\draw[blue] (0,1) -- (1,2);
\draw[blue] (0,1.25) -- (1,2.25);
\draw[blue] (0,1.5) -- (1,2.5);
\draw[blue] (0,1.75) -- (1,2.75);
\draw[blue] (0,2) -- (1,3);
\draw[blue] (0,2.25) -- (1,3.25);
\draw[blue] (0,2.5) -- (1,3.5);
\draw[blue] (0,2.75) -- (1,3.75);
\draw[blue] (0,3) -- (1,4);
\draw[blue] (0,3.25) -- (1,4.25);
\draw[blue] (0,3.5) -- (1,4.5);
\draw[blue] (0,3.75) -- (1,4.75);
\draw[blue] (0,4) -- (1,5);
\draw[blue] (0,4.25) -- (1,5.25);

\draw[green] (.25,0) -- (0,.25);
\draw[green] (.5,0) -- (0,.5);
\draw[green] (.75,0) -- (0,.75);
\draw[green] (1,0) -- (0,1);
\draw[green] (1.25,0) -- (.25,1);
\draw[green] (1.5,0) -- (0.5,1);
\draw[green] (1.75,0) -- (.75,1);
\draw[green] (2,0) -- (1,1);
\draw[green] (2.25,0) -- (1.25,1);
\draw[green] (2.5,0) -- (1.5,1);
\draw[green] (2.75,0) -- (1.75,1);
\draw[green] (3,0) -- (2,1);
\draw[green] (3.25,0) -- (2.25,1);
\draw[green] (3.5,0) -- (2.5,1);
\draw[green] (3.75,0) -- (2.75,1);
\draw[green] (4,0) -- (3,1);
\draw[green] (4.25,0) -- (3.25,1);
\draw[green] (4.5,0) -- (3.5,1);
\draw[green] (4.75,0) -- (3.75,1);
\draw[green] (5,0) -- (4,1);
\draw[green] (5.25,0) -- (4.25,1);
\draw[green] (5.5,0) -- (4.5,1);
\draw[green] (5.75,0) -- (4.75,1);
\draw (5.5,1) node[anchor=north west] {$S(\alpha,X,N)$};
\draw (2.3,5.7) node[anchor=south east] {$S(\alpha,X,M)$};
\draw (4,2.5) node[anchor=east] {$S(\alpha,X,M,N)$};
\draw (-2.3,1) node[anchor=north west] {$S(\alpha,M,N)$};
\draw [thin, ->] (2.5,2.2) -- (1.5,1.8);
\end{tikzpicture}
\caption{Hyperbola representation of $uv \le X$.} \label{hyperbola1}
\end{figure}
We set $M=N$. Therefore, we split $S_2(\alpha,X)$ into these four types of sums. First we compute the red area and then the two green and blue rectangles. Due to the double computation of the small square we subtract that common area from the total contribution. This leads us to
\begin{align*}
S_2(\alpha, X) &= \sumtwo_{\substack{p_1 p_2 \le X \\ p_1 > M \\ p_2 > M}} \e (\alpha p_1 p_2) + \sumtwo_{\substack{p_1 p_2 \le X \\ p_1 \le M}} \e (\alpha p_1 p_2) + \sumtwo_{\substack{p_1 p_2 \le X \\ p_2 \le M}} \e (\alpha p_1 p_2) - \sumtwo_{\substack{p_1 \le M \\ p_2 \le M}} \e (\alpha p_1 p_2) \nonumber \\
&=: \Sigma_1 + \Sigma_2 + \Sigma_3 - \Sigma_4.
\end{align*}
Next we define $\xi_m$ and $\eta_n$ as the indicator functions
\begin{align*}
\xi_m := \begin{cases}
1, \quad &\mbox{if $m=p_1$ is a prime}, \nonumber \\
0, \quad &\mbox{otherwise},
\end{cases}
\quad
\eta_n := \begin{cases}
1, \quad &\mbox{if $n=p_2$ is a prime}, \nonumber \\
0, \quad &\mbox{otherwise}.
\end{cases}
\end{align*}
By using Lemma \eqref{lemmaiwanieckowalski} we can bound the first sum as 
\begin{align} \label{vaughanfirstsum}
\Sigma_1 &= \sumtwo_{\substack{m_1 m_2 \le X \\ m_1 > M \\ m_2 > M}} \xi_m \eta_n \e (\alpha m_1 m_2) \ll \bigg(\frac{X}{M}+\frac{X}{q}+q\bigg)^{\frac{1}{2}}X^{\frac{1}{2}}(\log X)^2 \nonumber \\
&\ll X(\log X)^2 \bigg(\frac{1}{M^{\frac{1}{2}}} + \frac{1}{\sqrt{q}} + \frac{\sqrt{q}}{\sqrt{X}} \bigg). %\ll \frac{X(\log X)^2}{M^{\frac{1}{2}}}
\end{align}
Note that $\Sigma_2$ and $\Sigma_3$ are symmetric and thus we may use the first one as a representative. We then have that $\Sigma_2$ is given by
\begin{align} \label{representativesum}
\Sigma_2 = \sumtwo_{\substack{p_1 p_2 \le X \\ p_1 \le M}} \e(\alpha p_1 p_2) = \sum_{p_1 \le M} \sum_{p_2 \le X/p_1} \e(\alpha p_1 p_2).
\end{align}
Let $\beta = p_1 \alpha$. We use Dirichlet's theorem to choose $a' \in \Z$ and $q' \in \N$ such that
\begin{align*}
\bigg|\beta - \frac{a'}{q'}\bigg| \le \frac{1}{(q')^2} \quad \textnormal{with} \quad (a',q')=1.
\end{align*}
Then picking up from \eqref{representativesum} we arrive at 
\begin{align} \label{representativesumabs}
\bigg|\sum_{p_1 \le M} \sum_{p_2 \le X/p_1} \e(\alpha p_1 p_2)\bigg| \le \sum_{p_1 \le M} \bigg| \sum_{p_2 \le X/p_1} \e(\alpha p_1 p_2) \bigg| = \sum_{p_1 \le M} \bigg| \sum_{p_2 \le X/p_1} \e(\beta p_2) \bigg|.
\end{align}
Now we define the following parameter
\begin{align*} 
\Upsilon := \begin{cases}
1, \quad &\mbox{if $q \equiv 0 \; \modu p_1$}, \nonumber \\
p_1, \quad &\mbox{if $q \not\equiv 0 \; \modu p_1$}.
\end{cases}
\end{align*}
To compute the upper bound of $\Sigma_2$ we have to consider two cases: whether $p_1$ divides $q$, in which case $\Upsilon = 1$, or not, in which case $\Upsilon = p_1$. We start with the first case. Since $(a,q)=1$, if $q \equiv 0 \; \modu p_1$ then $(ap_1,q)=(p_1,q)=p_1$. Let $q'=\tfrac{q}{p_1}$, then we have
\begin{align} \label{auvinogradoUpsilon1}
\bigg|p_1 \alpha - \frac{p_1a}{q}\bigg| \le \frac{p_1^2}{q^2} &\implies \bigg|\alpha - \frac{a}{q}\bigg| \le \frac{p_1}{q^2}, \quad q \equiv 0 \; \modu p_1, \nonumber \\
&\implies \bigg|\alpha - \frac{a}{q}\bigg| \le \frac{\Upsilon}{q^2} = \frac{1}{q^2}, \quad (a,q)=1.
\end{align}
Relation \eqref{auvinogradoUpsilon1} along with \eqref{representativesum} and \eqref{representativesumabs} and Proposition \eqref{extendedvinogradovhypothesis} with $\Upsilon = 1$ yields
\begin{align} \label{vaughansfirstdivision}
\mathfrak{s}_1 &
:=
\bigg|\sumtwo_{\substack{p_1 p_2 \le X \\ p_1 \le M \\ q \equiv 0 \; \modu p_1}} \e(\alpha p_1 p_2)\bigg| \nonumber \\
&\ll 
\sum_{\substack{p_1 \le M \\ q \equiv 0 \; \modu p_1}} \bigg(\frac{X}{p_1 \sqrt{q}} + \bigg(\frac{X}{p_1}\bigg)^{\frac{4}{5}} + \sqrt{q}\sqrt{X}{\sqrt{p_1}} \bigg) \bigg(\log \frac{X}{p_1}\bigg)^3 \nonumber \\
&\ll 
\sum_{\substack{p_1 \le M \\ q \equiv 0 \; \modu p_1}} \frac{X}{p_1 \sqrt{q}}\bigg(\log \frac{X}{p_1}\bigg)^3 
+ 
\sum_{\substack{p_1 \le M \\ q \equiv 0 \; \modu p_1}} \bigg(\frac{X}{p_1}\bigg)^{\frac{4}{5}}\bigg(\log \frac{X}{p_1}\bigg)^3  + \sum_{\substack{p_1 \le M \\ q \equiv 0 \; \modu p_1}} \frac{\sqrt{X}\sqrt{q}}{\sqrt{p_1}} \bigg(\log \frac{X}{p_1}\bigg)^3 \nonumber \\
&\ll 
\frac{X(\log X)^3 \log \log M}{\sqrt{q}} + X^{\frac{4}{5}}(\log X)^3 M^{\frac{1}{5}} + \sqrt{X}\sqrt{q}\sqrt{M}(\log X)^3.
\end{align}
For the second case we use $q \not\equiv 0 \; \modu p_1$ so that $\Upsilon = p_1$. Employing a similar argument as in the previous case with $(ap_1,q)=(p_1,q)=1$ we write
\begin{align} \label{auvinogradoUpsilon2}
\bigg|p_1 \alpha - \frac{p_1a}{q}\bigg| \le \frac{p_1^2}{q^2} &\implies \bigg|\alpha - \frac{a}{q}\bigg| \le \frac{p_1}{q^2}, \quad q \not\equiv 0 \; \modu p_1, \nonumber \\
&\implies \bigg|\alpha - \frac{a}{q}\bigg| \le \frac{\Upsilon}{q^2}, \quad (a,q)=1.
\end{align}
Therefore setting $\Upsilon = p_1$ and using Proposition \eqref{extendedvinogradovhypothesis} again along with \eqref{auvinogradoUpsilon2} we see that 
\begin{align} \label{vaughanseconddivision}
\mathfrak{s}_2 &:= \bigg|\sumtwo_{\substack{p_1 \le M \\ p_2 \le X/p_1 \\ q \not\equiv 0 \; \modu p_1}} \e(\alpha p_1 p_2)\bigg| \le \sum_{\substack{p_1 \le M \\ q \not\equiv 0 \; \modu p_1}} \bigg|\sum_{p_2 \le X/p_1} \e(\beta p_2)\bigg| \nonumber \\
&\ll \sum_{\substack{p_1 \le M \\ q \not\equiv 0 \; \modu q}} \Upsilon \bigg(\frac{X}{p_1 \sqrt{q}} + \bigg(\frac{X}{p_1}\bigg)^{\frac{4}{5}} + \frac{\sqrt{q}\sqrt{X}}{\sqrt{p_1}} \bigg) \bigg(\log \frac{X}{p_1}\bigg)^3 \nonumber \\
&\ll \sum_{\substack{p_1 \le M \\ q \not\equiv 0 \; \modu p_1}} \frac{X}{\sqrt{q}}\bigg(\log \frac{X}{p_1}\bigg)^3 + \sum_{\substack{p_1 \le M \\ q \not\equiv 0 \; \modu p_1}} X^{\frac{4}{5}}p_1^{\frac{1}{5}}(\log X)^3 + \sum_{\substack{p_1 \le M \\ q \not\equiv 0 \; \modu p_1}} \sqrt{X}\sqrt{q}\sqrt{p_1}(\log X)^3 \nonumber \\
&\ll \frac{XM(\log X)^3}{\sqrt{q}} + X^{\frac{4}{5}} (\log X)^3 M^{\frac{6}{5}} + \sqrt{X}\sqrt{q}M^{\frac{3}{2}}(\log X)^3. 
\end{align}
Next we note the each term on the far right side of the bound of \eqref{vaughansfirstdivision} is dominated by a corresponding term on the far right side of the bound of \eqref{vaughanseconddivision}. Therefore,
\begin{align} \label{vaughanmerge}
\Sigma_2 &\ll \frac{X(\log X)^3\log\log M}{\sqrt{q}}+X^{\frac{4}{5}}(\log X)^3\log\log M+\sqrt{X}\sqrt{q}\sqrt{M}(\log X)^3 \nonumber\\
& \quad + \frac{XM(\log X)^3}{\sqrt{q}}+X^{\frac{4}{5}}(\log X)^3M^{\frac{6}{5}}+\sqrt{X}\sqrt{q}{M^{\frac{3}{2}}}(\log X)^3
\nonumber\\
&\ll  \max \left ( \frac{XM(\log X)^3}{\sqrt{q}} , X^{\frac{4}{5}}(\log X)^3M^{\frac{6}{5}} , \sqrt{X}\sqrt{q}{M^{\frac{3}{2}}}(\log X)^3 \right).
\end{align}
Next we bound $\Sigma_4$ trivially using the prime number theorem
\begin{align}\label{s3}
\Sigma_4= \bigg|\sum_{p_1\leq M}\sum_{p_2\leq M} \e(\alpha p_1p_2)\bigg| \le \sum_{p_1 \le M} \sum_{p_2 \le M} 1 \ll \bigg(\frac{M}{\log M}\bigg)^2.
\end{align}
The term on the right-hand side of \eqref{s3} is dominated by the last term on the right-hand side of \eqref{vaughanseconddivision}. Now combining \eqref{vaughanfirstsum} and \eqref{vaughanmerge}, we deduce that
\begin{align*}
\Sigma_1+\Sigma_2 + \Sigma_3 - \Sigma_4 &\ll \max \left \{ \frac{X(\log X)^2}{M^{\frac{1}{2}}} ,
\frac{XM(\log X)^3}{\sqrt{q}} , X^{\frac{4}{5}}(\log X)^3M^{\frac{6}{5}} , \sqrt{X}\sqrt{q}{M^{\frac{3}{2}}}(\log X)^3 \right\}. 
\end{align*}
Therefore we arrive at
\begin{align*}
S_2(\alpha, X) 
&\ll  
\inf_{M} \max  \left\{\frac{X(\log X)^2}{M^{\frac{1}{2}}} 
, \frac{XM(\log X)^3}{\sqrt{q}} , X^{\frac{4}{5}}(\log X)^3M^{\frac{6}{5}} , \sqrt{X}\sqrt{q}{M^{\frac{3}{2}}}(\log X)^3\right\}\\
&=\inf_{M} \max  \left\{F(M), G_1(M), G_2(M), G_3(M)\right\}.
\end{align*}
%            \begin{color}{red}
We now use  Lemma~\ref{lem:min_max_for_minor} to minimise this expression.
The assumptions of Lemma~\ref{lem:min_max_for_minor} are clearly fulfilled and thus we are led to
\begin{align*}
S_2(\alpha, X) 
&\ll  F(\min\{M_1,M_2,M_3\})\leq F(M_1)+F(M_2)+F(M_3),    
\end{align*} 
where $M_i$ is the solution of the equation $F(M)=G_i(M)$ for $i=1,2,3$.
Solving these equations, we get the following three values
\begin{align*}
M_1=\frac{q^{\frac{1}{3}}}{(\log X)^{\frac{2}{3}}}, \quad 
M_2=\frac{X^{\frac{2}{17}}}{(\log X)^{\frac{10}{17}}} \quad \textnormal{and} \quad M_3=\frac{X^{\frac{1}{4}}}{q^{\frac{1}{4}}(\log X)^{\frac{1}{2}}}.
\end{align*}
%           \end{color}
%            \begin{color}{blue}
%			Note that by Lemma \ref{lem:min_max_for_minor}, we can say that at $M_0$ two of the four terms on the right-hand side above must be equal. Since $F(M)$ is a decreasing function and $G(M)$ is an increasing function we only need to solve for the pairs involving $F(M)$. Solving for $M$ the equations $F(M)=G_i(M)$ with $i=1,2,3$ we get the following three values
%			\begin{align*}
%				M_1=\frac{q^{\frac{1}{3}}}{(\log X)^{\frac{2}{3}}}, \quad M_2=\frac{X^{\frac{2}{17}}}{(\log X)^{\frac{10}{17}}} \quad \textnormal{and} \quad M_3=\frac{X^{\frac{1}{4}}}{q^{\frac{1}{4}}(\log X)^{\frac{1}{2}}}.
%			\end{align*}
%			Putting the values in Lemma \ref{lem:min_max_for_minor} we conclude that
%			\begin{align*}
%				S_2(\alpha,X)\ll \inf H(M)=\max\{F(M_1),F(M_2),F(M_3)\}\leq F(M_1)+F(M_2)+F(M_3).    
%			\end{align*}
%            \end{color}
The values of $F$ at $M_i$ for $i=1,2,3$ are
\begin{align*}
F(M_1)= \frac{X(\log X)^{\frac{7}{3}}}{q^{\frac{1}{6}}}, \quad
F(M_2) = X^{\frac{16}{17}}(\log X)^{\frac{39}{17}} \quad \textnormal{and} \quad
F(M_3) =X^{\frac{7}{8}}q^{\frac{1}{8}}(\log X)^{\frac{9}{4}},
\end{align*}
and hence the above equation becomes
\begin{align*}
S_2(\alpha, X) &\ll \frac{X(\log X)^2}{q^{\frac{1}{6}}(\log X)^{-\frac{1}{3}}}  +\frac{X(\log X)^2}{X^{\frac{1}{17}}(\log X)^{-\frac{5}{17}}}    +\frac{X(\log X)^2}{X^{\frac{1}{8}}q^{-\frac{1}{8}}(\log X)^{-\frac{1}{4}}}\\
&= \frac{X (\log X)^{\frac{7}{3}}}{q^{\frac{1}{6}}}  + 
X^{\frac{16}{17}}(\log X)^{\frac{39}{17}} +
X^{\frac{7}{8}}q^{\frac{1}{8}}(\log X)^{\frac{9}{4}},
\end{align*}
and the proof of the theorem is now completed.
\end{proof}

\subsection{Estimate of $\Phi_{\mathbb{P}_2}(\rho \e (\alpha))$ on $\mathfrak{m}$}
From the definition of $\Phi$ we can write
\begin{align*}
\Phi_{\mathbb{P}_2}(\rho \e (\alpha)) = \sum_{j=1}^\infty \frac{1}{j} \sum_{p_1} \sum_{p_2} e^{-p_1 p_2 j /X} \e(j p_1 p_2 \alpha).
\end{align*}
The idea is now to employ the identity
\begin{align*}
e^{-p_1 p_2 j /X} = \int_{p_1 p_2}^\infty jX^{-1}e^{-yj/X}dy
\end{align*}
in order to write the two infinite sums over primes as the following combined sum
\begin{align*}
\sum_{p_1} \sum_{p_2} e^{-p_1 p_2 j /X} \e(j p_1 p_2 \alpha) =\int_{2 \times 2}^\infty jX^{-1}e^{-yj/X} \sum_{p_1 p_2 \le y} \e(jp_1 p_2 \alpha) dy.
\end{align*}
We shall make use of the coarse but useful bound
\begin{align*}
\int_{4}^\infty jX^{-1}e^{-yj/X} \sum_{p_1 p_2 \le y} \e(jp_1 p_2 \alpha) dy\ll \int_0^\infty y j X^{-1}e^{-yk/X}dy.
\end{align*}
Upon integrating by parts we see that for any $\lambda > 0$ we have
\begin{align} \label{ibpminor}
\int_{4}^\infty y^\lambda j X^{-1} e^{-y j /X} dy \ll \bigg(\frac{X}{j}\bigg)^\lambda.
\end{align}
Let $J$ be a parameter at our disposal to be chosen later. Then
\begin{align*}
\sum_{j=J+1}^\infty \frac{1}{j} \int_4^\infty j X^{-1}e^{-y j /X} \sum_{p_1 p_2 \le y} \e(jp_1 p_2 \alpha)dy \ll \sum_{j=J+1}^\infty \frac{1}{j} \frac{X}{j} \ll \frac{X}{J}.
\end{align*}
We can summarize this by saying that for any $J \ge 1$ we have
\begin{align} \label{Phionminor}
\Phi_{\mathbb{P}_2}(\rho \e (\alpha)) &= \sum_{j=1}^J \frac{1}{j} \sum_{p_1}\sum_{p_2} e^{-p_1 p_2 j /X} \e(j p_1 p_2 \alpha)dy + O \bigg(\frac{X}{J}\bigg) \nonumber \\
&= \sum_{j=1}^J \frac{1}{j} \int_4^\infty j X^{-1}e^{-y j /X }\sum_{p_1 p_2 \le y}  \e(j p_1 p_2 \alpha) dy + O \bigg(\frac{X}{J}\bigg).
\end{align}
Equipped with the fundamental estimate to bound the Weyl sum we may now prove the following result on the behavior of $\Phi(\rho \e (\alpha))$ in $\mathfrak{m}$.
\begin{lemma} \label{minorarclemmabound}
For $\alpha \in \mathfrak{m}$ one has that
\begin{align*}
\Phi_{\mathbb{P}_2}(\rho \e (\alpha)) \ll X (\log X)^{\frac{9}{4}-\frac{A}{8}},
\end{align*}
where $A>18$ is any real number. 
\end{lemma}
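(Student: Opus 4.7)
The plan is to combine three ingredients: the truncation identity \eqref{Phionminor}, which rewrites $\Phi_{\mathbb{P}_2}(\rho \e(\alpha))$ as $\sum_{j=1}^J \frac{1}{j}\int_4^\infty jX^{-1}e^{-yj/X} S_2(j\alpha,y)\,dy + O(X/J)$; Dirichlet's approximation theorem, to produce for each $j \le J$ a rational approximation $a_j/q_j$ to $j\alpha$; and Theorem \ref{doublevinogradov}, to bound the resulting Weyl sums. First I would apply Dirichlet to $j\alpha$ with parameter $Q' = 2X/(\log X)^A$, yielding coprime integers $a_j, q_j$ with $1 \le q_j \le Q'$ and $|j\alpha - a_j/q_j| \le 1/(q_j Q') \le 1/q_j^2$, so the hypothesis of Theorem \ref{doublevinogradov} is met.

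To exploit the assumption $\alpha \in \mathfrak{m}$, I would divide by $j$ and reduce $a_j/(jq_j)$ to lowest terms $a^*/q^* = (a_j/d)/(jq_j/d)$, where $d := \gcd(a_j,j)$ (using $\gcd(a_j,q_j)=1$), so that $(a^*,q^*)=1$ and $|\alpha - a^*/q^*| \le 1/(dq^* Q')$. Were $q^* \le Q = (\log X)^A$, the minor arc hypothesis would force $|\alpha - a^*/q^*| \ge (\log X)^A/(q^* X)$, giving $d \le X/((\log X)^A Q') = 1/2$, which contradicts $d \ge 1$. Hence $q^* > (\log X)^A$, and since $q^* = jq_j/d \le jq_j$ this yields the critical lower bound $q_j > (\log X)^A/j$, while Dirichlet supplies $q_j \le 2X/(\log X)^A$.

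I would then insert the three-term bound from Theorem \ref{doublevinogradov} into \eqref{Phionminor}, evaluate the $y$-integrals via $\int_4^\infty y^\lambda(\log y)^c jX^{-1}e^{-yj/X}\,dy \ll (X/j)^\lambda (\log X)^c$ (obtained by the substitution $u=yj/X$), and sum over $j \le J$. Feeding $q_j^{-1/6} < j^{1/6}(\log X)^{-A/6}$ into the first Weyl term, $q_j^{1/8} \ll X^{1/8}(\log X)^{-A/8}$ into the third, and using the convergence of $\sum_j j^{-11/6}$, $\sum_j j^{-33/17}$, $\sum_j j^{-15/8}$ gives the three contributions
\begin{align*}
X(\log X)^{7/3 - A/6}, \quad X^{16/17}(\log X)^{39/17}, \quad X(\log X)^{9/4 - A/8}.
\end{align*}
Choosing $J = \lceil (\log X)^{(A-18)/8}\rceil$ (well-defined and larger than $1$ for $A>18$) absorbs the error $O(X/J)$ into the third contribution. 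The third term dominates the others for $A>18$: the inequality $7/3 - A/6 \le 9/4 - A/8$ reduces to $A \ge 2$, and the factor $X^{-1/17}$ beats any fixed power of $\log X$.

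The main obstacle is the tight tension between the two bounds on $q_j$: the lower bound $q_j > (\log X)^A/j$ (needed to tame $q_j^{-1/6}$ in the first Weyl term) and the upper bound $q_j \ll X/(\log X)^A$ (needed to tame $q_j^{1/8}$ in the third) are of complementary orders in $(\log X)^A$, so the Dirichlet parameter $Q' \asymp X/(\log X)^A$ is essentially forced; any smaller $Q'$ destroys the contradiction driving the minor arc lower bound, while any larger $Q'$ inflates $q_j^{1/8}$ beyond the target exponent $9/4 - A/8$.
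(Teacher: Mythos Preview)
Your proposal is correct and follows essentially the same route as the paper: truncate via \eqref{Phionminor}, apply Dirichlet to $j\alpha$ with parameter of size $X(\log X)^{-A}$, use the minor-arc hypothesis to force the reduced denominator above $(\log X)^A$ (hence $q_j>(\log X)^A/j$), feed Theorem~\ref{doublevinogradov} into the $y$-integral via \eqref{ibpminor}, and sum the convergent series $\sum j^{-11/6}$, $\sum j^{-33/17}$, $\sum j^{-15/8}$. The only substantive difference is the truncation level: the paper simply takes $J=\sqrt{X}$, which makes the $O(X/J)$ term negligibly small, whereas you take $J=\lceil(\log X)^{(A-18)/8}\rceil$ so that $X/J$ lands exactly on the target $X(\log X)^{9/4-A/8}$; either choice works and the rest of the argument is the same.
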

\begin{proof}
Fix $J$ be a parameter of our choice define the $y$-integral in \eqref{Phionminor} by
\begin{align} \label{expressionboundminor}
\mathfrak{I}(X,j):=\int_4^\infty j X^{-1}e^{-y j /X } S_2(j\alpha ,y) dy. %\sum_{p_1 p_2 \le x}  \e(j p_1 p_2 \alpha)
\end{align}
For each $j\leq J$, we employ Dirichlet's theorem (see\cite[Lemma 2.1]{vaughanbook}) to choose $a \in \Z$ and $q \in \N$ with $(a,q)=1$ such that 
\begin{align}
\bigg| j \alpha - \frac{a}{q} \bigg| \le q^{-1} X^{-1} (\log X)^A \quad \textnormal{and} \quad q < X(\log X)^{-A}.
\label{eq:lem_5_3_approx_dirichlet}
\end{align}
We now set $a_j:= a/(a,j)$ and $q_j:=jq/(a,j)$.
The definition of $\delta_q$ in \eqref{eq:def_delta_q} and \eqref{eq:lem_5_3_approx_dirichlet} imply that
%$\left|  \alpha - \frac{a_j}{q_j} \right| \le \delta_{q_j}$.
\begin{align}
\bigg|  \alpha - \frac{a_j}{q_j} \bigg| \le \delta_{q_j}.
\label{eq:lem_5_3_approx_dirichlet_2}
\end{align}
Since $\alpha \in \mathfrak{m}$, the definition of $\mathfrak{m}$ implies that $q_j > Q$, where $Q=(\log X)^A$. 
Moreover, Theorem~\ref{doublevinogradov} and \eqref{eq:lem_5_3_approx_dirichlet} imply that
\begin{align} \label{auxminorarcs01}
S_2(j\alpha, y) \ll\frac{y}{q^{\frac{1}{6}}}(\log y)^{\frac{7}{3}}+ y^{\frac{16}{17}} (\log y)^{\frac{39}{17}} + y^{\frac{7}{8}}q^{\frac{1}{8}}(\log y)^{\frac{9}{4}}.
\end{align}
%Recall that for any $\lambda >0$ \textcolor{red}{(Insert reference here)}
%\begin{align} \label{ibpminor}
%\int_{4}^\infty y^\lambda j X^{-1} e^{-y j /X} dy \ll \bigg(\frac{X}{j}\bigg)^\lambda.
%\end{align}
Integrating each of the three terms in \eqref{auxminorarcs01} and using \eqref {ibpminor} and yields
\begin{align} \label{threetemsminor}
E_1(X,j) = \int_4^\infty \frac{y}{q^{\frac{1}{6}}} (\log y)^{\frac{7}{3}} j X^{-1} e^{-yj/X} dy &\ll \frac{X}{{j{q^{1/6}}}}{\left( {\log \frac{X}{j}} \right)^{\frac{7}{3}}}, \nonumber \\
E_2(X,j) = \int_4^\infty y^{\frac{16}{17}} (\log y)^{\frac{39}{17}} j X^{-1} e^{-yj/X} dy &\ll \bigg(\frac{X}{j}\bigg)^{\frac{16}{17}} \bigg(\log \frac{X}{j} \bigg)^{\frac{39}{17}}, \nonumber \\
E_3(X,j) = \int_4^\infty y^{\frac{7}{8}}q^{\frac{1}{8}} (\log y)^{\frac{9}{4}} j X^{-1} e^{-yj/X} dy &\ll {\left( {\frac{X}{j}} \right)^{\frac{7}{8}}}{q^{1/8}}{\left( {\log \frac{X}{j}} \right)^{\frac{9}{4}}}.
\end{align}
If we now add all three terms in \eqref{threetemsminor} and use $jq\geq q_j > (\log X)^{A}$ on $E_1(X,j)$ and $q < X (\log X)^{-A}$ on $E_3(X,j)$, then we see that the integral in \eqref{expressionboundminor} is
\begin{align*} 
\mathfrak{I}(X,j)% &= E_1(X,j) + E_2(X,j) + E_3(X,j) \nonumber \\
&\ll X\frac{1}{{{j^{\frac{5}{6}}}}}{\left( {\log \frac{X}{j}} \right)^{\frac{7}{3}}}{(\log X)^{ - \frac{A}{6}}} + {\left( {\frac{X}{j}} \right)^{\frac{16}{17}}}{\left( {\log \frac{X}{j}} \right)^{\frac{39}{17}}} + \frac{X}{{{j^{\frac{7}{8}}}}}{(\log X)^{ - \frac{A}{8}}}\left( {\log \frac{X}{j}} \right)^{\frac{9}{4}}. \nonumber
\end{align*}
Next, summing over $j \in [1,J]$ we find that $\Phi_{\mathbb{P}_2}(\rho \e (\alpha))$ is
\begin{align}
&\ll
\sum\limits_{j = 1}^J \frac{1}{j}\bigg[ X\frac{1}{{{j^{\frac{5}{6}}}}}{{\left( {\log \frac{X}{j}} \right)}^{\frac{7}{3}}}{{(\log X)}^{ - \frac{A}{6}}} + {{\left( {\frac{X}{j}} \right)}^{\frac{16}{17}}}{{\left( {\log \frac{X}{j}} \right)}^{\frac{39}{17}}} + \frac{X}{{{j^{\frac{7}{8}}}}}{{(\log X)}^{ - \frac{A}{8}}}{\left( {\log \frac{X}{j}} \right)}^{\frac{9}{4}} \bigg] + O \bigg(\frac{X}{J}\bigg)\nonumber \\ 
&=
{\widetilde{S}_1}(J,X) + {\widetilde{S}_2}(J,X) + {\widetilde{S}_3}(J,X) + O \bigg(\frac{X}{J}\bigg).  \nonumber
\end{align}
Lastly,  we have for $j\leq J\leq \sqrt{X}$ that $\frac{1}{2}\log X \leq \log\frac{X}{j}\leq \log X$ and 
\begin{align*}
\widetilde{S}_1(J,X) 
= 
X{(\log X)^{ - \frac{A}{6}}}\sum\limits_{j = 1}^J {\frac{1}{{{j^{\frac{11}{6}}}}}{{\left( {\log \frac{X}{j}} \right)}^{\frac{7}{3}}}}  
%\ll
%X{(\log X)^{ - \frac{A}{6}}} {\left( {\log X} \right)}^{\frac{7}{3}}  \sum\limits_{j = 1}^J \frac{1}{{{j^{\frac{11}{6}}}}} \\
%&\ll
%X{(\log X)^{ - \frac{A}{6}}} {\left( {\log X} \right)}^{\frac{7}{3}}  \sum\limits_{j = 1}^\infty \frac{1}{{{j^{\frac{11}{6}}}}}
\ll X{(\log X)^{ - \frac{A}{6}}}(\log X)^{\frac{7}{3}}.
\end{align*}
Similarly, ${\widetilde{S}_2}(J,X) 
\ll {X^{\frac{16}{17}}}{(\log X)^{\frac{39}{17}}} $ and 
$
{\widetilde{S}_3}(J,X) \ll X{(\log X)^{ - \frac{A}{8}}}{(\log X)^{\frac{9}{4}}}
$.
Therefore
\begin{align}
\Phi_{\mathbb{P}_2}&(\rho \e (\alpha))  \ll X{(\log X)^{\frac{7}{3} - \frac{A}{6}}} + {X^{\frac{16}{17}}}{(\log X)^{\frac{39}{17}}} + X{(\log X)^{\frac{9}{4} - \frac{A}{8}}} + O \bigg(\frac{X}{J}\bigg) .
\end{align}
The result now follows by choosing $J=\sqrt{X}$ and taking $A>18$.
\end{proof}
%%%%%%%%%%%%%%%%%%%%%%%%%%%%%%%%%%%%%%%%%%%%%%%%%%%%%%%%%%%%%%%%%%%
\section{The non-principal major arcs} \label{sec:nonprincipalarcs}
%%%%%%%%%%%%%%%%%%%%%%%%%%%%%%%%%%%%%%%%%%%%%%%%%%%%%%%%%%%%%%%%%%%
Our first result is understanding the behavior of the following integral.
\begin{lemma} 
\label{lem:dirk'slemma}
Let $\gamma=\gamma_1+i\gamma_2\in\C$ with $\gamma_1>0$ with $\gamma_2 \ll \gamma_1 (\log(1/\gamma_1))^A$ for some $A>0$.
We then have as $\gamma_1\to 0$
\begin{align*}
\int_{2}^{\infty}  \frac{t\log\log t}{\log t} \exp(-\gamma t) dt  = \frac{\log\log(1/\gamma_1)}{\gamma^2\log(1/\gamma_1)}+O\left(\frac{\log\log(1/\gamma_1)}{\gamma_1^2\log^2(1/\gamma_1)}\right). 
\end{align*}
\end{lemma}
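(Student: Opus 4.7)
The approach parallels the (commented-out) proof of Lemma~\ref{lem:eq:integral_with_gamma3}, the only novelty being the complex parameter $\gamma$ in place of a real $a$. Setting $L := \log(1/\gamma_1)$ and $L' := \log L$ for brevity, the plan is to split $[2,\infty)$ into the three intervals $[2,d]$, $[d,u]$, $[u,\infty)$, with breakpoints $d = 1/(\gamma_1 L)$ and $u = L^2/\gamma_1$, chosen so that the modulus $t e^{-\gamma_1 t}|{\log\log t}/{\log t}|$ concentrates on the middle piece near $t \approx 1/\gamma_1$. All modulus estimates will transfer directly from the real case since $|e^{-\gamma t}| = e^{-\gamma_1 t}$.

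First I would dispose of the two outer intervals, which only contribute to the error. On $[2,d]$ the factor $(\log\log t)/\log t$ is uniformly bounded on $[2,\infty)$ and $|e^{-\gamma t}| \le 1$, so the contribution is $\ll \int_2^d t\,dt \ll d^2 = 1/(\gamma_1^2 L^2)$, comfortably inside the stated error. On $[u,\infty)$, I would combine the power bound $e^{-\gamma_1 t} \le C(\gamma_1 t)^{-3}$ with monotone decay of $(\log\log t)/\log t$ for large $t$ to get a contribution of order $L'/(\gamma_1^2 u \log u) \asymp L'/(\gamma_1^2 L^3)$, again absorbed.

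The main work is on $[d,u]$. Substituting $t = s/\gamma_1$, so that $s \in [1/L, L^2]$ and $|\log s| \le 2L'$, the slowly-varying factor admits the expansion
\begin{align*}
\frac{\log\log t}{\log t}
= \frac{L'}{L} + \frac{(1 - L')\log s}{L^2} + O\!\left(\frac{L'(\log s)^2}{L^3}\right),
\end{align*}
which I would integrate term by term against $t e^{-\gamma t}$. The zeroth-order piece $(L'/L)\int_d^u t e^{-\gamma t}\,dt$ produces the main term: using $\int_0^\infty t e^{-\gamma t}\,dt = 1/\gamma^2$ (valid for $\real(\gamma) > 0$) and controlling the omitted tails $[0,d]$ and $[u,\infty)$ by the same trivial bounds as above gives $L'/(\gamma^2 L) + O(L'/(\gamma_1^2 L^3))$. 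The first-order correction is evaluated by the contour rotation $s = \gamma t$, legitimate in virtue of the analyticity and decay of $s \mapsto s e^{-s}\log s$ in the right half-plane; it equals $\gamma^{-2}[(1 - C_0) - \log(\gamma/\gamma_1)]$ with $C_0$ the Euler constant. The hypothesis $\gamma_2 \ll \gamma_1 L^A$ bounds $|\log(\gamma/\gamma_1)| \ll L'$, so after multiplication by the prefactor $(1-L')/L^2$ this piece lands inside the error. The quadratic-in-$\log s$ remainder is controlled trivially by its pointwise bound times $\int_d^u t e^{-\gamma_1 t}\,dt \ll 1/\gamma_1^2$.

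The main technical obstacle is the contour rotation $s = \gamma t$ used for the first-order correction together with the passage from the hypothesis $\gamma_2 \ll \gamma_1 L^A$ to the logarithmic bound $|\log(\gamma/\gamma_1)| \ll L'$; once these are secured, the remaining manipulations follow the real-parameter template of Lemma~\ref{lem:eq:integral_with_gamma3} essentially verbatim.
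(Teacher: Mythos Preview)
Your approach is essentially the paper's: split $[2,\infty)$ into three intervals, bound the outer two trivially via $|e^{-\gamma t}|=e^{-\gamma_1 t}$, and on the middle piece replace $(\log\log t)/\log t$ by its value near $t=1/\gamma_1$. The paper takes $d=1/(\gamma_1 L^{A+1})$ (rather than $1/(\gamma_1 L)$) and evaluates $\int_d^u t e^{-\gamma t}\,dt$ directly via its antiderivative, which requires $|\gamma d|\to 0$ and hence the extra power $L^{A}$; your alternative of writing $\int_d^u=\int_0^\infty-\int_0^d-\int_u^\infty$ and bounding the tails by $d^2$ and a power bound at $u$ makes the larger $d$ permissible. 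The paper also stops at the zeroth-order approximation $(\log\log t)/\log t = L'/L + O((L')^2/L^2)$, so your finer two-term expansion and the contour rotation used to evaluate the first-order integral are extra work that the paper avoids.

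One point to flag: your assertion that the first-order piece ``lands inside the error'' is not correct as stated. That piece has size $\asymp \dfrac{L'}{L^{2}}\cdot\dfrac{L'}{|\gamma|^{2}}$, and when $\gamma_2$ is small this is $(L')^{2}/(\gamma_1^{2}L^{2})$, which is larger than the $L'/(\gamma_1^{2}L^{2})$ in the displayed $O$-term. In fact the paper's own argument (using the cruder $O((L')^{2}/L^{2})$ remainder on $[d,u]$) also only yields $O\big((\log\log(1/\gamma_1))^{2}/(\gamma_1^{2}\log^{2}(1/\gamma_1))\big)$; the statement of the lemma appears to be missing a second factor of $\log\log(1/\gamma_1)$ in the numerator. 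This weaker bound is what is actually needed and used in Lemma~\ref{lem:main_sum_exp_gamma}, so your argument goes through with that correction.
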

\begin{proof}
We split the interval $[2,\infty]$ into intervals $[2,d], [d,u]$ and $[u,\infty]$ with
\begin{align*}
d=\frac{1}{\gamma_1(\log(1/\gamma_1))^{A+1}} \quad \text{and} \quad u=\frac{(\log (1/\gamma_1))^{2}}{\gamma_1} .
\end{align*}
We now have that
\begin{align*}
\frac{\log\log t}{\log t}\geq 0 \text{ for } t\geq e \quad \text{and} \quad \lim_{t\to\infty} \frac{\log\log t}{\log t}=0.  
\end{align*}
%		Further $\frac{(\log\log t)}{\log t}$ is also bounded in the interval $[2,e]$. 
Thus there exist a constant $C>0$ such that $|\frac{\log\log t}{\log t}|\leq C$  for all $t\geq 2$. Since $\gamma_1>0$ and $|\exp(z)| =\exp(\real(z))$, the integral over $[2,d]$ can be estimated as
\begin{align*}
\left|\int_{2}^{d}  \frac{t\log\log t}{\log t} \exp(-\gamma t)  dt\right|
&\leq 
C\int_{2}^{d} t\exp(-\gamma_1 t) dt 
\leq 
C\int_{2}^{d} t\, dt \ll \frac{1}{\gamma_1^{2}(\log(1/\gamma_1))^{2A+2}}.
\end{align*}
For the integral over $[u,\infty]$, we use that $\frac{\gamma_1}{2}t\leq \exp(\frac{\gamma_1}{2}t)$ for $t\geq 0$.		
Furthermore $\frac{\log\log t}{\log t}$ is monotonically decaying for $t$ large enough. 
This gives
\begin{align*}
\left|\int_{u}^{\infty} \frac{t\log\log t}{\log t}  \exp(-\gamma t) dt\right|
&
\leq 
C\frac{\log\log u}{\log u}\frac{2}{\gamma_1}\int_{u}^{\infty} \left(\frac{\gamma_1}{2}t\right) \exp(-\gamma_1 t)dt \nonumber \\
&=
\frac{C\log\log u}{\log u}\frac{2}{\gamma_1}\int_{u}^{\infty} \exp\left(-\frac{\gamma_1}{2} t\right)dt\\
%&=
%\frac{C\log\log u}{\log u}\frac{4}{\gamma^2_1} \exp\left(-\frac{\gamma_1}{2} u\right) 
&\ll \frac{\log\log(1/\gamma_1)}{\gamma_1^2(\log(1/\gamma_1))^3}.
\end{align*}
We have used on the last line that $\gamma_1 u = \log^2(1/\gamma_1)$.
For the computation of the remaining integral, we use the observation that for $t\in[d,u]$ one has that
\begin{align*}
\frac{\log\log t}{\log t} = \frac{\log\left(\log(1/\gamma_1)+\log(\gamma_1 t)\right)}{\log(1/\gamma_1)+\log(\gamma_1 t)} 
&=
\frac{\log\log(1/\gamma_1)+\log(1+\frac{\log (\gamma_1  t)}{\log(1/\gamma_1)})}{\log(1/\gamma_1)(1-\frac{\log \gamma_1  t}{\log(1/\gamma_1)})}\\
&=
\frac{\log\log(1/\gamma_1)}{\log(1/\gamma_1)}+O\left(\frac{\log\log(1/\gamma_1)^{2}}{\log(1/\gamma_1)^{2}}\right).  
\end{align*}
This implies together with a similar bound as above that 
\begin{align*}
\int_{d}^{u} \frac{t\log\log t}{\log t} \exp(-\gamma t) dt &=
\frac{\log\log(1/\gamma_1)}{\log(1/\gamma_1)}\int_{d}^{u} t\exp(-\gamma t) dt  + O\left(\frac{\log\log(1/\gamma_1)^{2}}{\log(1/\gamma_1)^{2}}\int_{d}^{u} t\exp(-\gamma_1 t) dt\right)\\
%			=\,&
%			\frac{\log\log(1/\gamma_1)}{\log(1/\gamma_1)}\int_{d}^{u} t\exp(\gamma t) dt+ O\left(\frac{\log\log(1/\gamma_1)^{2}}{\log(1/\gamma_1)^{2}}\int_{d}^{u} t\exp(-\gamma_1 t) dt\right)\\
&=
\frac{\log\log(1/\gamma_1)}{\log(1/\gamma_1)}\int_{d}^{u} t\exp(-\gamma t) dt+ O\left(\frac{\log\log(1/\gamma_1)^{2}}{\gamma_1^2\log(1/\gamma_1)^{2}}\right).         
\end{align*}
Since $\gamma d = \log^{-1}(1/\gamma_1)$ by construction, we get
\begin{align}
\int_{d}^{u} t\exp(-\gamma t) dt
&=
\frac{\left(\gamma  d +1\right) e^{-\gamma d}}{\gamma^{2}}- \frac{\left(\gamma  u +1\right) e^{-\gamma u}}{\gamma^{2}}\nonumber\\
&=
\frac{1}{\gamma^{2}} + O\left(\frac{1}{\gamma^{2}\log(1/\gamma_1)}\right)
+O\left(\frac{\left(\gamma  u +1\right) e^{-\gamma_1 u}}{\gamma^{2}}\right).
\end{align}
This completes the proof of the lemma.
\end{proof}
We now study a finite sum over products of primes satisfying a congruence and write this sum as an integral involving the logarithmic integral $\operatorname{Li}$.
\begin{theorem}
\label{lem:Ugly_A_mod_q}
Let $q,\ell\in\N$ be given  with $(\ell,q)=1$.
We then have as $t\to\infty$
\begin{align}
A(t)
:=
\sum_{\substack{p_1 p_2\leq t\\p_1 p_2\equiv\ell\modu q}} 1
=\, &
\frac{2}{\varphi(q)}
\int_2^{\sqrt t }
(\log \log u + M +O((\log u)^{-C})\left(\operatorname{Li}(t/u) -\frac{t}{u \log (t/u) }\right) du \nonumber\\
&-\frac{\operatorname{Li}^2(\sqrt{t})}{\varphi(q)}+\frac{2\sqrt{t}}{\varphi(q)}\operatorname{Li}(\sqrt{t})  \left( {\log \log t - \log 2 + M } \right)
+ O\left(t (\log t)^{-C} \right),
\end{align}
where $C>0$ is arbitrarily large and where $\varphi$ is the Euler totient function.
\end{theorem}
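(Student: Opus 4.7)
My plan is to apply the Dirichlet hyperbola method to $A(t)$, reduce the counting problem to primes in arithmetic progressions via the Siegel--Walfisz theorem, and then convert the resulting sum over primes into the stated integral by partial summation against Mertens' refined estimate.

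First, each ordered pair $(p_1,p_2)$ with $p_1p_2 \le t$ satisfies $p_1 \le \sqrt{t}$ or $p_2 \le \sqrt{t}$ (or both), and $(\ell,q)=1$ forces $(p_i,q)=1$ for both factors. Inclusion--exclusion in the two symmetric cases yields
$$A(t) \;=\; 2\sum_{\substack{p\le\sqrt{t}\\ (p,q)=1}} \pi\!\left(\frac{t}{p};\, q,\, \ell\,\overline{p}\right)\;-\;B(t),$$
where $\overline{p}$ is the multiplicative inverse of $p$ modulo $q$ and $B(t)=\sum_{p_1,p_2\le\sqrt{t},\,p_1p_2\equiv\ell\,(q)}1$ corrects for the double-counted square.

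Next I would invoke Siegel--Walfisz in the form $\pi(y;q,a)=\operatorname{Li}(y)/\varphi(q)+O(y(\log y)^{-C})$, uniformly for $(a,q)=1$ and $q\le (\log t)^{A}$, which is the range of interest dictated by the choice of major arcs in Section~\ref{sec_st_up_arcs}. Since $t/p\ge\sqrt{t}$ whenever $p\le\sqrt{t}$, summing the error over $p\le\sqrt{t}$ produces $O\bigl(t(\log t)^{-C}\sum_{p\le\sqrt{t}}1/p\bigr) = O\bigl(t\log\log t\,(\log t)^{-C}\bigr)$, which is absorbed in $O(t(\log t)^{-C'})$. An identical treatment of $B(t)$ gives $B(t) = \operatorname{Li}^2(\sqrt{t})/\varphi(q)+O(t(\log t)^{-C})$. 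Dropping the coprimality condition $(p,q)=1$ on the outer sum costs at most $O(\omega(q)\operatorname{Li}(t/2))$, also absorbable. At this stage we arrive at
$$A(t) \;=\; \frac{2}{\varphi(q)}\sum_{p\le\sqrt{t}}\operatorname{Li}(t/p)\;-\;\frac{\operatorname{Li}^2(\sqrt{t})}{\varphi(q)}\;+\;O\!\bigl(t(\log t)^{-C}\bigr).$$

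For the final step I would evaluate $\sum_{p\le\sqrt{t}}\operatorname{Li}(t/p)=\sum_{p\le\sqrt{t}}\frac{1}{p}\cdot[\,p\operatorname{Li}(t/p)\,]$ by Abel summation against $S(u):=\sum_{p\le u}1/p=\log\log u + M + O((\log u)^{-C})$, the refinement of Mertens' theorem provided by the classical zero-free region. The crucial differentiation is $\tfrac{d}{du}[u\operatorname{Li}(t/u)]=\operatorname{Li}(t/u)-\tfrac{t}{u\log(t/u)}$, which is precisely the kernel appearing in the statement. The boundary contribution $\sqrt{t}\operatorname{Li}(\sqrt{t})\,S(\sqrt{t})$ becomes $\sqrt{t}\operatorname{Li}(\sqrt{t})(\log\log t - \log 2 + M) + O(\sqrt{t}\operatorname{Li}(\sqrt{t})(\log t)^{-C})$ after using $\log\log\sqrt{t}=\log\log t-\log 2$; multiplying by $2/\varphi(q)$ produces the third term of the claim, while the integral over $[2,\sqrt{t}]$ produces the first term.

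I expect the main obstacle to be the uniform consolidation of the error terms: the Siegel--Walfisz remainder, the Mertens remainder $O((\log u)^{-C})$ appearing inside the integrand, and the Mertens remainder at the boundary $u=\sqrt{t}$ must all collapse into a single uniform $O(t(\log t)^{-C})$ for arbitrarily large $C$. This forces one to feed the prime number theorem with error $O\bigl(x(\log x)^{-C-1}\bigr)$ into \emph{both} Siegel--Walfisz and Mertens at once, to check that $\int_{2}^{\sqrt{t}} (\log u)^{-C}(\operatorname{Li}(t/u)+\tfrac{t}{u\log(t/u)})\,du$ remains $O(t(\log t)^{-C'})$, and to handle the boundary and coprimality errors with the same exponent. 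Each piece in isolation is routine; the delicate part is the simultaneous bookkeeping with a single admissible value of $C$.
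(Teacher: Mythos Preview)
Your proposal is correct and follows the same route as the paper: the hyperbola decomposition $A(t)=2A_1(t)-A_3(t)$, Siegel--Walfisz for $\pi(t/p;q,\ell\bar p)$ with the error summed via $\sum_{p\le\sqrt t}1/p\ll\log\log t$, and then Abel summation of $\sum_{p\le\sqrt t}\operatorname{Li}(t/p)$ against Mertens' estimate using $f(u)=u\operatorname{Li}(t/u)$ and $f'(u)=\operatorname{Li}(t/u)-t/(u\log(t/u))$. The only cosmetic differences are that the paper quotes the Mertens remainder in Landau's form $O(\exp(-(\log x)^{1/14}))$ rather than $O((\log u)^{-C})$, and it glosses over the coprimality condition $(p,q)=1$ on the outer prime in exactly the same way you do.
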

\begin{proof}
Observe that we can write $A(t) = \card \mathcal{A}$ with
\begin{align*}
\mathcal{A}
=
\{ (p_1, p_2): p_1,p_2 \textnormal{ prime}, \  p_1 p_2 \le t,\textnormal{ and } p_1 p_2 \equiv \ell \modu q\}.
\end{align*}	
We now write $A(t) = A_1(t) + A_2(t) - A_3(t)$ with $A_i(t) =\card \mathcal{A}_i$ for $i=1,2,3$, where
\begin{align*}
\mathcal{A}_1 =\{(p_1, p_2)\in\mathcal{A}: p_1\leq \sqrt{t}\}, \
\mathcal{A}_2 =\{(p_1, p_2)\in\mathcal{A}: p_2\leq \sqrt{t}\} \ \text{ and } \
\mathcal{A}_3 =\{(p_1, p_2)\in\mathcal{A}: p_1,p_2\leq \sqrt{t}\}.
\end{align*}
An illustration of $A_1(t)$, $A_2(t)$ and $A_3(t)$ can be found in Figure~\ref{fig1}.
\begin{figure}[h]
\centering
\begin{tikzpicture}
\draw[thick,->] (-1,0) -- (6,0) node[anchor=north west] {$p_1$};
\draw[thick,->] (0,-1) -- (0,6) node[anchor=south east] {$p_2$};
\draw (.5,5.5) .. controls (1,1.3) and (1.3,1) .. (5.5,.5) ;
\draw[thick] (0,1.5) -- (1.7,1.5);
\draw[thick] (1.7,0) -- (1.7,1.5);
\draw (1,0) node[anchor=north west] {$\sqrt{t}$};
\draw (0,1) node[anchor=south east] {$\sqrt{t}$};

\draw[blue] (1.5,0) -- (1.5,1.72);
\draw[blue] (1.3,0) -- (1.3,1.98);
\draw[blue] (1.1,0) -- (1.1,2.37);
\draw[blue] (.9,0) -- (.9,3);
\draw[blue] (.7,0) -- (.7,3.8);
\draw[blue] (.5,0) -- (.5,5.2);
\draw[blue] (.3,0) -- (.3,5.5);
\draw[blue] (.1,0) -- (.1,5.5);

\draw[red] (0,.1) -- (5.5,.1);
\draw[red] (0,.3) -- (5.5,.3);
\draw[red] (0,.5) -- (5.2,.5);
\draw[red] (0,.7) -- (3.8,.7);
\draw[red] (0,.9) -- (3,.9);
\draw[red] (0,1.1) -- (2.37,1.1);
\draw[red] (0,1.3) -- (1.98,1.3);

\draw (5.5,1) node[anchor=north west] {$A_2$};
\draw (1.2,5.6) node[anchor=south east] {$A_1$};
\draw (2.2,2) node[anchor=east] {$A_3$};
%\draw (-2.3,1) node[anchor=north west] {$S(\gamma,M,N)$};
\draw [thin, ->] (2.3,1.7) -- (1.5,1);
\end{tikzpicture}
\caption{Hyperbola representation for the regions $A_i$ with $i=1,2,3$.} 
\label{fig1}
\end{figure}
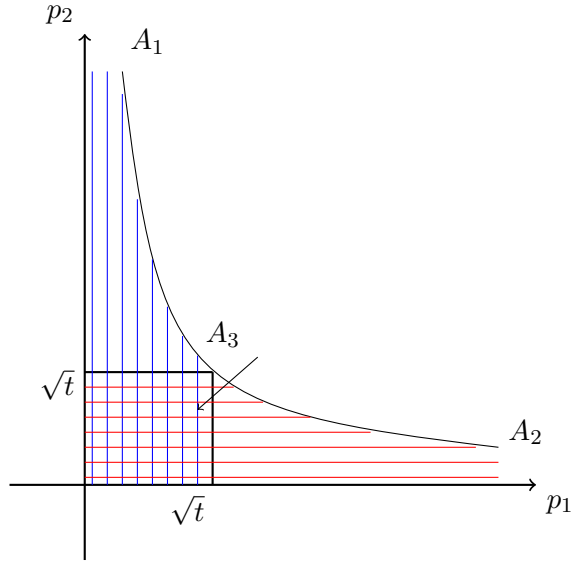

Let us start with the square $A_3(t)$. Using the definition of $\mathcal{A}_3$, we can write
\begin{align}
A_3(t) 
&= 
\sum_{\substack{p_1 \le \sqrt{t} \\ q \not\equiv 0 \modu p_1}} \card\{p_2 \le \sqrt{t}, p_2 \equiv \ell \bar p_1 \modu q \},
\label{eq:A_3_fubini}		
\end{align}
where $\bar p_1$ is the inverse of $p_1$ modulo $q$.
This inverse exists since $(p_1,q)=1$, which follows immediately from the equality $\ell = p_1p_2 +kq$ with $k\in\Z$ and the assumption $(\ell,q)=1$.	
For the summand in $A_3$ we note that $(\ell \bar p_1 ,q)=1$ and we apply Siegel-Walfisz's theorem so that
\begin{align}
\card\{p_2 \le \sqrt{t}, p_2 \equiv \ell \bar p_1 \modu q \} = \pi(\sqrt{t}; q, \ell \bar p_1) = \frac{\operatorname{Li}(\sqrt{t})}{\varphi(q)} + O\bigg(\sqrt{t} \exp \bigg(-\frac{C_N}{2} (\log \sqrt{t})^{\frac{1}{2}} \bigg)\bigg).
\label{eq:A_3_siegel_Wal}
\end{align}
Since the expression on the right-hand side of \eqref{eq:A_3_siegel_Wal} is independent of $p_1$, the prime number theorem with the standard zero-free region
and that $\operatorname{Li}(t)\sim t/\log t$ imply
\begin{align*}
A_3(t)
&= 
\left(\frac{\operatorname{Li}(\sqrt{t})}{\varphi(q)} +  O\bigg(\sqrt{t} \exp \left(-\frac{C_N}{2}(\log \sqrt{t})^{1/2} \right)\bigg)\right)  \bigg(\sum_{p_1 \le \sqrt{t}} 1\bigg) \\
&=
\left(\frac{\operatorname{Li}(\sqrt{t})}{\varphi(q)} +  O(\sqrt{t} (\log t)^{-C}) \right)(\operatorname{Li}(\sqrt{t}) +  O(\sqrt{t} \exp (-C(\log \sqrt{t})^{1/2} )))\\
&= 
\frac{\operatorname{Li}^2(\sqrt{t})}{\varphi(q)} + O(t (\log t)^{-C} ).
\end{align*}
Now we move to $A_1$. Observe that by symmetry $A_1=A_2$ and thus it is sufficient to consider $A_1$. 
We have
\begin{align*}
A_1(t) = \sum_{p_1 \le \sqrt{t}} \card \{ p_2 \le t/p_1 : p_2 \equiv \ell \bar p_1 \modu q\}.
%=
%\sum_{\substack{p_2\leq t/p_1 \\ {p_2\equiv \ell\bar{p}_1}(\bmod{q})}} \sqrt{t} \exp (-\frac{C_N}{2} (\log t)^{1/2})    
\end{align*}
The summand can be found again by Siegel-Walfisz's theorem 
\begin{align*}
\card \{ p_2 \le t/p_1 : p_2 \equiv \ell \bar p_1 \modu q\} 
= 
\pi(t/p_1; q, \ell \bar p_1) 
&= 
\frac{\operatorname{Li}(t/p_1)}{\varphi(q)} + O\bigg(\frac{t}{p_1} \exp \bigg(-\frac{C_N}{2} \bigg(\log \frac{t}{p_1} \bigg)^{\frac{1}{2}} \bigg)\bigg) \nonumber \\
&= 
\frac{\operatorname{Li}(t/p_1)}{\varphi(q)}+  O\left( \frac{t/p_1}{(\log(t/p_1))^{C+1}}\right).
\end{align*}
Using that $\log \sqrt{t} \leq \log(t/p_1) \leq \log t$, we can estimate the sum over the error term as
\begin{align*}
\sum_{p_1 \le \sqrt{t}} \frac{t/p_1}{(\log(t/p_1))^{C+1}}
\ll 
\frac{t}{(\log t)^{C+1}}\sum_{p_1 \le \sqrt{t}} \frac{1}{p_1}
\ll
\frac{t\log\log t}{(\log t)^{C+1}}
\ll
\frac{t}{(\log t)^{C}}.
\end{align*}
This implies that
\begin{align*}
A_1(t) = \frac{1}{\varphi(q)}\sum_{p_1 \le \sqrt{t}} \operatorname{Li}(t/p_1) +	O\left(\frac{t}{(\log t)^{C}}\right).
\end{align*}
We now apply Abel's summation formula to the sum in $A_1(t)$.  
We set $B(t):= \sum_{n\leq t} b_n$ with 
\begin{align*}
b(n) = \frac{{\mathbf{1}(n \in \mathbb{P})}}{n}, \text{ and} \quad f(n) = n\operatorname{Li}(t/n), \quad f'(n) = \operatorname{Li}(t/n) -\frac{t}{n \log (t/n) },
\end{align*}
where $\mathbf{1}(n \in \mathbb{P})$ is 1 when $n$ is a prime and 0 otherwise. 
Then we see that
\begin{align*}
S(t) 
&:= 
\sum_{p \le \sqrt{t}} 	\operatorname{Li}(t/p) 
= 
\sum_{n \le \sqrt{t}} b_n n\operatorname{Li}(t/n) = B(\sqrt t )f(\sqrt t ) - B(1)f(1) - \int_2^{\sqrt t } {B(u)f'(u)du}.	
\end{align*}
We now look at the three summands in this equation. 
First notice that $B(x)= 0$ for $t<2$ and thus $B(1)f(1)=0$. 
For the other two summands, we use Landau's zero-free region form of Mertens' estimate (\cite{landauhandbook})
\begin{align*}
B(x) = \sum_{n \leqslant x} b(n) = \sum_{p \leqslant x} \frac{1}{p} = \log \log x + M + O(\exp(-(\log x)^{\frac{1}{14}})).
\end{align*}
This then implies that
\begin{align*}
B(\sqrt t )f(\sqrt t ) 
&= 
( {\log \log \sqrt t  + M + O((\log t)^{-C})}) \sqrt{t}\operatorname{Li}(\sqrt{t}) \nonumber \\
&= 
\sqrt{t}\operatorname{Li}(\sqrt{t})  \left( {\log \log t - \log 2 + M } \right) + O(t(\log t)^{-C}).
\end{align*}
Finally
\begin{align*}
\int_2^{\sqrt t } {B(u)f'(u)du}
&=
\int_2^{\sqrt t }
(\log \log u + M +O(\left(\log u\right)^{-C}))\left(\operatorname{Li}(t/u) -\frac{t}{u \log (t/u) }\right) du.
%			&=
%			\int_2^{\sqrt t }
%			\left(\log \log u + M \right)\left(\operatorname{Li}(t/u) -\frac{t}{u(\log (t/u))}\right)\,du
%			+
%			O\left(\frac{t}{\log t}\int_2^{\sqrt t } \frac{t}{u(\log u)^{C+1}} \right)\\
%&=
%\int_2^{\sqrt t }
%\left(\log \log u + M \right)\left(\operatorname{Li}(t/u) -\frac{t}{u \log (t/u) }\right) du
%+ O(t(\log t)^{-C}).
\end{align*}
Putting these pieces back together ends the proof.
\end{proof}
We can apply our two previous lemmas to find the asymptotic of an infinite exponential sum over a product of primes that satisfies the congruence of our previous lemma.
\begin{lemma} \label{lem:main_sum_exp_gamma}
Let $\gamma = \gamma_1+i\gamma_2$ with $\gamma_1 >0$ and $\gamma_2 \ll \gamma_1 (\log(1/\gamma_1))^A$ for some $A>0$. 
Moreover let $q\in\N$ with $q \ll (\log(1/\gamma))^A$ and define
\begin{align*}
U(\gamma, \ell, q) := \sum_{p_1 p_2 \equiv \ell \bmod q} \exp(-p_1 p_2 \gamma), 
\label{eq:def_U_gamma_ell_q}   
\end{align*}
with $(\ell,q)=1$. Then one has that
\begin{align*}
U(\gamma, \ell, q)
=
\frac{2}{\varphi(q)}\frac{\log\log(1/\gamma)+M}{\gamma\log(1/\gamma)}
+O\bigg(\frac{1}{\varphi(q)\gamma_1}\bigg(\frac{\log\log(1/\gamma_1)}{\log(1/\gamma_1)}\bigg)^2\bigg),   
\end{align*}
where $\varphi(q)$ is the Euler totient function.
\end{lemma}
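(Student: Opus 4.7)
The strategy is to reduce the exponential sum to a Riemann–Stieltjes integral against the counting function $A(t)$ supplied by Theorem~\ref{lem:Ugly_A_mod_q}, and then invoke Lemma~\ref{lem:dirk'slemma} to evaluate the resulting real integral. Concretely, I would write
\begin{align*}
U(\gamma,\ell,q) = \int_{2^-}^\infty \exp(-\gamma t)\, dA(t) = \gamma \int_2^\infty A(t) \exp(-\gamma t)\, dt,
\end{align*}
the second equality being integration by parts; boundary contributions vanish since $\gamma_1 > 0$ and $A(t) \ll t\log\log t/\log t$.

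The next step is to simplify $A(t)$ as given in Theorem~\ref{lem:Ugly_A_mod_q} into a form compatible with Lemma~\ref{lem:dirk'slemma}. The two boundary expressions $-\operatorname{Li}^2(\sqrt t)/\varphi(q)$ and $2\sqrt t\operatorname{Li}(\sqrt t)(\log\log t - \log 2 + M)/\varphi(q)$ are each $O\bigl(t(\log\log t)/\log^2 t\bigr)$ via $\operatorname{Li}(x)\sim x/\log x$, and thus after integration against $\gamma\exp(-\gamma t)$ contribute only to the claimed error. For the main double integral in $A(t)$, partial integration on $\operatorname{Li}$ gives $\operatorname{Li}(t/u) - t/(u\log(t/u)) = t/(u\log^2(t/u)) + O(t/(u\log^3(t/u)))$, and then evaluating $\int_2^{\sqrt t}(\log\log u + M)/(u\log^2(t/u))\,du$ (use $\log(t/u) = \log t - \log u$ and the substitution $v = \log u/\log t$) reduces the double integral to
\begin{align*}
\frac{2}{\varphi(q)}\,\frac{t(\log\log t + M)}{\log t}\bigl(1 + o(1)\bigr) + \text{admissible errors},
\end{align*}
thereby recovering essentially the PNT form for semiprimes in residue classes predicted by Lemma~\ref{lem:PNT_two_primes} with the sharper constant $M$.

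Finally, I would substitute this expansion into $\gamma\int_2^\infty A(t)\exp(-\gamma t)\,dt$ and apply Lemma~\ref{lem:dirk'slemma} to $\int_2^\infty (t\log\log t/\log t)\exp(-\gamma t)\,dt$, obtaining the leading piece $\log\log(1/\gamma_1)/(\gamma^2\log(1/\gamma_1))$. A parallel but simpler variant of Lemma~\ref{lem:dirk'slemma} (splitting $[2,\infty)$ into the same three pieces $[2,d]$, $[d,u]$, $[u,\infty)$) yields $\int_2^\infty (t/\log t)\exp(-\gamma t)\,dt = 1/(\gamma^2\log(1/\gamma_1)) + O(1/(\gamma_1^2\log^2(1/\gamma_1)))$ and accounts for the $M$-constant. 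Multiplying by the prefactor $2\gamma/\varphi(q)$ produces exactly $\frac{2}{\varphi(q)}\frac{\log\log(1/\gamma)+M}{\gamma\log(1/\gamma)}$ with the stated error.

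The main obstacle is bookkeeping: one must track the $O((\log u)^{-C})$ inside the integrand of $A(t)$ and the overall $O(t(\log t)^{-C})$ from Theorem~\ref{lem:Ugly_A_mod_q}, verify that each contributes to the error $\varphi(q)^{-1}\gamma_1^{-1}(\log\log(1/\gamma_1)/\log(1/\gamma_1))^2$, and handle the discrepancy between $\log(1/\gamma)$ and $\log(1/\gamma_1)$. Here the hypothesis $\gamma_2 \ll \gamma_1(\log(1/\gamma_1))^A$ is essential: it guarantees $|\gamma|/\gamma_1$ is at most a polylogarithm and so $\log(1/\gamma)= \log(1/\gamma_1) + O(\log\log(1/\gamma_1))$, which keeps all resulting error factors within the advertised bound. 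The technically fiddliest step is the reduction of the $\operatorname{Li}$-integral in $A(t)$ to the compact form $t(\log\log t + M)/\log t$, as this is where the constant $M$ must emerge cleanly from the Mertens asymptotic for $\sum_{p\leq \sqrt t} 1/p$.
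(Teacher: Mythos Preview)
Your overall strategy---Abel summation to get $U=\gamma\int_2^\infty A(t)e^{-\gamma t}\,dt$, then feed in Theorem~\ref{lem:Ugly_A_mod_q} and finish with Lemma~\ref{lem:dirk'slemma}---is exactly the paper's approach. But there is a genuine error in your simplification of $A(t)$: you assert that the boundary term
\[
\frac{2\sqrt t}{\varphi(q)}\,\operatorname{Li}(\sqrt t)\bigl(\log\log t-\log 2+M\bigr)
\]
is $O\bigl(t\log\log t/\log^2 t\bigr)$. Since $\operatorname{Li}(\sqrt t)\sim 2\sqrt t/\log t$, this term is in fact $\sim 4t\log\log t/\bigl(\varphi(q)\log t\bigr)$, i.e.\ of the \emph{same} order as the desired main term, not one logarithm smaller. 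Consequently your claim that the double integral alone already produces $\tfrac{2}{\varphi(q)}\,t(\log\log t+M)/\log t$ is also off: a careful evaluation of $\int_2^{\sqrt t}(\log\log u+M)\,t/(u\log^2(t/u))\,du$ gives $(\log\log t-2\log 2+M)/\log t$, and it is only after combining this (with the correct sign from the Abel summation in the proof of Theorem~\ref{lem:Ugly_A_mod_q}) with the boundary term that the two $\log 2$ contributions cancel and one recovers $\tfrac{2}{\varphi(q)}\,t(\log\log t+M)/\log t$.

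The paper sidesteps this bookkeeping by integrating by parts once more, writing $\gamma\int D(t)e^{-\gamma t}\,dt=\int D'(t)e^{-\gamma t}\,dt+O(1)$ and then computing $D'(t)$ directly; the Leibniz rule for differentiating $\int_2^{\sqrt t}\cdots\,du$ automatically merges the boundary contribution with the integral, and $D'(t)$ simplifies cleanly to $\tfrac{2}{\varphi(q)}(\log\log t+M)/\log t$ plus an admissible error. Your route is salvageable, but you must keep the boundary term at leading order and verify the cancellation explicitly rather than discarding it.
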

\begin{proof}
First, we apply Abel's summation to $U(\gamma, \ell, q)$.
For this, we define $A(t)= \sum_{n\leq t} a_n$ with 
\begin{align}
a_n 
= 
\begin{cases}
2, \quad &\mbox{if $n=p_1p_2$ with $p_1\neq p_2$ and $n \equiv \ell \modu q$,} \\
1, \quad &\mbox{if $n=p^2$ and $n \equiv \ell \modu q$,} \\
0, \quad &\mbox{otherwise.} 
\end{cases}
\end{align}
Note that this $A(t)$ agrees with $A(t)$ in Theorem~\ref{lem:Ugly_A_mod_q}.
Moreover, set $f(n) = \exp(-\gamma n)$. We get
\begin{align}
U(\gamma, \ell, q) 
=
\sum_{n\leq t} a_n f(n)
= 
A(x)f(x)\big|_{x=2}^{\infty} - \int_2^\infty A(t) f'(t)dt =  \gamma \int_2^\infty A(t) \exp(-\gamma t)dt.
\label{eq:U_with_A}
\end{align}
We used in the last equality that $A(t) = 0$ for $t\leq 2$ and $A(t)\leq 2t$ otherwise.
Inserting Theorem~\ref{lem:Ugly_A_mod_q}, we see that
\begin{align*}
U(\gamma, \ell, q)
=
\gamma \int_2^\infty D(t) \exp(-\gamma t)dt
+ O\left(\gamma \int_2^\infty t (\log t)^{-C}\exp(-\gamma t)dt \right)
\end{align*}
where $D$ is given by
\begin{align*}
D(t)
=\, &
\frac{2}{\varphi(q)}
\int_2^{\sqrt t }
\big(\log \log u + M +O((\log u)^{-C})\big)\left(\frac{t}{u \log (t/u) }-\operatorname{Li}(t/u)\right) du \\
&-\frac{\operatorname{Li}^2(\sqrt{t})}{\varphi(q)}+\frac{2\sqrt{t}}{\varphi(q)}\operatorname{Li}(\sqrt{t})  \left( {\log \log t - \log 2 + M } \right).
\end{align*}
Using Lemma~\ref{lem:eq:integral_with_gamma3} with $C> 2A+2$, the assumption on $\gamma_2$ and that $\varphi(q)\leq q$, we get 
\begin{align*}
\left|\gamma \int_2^\infty t (\log t)^{-C}\exp(-\gamma t)\,dt\right|
&\leq
|\gamma| \int_2^\infty t (\log t)^{-C}\exp(-\gamma_1 t)\,dt
\ll
|\gamma| \frac{1}{\gamma_1^2 (\log(1/\gamma_1))^C}\\
&\ll
\frac{2\gamma_1 (\log(1/\gamma_1))^A}{\gamma_1^2 (\log(1/\gamma_1))^{2A+2}}
\ll
\frac{1}{\varphi(q) \gamma_1 (\log(1/\gamma_1))^2}.
\end{align*}
This implies that
\begin{align*}
U(\gamma, \ell, q)
=
\gamma \int_2^\infty D(t) \exp(-\gamma t)dt
+O\left(\frac{1}{\varphi(q)\gamma_1 (\log(1/\gamma_1))^2}\right).
\end{align*}
Since $\operatorname{Li}(t) = t/\log t + O(t/(\log t)^2)$, we have 
\begin{align*}
D(t)
\ll 
\int_2^{\sqrt t }
\left(\log \log t + M + O((\log u)^{-C})\right)\frac{tdu}{u(\log (t/u))^2} +\frac{t \log\log t}{\log t}
\ll
\frac{t\log\log t}{\log t}
\end{align*}
Thus, we get with partial integration that
\begin{align*}
\gamma \int_2^\infty D(t) \exp(-\gamma t)dt
=
\int_2^\infty D'(t) \exp(-\gamma t)dt +O(1).
\end{align*}		
For the derivative of $D$ we have
\begin{align*}
D'(t)
&=
\frac{2}{\varphi(q)}\int_2^{\sqrt t }
\left(\log \log u + M + O((\log u)^{-C})\right)\frac{\partial}{\partial t}\left(\frac{t}{u \log (t/u) } -\operatorname{Li}(t/u)\right) du\\
&\quad +
\frac{2}{\varphi(q)2\sqrt{t}}\left(\log \log \sqrt{t} + M+O((\log \sqrt{t})^{-C}))\right)\left( \frac{\sqrt{t}}{\log\sqrt{t}}-\operatorname{Li}(\sqrt{t})\right)
-
\frac{2 \operatorname{Li}(\sqrt{t})}{\varphi(q)\sqrt{t}\log t} \\
&\quad +
\frac{1}{\varphi(q)}\frac{\operatorname{Li}(\sqrt{t}) (\log\log t-\log 2 + M)}{\sqrt{t}} 
+	\frac{2}{\varphi(q)}\frac{\operatorname{Li}(\sqrt{t})}{\sqrt{t} \log t} +\frac{2}{\varphi(q)}\frac{\log\log t-\log 2+ M}{\log t}\\
&=
-\frac{2}{\varphi(q)}\int_2^{\sqrt t }
\frac{\log \log u + M + O((\log u)^{-C})}{u\log^2(t/u)} du \nonumber \\
&\quad +\frac{4}{\varphi(q)}\frac{\log\log t-\log 2+ M}{\log t}+ O\left(\frac{1}{\varphi(q)(\log t)^{C+2}}\right)
\\
&=
\frac{2}{\varphi(q)}\left( \frac{\log(t/2)\log\log(t/2) + \log 2 (\log\log 2)}{\log(t/2) \log t}  - \frac{\log\log \sqrt{t} }{\log\sqrt{t}}+ 	\frac{M}{\log(t/2)} -\frac{M}{\log \sqrt{t}}\right) \\
&\quad +\frac{4}{\varphi(q)}\frac{\log\log t-\log 2+ M}{\log t}+ O\left(\frac{\log\log t}{\varphi(q)(\log t)^2}\right)\\
&=
\frac{2}{\varphi(q)}\frac{\log\log t+ M}{\log t}+ O\left(\frac{\log\log t}{\varphi(q)(\log t)^2}\right).
\end{align*}	
We thus arrive at
\begin{align*}
U(\gamma, \ell, q)	
=
\int_2^\infty \left(	\frac{2}{\varphi(q)}\frac{\log\log t+ M}{\log t}+ O\left(\frac{\log\log t}{\varphi(q)(\log t)^2}\right)\right)e^{-\gamma t}dt
+O\left(\frac{1}{\varphi(q)\gamma_1 (\log(1/\gamma_1))^2}\right).
\end{align*}
Applying a small variation of Lemma~\ref{lem:dirk'slemma} finally completes the proof.	
\end{proof}
Now we can now employ Lemma~\ref{lem:main_sum_exp_gamma} to write an asymptotic expression for $\Phi_{\mathbb{P}_2}(\rho \e(\alpha))$ involving the Ramanujan sum $\sum_{\ell=1, (\ell,q)=1}^{q} \e(\frac{a\ell}{q})$, which is a key step in the bound we are looking for for the non-principal major arcs.
\begin{lemma}\label{lemma5.1}
Let $\alpha\in\R$ and $A>0$ be given.
Further, let $a\in\Z$, $q\in\N$ with 
\begin{align*}
(a,q)=1, \quad q\leq (\log X)^A \quad \text{ and } \quad \left|\alpha - \frac{a}{q}\right|\leq q^{-1}X^{-1}(\log X)^A.
\end{align*}
Then there exists $X_0(A)$ such that we have for all $X>X_0(A)$ 
\begin{align}
\Phi_{\mathbb{P}_2}(\rho \e(\alpha))
=
2X\frac{\log\log X}{(1-2\pi i (\alpha-\frac{a}{q}) X)\log X}\sum_{j\leq\sqrt{X}}\frac{S^{*}(q_j,a_j)}{j^2\varphi(q_j)}+O\bigg(\frac{X \log \log \log\log X}{\log X}\bigg),
\label{eq:Phi_on_major_pain}
\end{align}
where $S^*$ is defined by
\begin{align*}
S^{*}(q,a):=\sum_{\substack{1 \le \ell \le q\\ (\ell,q)=1}} \e\left(\frac{a\ell}{q}\right)% \quad \textnormal{and} \quad q_j=q/(q,j)
\end{align*}
and $q_j=q/(q,j)$ as well as $a_j=aj/(q,j)$.
\end{lemma}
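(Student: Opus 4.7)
The plan is to expand $\Phi_{\mathbb{P}_2}(\rho\e(\alpha))$ as a double sum over primes, partition it by the residue class of $p_1p_2$ modulo $q_j$, and apply Lemma~\ref{lem:main_sum_exp_gamma} to each coprime class. Writing $\beta=\alpha-a/q$ and $\gamma_j=j/X-2\pi ij\beta$, the relation $ja/q\equiv a_j/q_j\pmod{1}$ with $(a_j,q_j)=1$ (a consequence of $(a,q)=1$) factors $\e(jp_1p_2\alpha)=\e(a_jp_1p_2/q_j)\,\e(jp_1p_2\beta)$, so
\begin{align*}
\Phi_{\mathbb{P}_2}(\rho\e(\alpha))=\sum_{j=1}^{\infty}\frac{1}{j}\sum_{p_1,p_2\in\Pri}\e\!\left(\frac{a_jp_1p_2}{q_j}\right)\exp(-p_1p_2\gamma_j).
\end{align*}
A tail bound in the spirit of Section~\ref{sec:minorarcs} truncates the $j$-sum at $j\le\sqrt X$ with error $O(\sqrt X)$, which is absorbed by the stated error bound.

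For each $j\le\sqrt X$, I would split the inner sum by the residue class $\ell$ of $p_1p_2$ modulo $q_j$. On the coprime classes $(\ell,q_j)=1$ the inner sum equals $U(\gamma_j,\ell,q_j)$, and Lemma~\ref{lem:main_sum_exp_gamma} applies: its hypotheses $|\gamma_{j,2}|\ll\gamma_{j,1}(\log(1/\gamma_{j,1}))^A$ and $q_j\ll(\log(1/\gamma_j))^A$ follow routinely from $|\beta|\le q^{-1}X^{-1}(\log X)^A$, $j\le\sqrt X$ and $q_j\le q\le(\log X)^A$. Summing with the phases $\e(a_j\ell/q_j)$ and using $|S^*(q_j,a_j)|\le\varphi(q_j)$ to bound the accumulated error assembles the Ramanujan sum and gives
\begin{align*}
\sum_{(\ell,q_j)=1}\e\!\left(\tfrac{a_j\ell}{q_j}\right)U(\gamma_j,\ell,q_j)=\frac{2\,S^*(q_j,a_j)}{\varphi(q_j)}\cdot\frac{\log\log(1/\gamma_j)+M}{\gamma_j\log(1/\gamma_j)}+O\!\left(\frac{X}{j}\!\left(\frac{\log\log X}{\log X}\right)^{\!2}\right).
\end{align*}
For the non-coprime classes $(\ell,q_j)>1$, every admissible pair must have $p_1=p$ or $p_2=p$ for some prime $p\mid q_j$; a PNT-style Abel summation gives $\sum_{p'\in\Pri}e^{-pp'j/X}\ll X/(pj\log X)$, so the total contribution of these residues is bounded by $(X/(j\log X))\sum_{p\mid q_j}1/p$.

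The final step is to reduce $\log(1/\gamma_j)$ and $\log\log(1/\gamma_j)$ to $\log X$ and $\log\log X$ respectively. Writing $\gamma_j=(j/X)(1-2\pi i\beta X)$ and using $|\log(1-2\pi i\beta X)|=O(\log\log X)$ gives $\log(1/\gamma_j)=\log(X/j)+O(\log\log X)$ and hence $\log\log(1/\gamma_j)=\log\log X+O(\log\log X/\log X)$; pulling the common $(1-2\pi i\beta X)^{-1}\log\log X/\log X$ factor outside the $j$-sum collapses the main-term pieces into $\sum_{j\le\sqrt X}S^*(q_j,a_j)/(j^2\varphi(q_j))$ at the cost of at most $O(X\log\log X/\log^2 X)$ (the replacement of $\log(X/j)$ by $\log X$ is controlled by $\sum_j\log j/j^2<\infty$). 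Combining this with the Lemma~\ref{lem:main_sum_exp_gamma} error and the non-coprime-residue contribution yields the stated formula, with the $M$-term absorbed since $M=O(1)=O(\log\log\log\log X)$. The principal obstacle is sharpening the non-coprime contribution to size $X\log\log\log\log X/\log X$; this hinges on the elementary bound $\sum_{p\mid n}1/p\ll\log\log\log n$, valid because $n$ has at most $O(\log n/\log\log n)$ distinct prime factors and the primorial inequality forces the largest of them to be $O(\log n)$, which when applied to $n=q_j\le(\log X)^A$ yields $\log\log\log\log X+O(1)$. A secondary subtlety is ensuring that the complex-analytic correction $-\log(1-2\pi i\beta X)$ appearing inside $\log(1/\gamma_j)$ is absorbed cleanly into the stated $(1-2\pi i\beta X)^{-1}$ factor without producing spurious main-term corrections.
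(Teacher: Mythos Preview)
Your proposal is correct and follows essentially the same route as the paper. The only cosmetic difference is that the paper splits the inner double sum according to the four cases $p_1\mid q_j$ or not, $p_2\mid q_j$ or not (their sets $\mathcal{P}_1,\dots,\mathcal{P}_4$), whereas you split by the residue class of $p_1p_2\bmod q_j$ and then separate coprime from non-coprime classes; the two decompositions coincide, and both reduce the coprime part to $U(\gamma_j,\ell,q_j)$ via Lemma~\ref{lem:main_sum_exp_gamma} and bound the non-coprime part through $\sum_{p\mid q_j}1/p\ll\log\log\log\log X$ exactly as you outline.
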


\begin{proof}
We define $\gamma = \frac{1}{X}- 2\pi i \beta$ with $\beta=\alpha-\frac{a}{q}$.
Then \eqref{Phionminor} with $J=\sqrt{X}$ implies that 	
\begin{align}\label{firstexpression}
\Phi_{\mathbb{P}_2}\left(\rho\e\left(\alpha\right)\right)
&=
\Phi_{\mathbb{P}_2}\left(\rho\e\left(\frac{a}{q}+\beta\right)\right)
=
\sum_{j=1}^{\sqrt{X}}\frac{1}{j} \sum_{p_1} \sum_{p_2} \e\left(\frac{ajp_1p_2}{q}\right)\exp(-jp_1p_2\gamma)+O(X^{1/2}).  %\nonumber\\
%&=
%\sum_{j=1}^{\sqrt{X}}\frac{1}{j} \sum_{p_1} \sum_{p_2} \e\left(\frac{a_jp_1p_2}{q_j}\right)\exp(-jp_1p_2\gamma)+O(X^{1/2}).%\nonumber\\
\end{align}
We now split the sum over $p_1$ and $p_2$ into the following four cases
\begin{align*}
\mathcal{P}_1 &=\{(p_1,p_2);\, \ q_j \not \equiv 0 \modu p_1 \ \text{ and } q_j \not \equiv 0 \modu p_2 \}, \nonumber \\
\mathcal{P}_2 &=\{(p_1,p_2);\, \ q_j \equiv 0 \modu p_1 \ \text{ and } q_j \equiv 0 \modu p_2 \}, \nonumber \\
\mathcal{P}_3
&=\{(p_1,p_2);\, \ q_j \equiv 0 \modu p_1 \ \text{ and } q_j \not \equiv 0 \modu p_2 \}, \nonumber \\
\mathcal{P}_4 &=\{(p_1,p_2);\, \ q_j \not \equiv 0 \modu p_1 \ \text{ and } q_j \equiv 0 \modu p_2 \},
\end{align*}
and for $1  \leq k\leq 4$ we define the following sum
\begin{align*}
S_k:=\sum_{\mathcal{P}_k}\e\left(\frac{a_jp_1p_2}{q_j}\right)\exp(-jp_1p_2\gamma).
\end{align*}
We will see that the main contribution comes form $S_1$, but first, we look at the other sums.
Since the number of prime factors of $q_j$ is $O(\log(q_j))$, we get that $S_2 \ll (\log q_j)^2$.
Further, by symmetry $S_3=S_4$. 
Bounding each summand trivially and using again that the number of prime factors of $q_j$ is $O(\log(q_j))$,
we deduce
\begin{align}
S_3
&\ll
\sum_{q_j \equiv 0 \modu p_1}\sum_{q_j \not \equiv 0 \modu p_2}\exp(-jp_1p_2\gamma_1)
=
\sum_{q_j \equiv 0 \modu p_1}\sum_{p_2}\exp(-jp_1p_2\gamma_1) +O(\log^2(q_j)).
\label{eq:S3_ugly_sum}
%	&=
%	\sum_{p_1|q_j} \frac{1}{j\gamma_1p_1 \log(1/(j\gamma_1p_1))} +O(\log^2(q_j))
%	\ll \frac{X\log(q_j)}{j \log X} +O(\log^2(q_j)).
\end{align}
We now have 
\begin{align}
\sum_{p_2}\exp(-jp_1p_2\gamma_1)
=
\frac{1	}{jp_1\gamma_1\log(jp_1\gamma_1)} + 
O\left(\frac{1}{jp_1\gamma_1}\frac{\log\log(1/jp_1\gamma_1)}{\left(\log(1/jp_1\gamma_1)\right)^2}\right)
\ll
\frac{X}{jp_1\log X}.   
\label{eq:ugly_sum_in_one_prime_case}  
\end{align}
The proof of \eqref{eq:ugly_sum_in_one_prime_case} is similar to the proof of Lemma~\ref{lem:main_sum_exp_gamma}, but much less involved and can also be found in \cite[p. 11-12]{gafniprimepowers}.
We thus do not state the proof of \eqref{eq:ugly_sum_in_one_prime_case}.  
Inserting \eqref{eq:ugly_sum_in_one_prime_case} into \eqref{eq:S3_ugly_sum} gives
\begin{align*}
S_3
&\ll
\sum_{q_j \equiv 0 \modu p_1}\sum_{q_j \not\equiv 0 \modu p_2}\exp(-jp_1p_2\gamma_1)
=
\frac{X}{j\log X}\sum_{q_j \equiv 0 \modu p_1}\frac{1}{p_1}   +O((\log q_j)^2).
\end{align*}
Denote by $m$ the number of distinct prime factors of $q_j$. Then Mertens' theorem implies
\begin{align*}
\sum_{\substack{p_1 \\ q_j \equiv 0 \modu p_1}}\frac{1}{p_1}
\leq 
\sum_{p\leq m} \frac{1}{p}
= 
\log\log m + M + o(1)
=
O(\log\log m).
\end{align*}
Since $q_j$ has at most $O(\log q_j)$ primes factors and $q_j \ll (\log X)^A$, we deduce that
\begin{align*}
S_3
&\ll
\frac{X\log\log\log\log X }{j\log X} + O((\log q_j )^2).
\end{align*}
Inserting this as well as the bound for $S_2$ into \eqref{firstexpression} we get 
\begin{align*}
\Phi_{\mathbb{P}_2}\left(\rho\e\left(\alpha\right)\right)
&=
\sum_{j=1}^{\sqrt{X}}\frac{1}{j} \sum_{q_j \not \equiv 0 \modu p_1} \sum_{q_j \not \equiv 0 \modu p_2} 	 \e\left(\frac{a_jp_1p_2}{q_j}\right)\exp(-jp_1p_2\gamma)+O\bigg(\frac{X \log \log \log\log X}{\log X}\bigg)\nonumber\\
&=
\sum_{j=1}^{\sqrt{X}}\frac{1}{j}\bigg\{\sum_{\substack{\ell = 1\\ (\ell,q_j)=1}}^{q_j} \e\left(\frac{a_j\ell}{q_j}\right)\sum_{p_1 p_2\equiv\ell\bmod q_j} e^{-j p_1 p_2\gamma} \bigg\} 
+O\bigg(\frac{X \log \log \log\log X}{\log X}\bigg)\nonumber\\
&=
\sum_{j=1}^{\sqrt{X}}\frac{1}{j}\bigg\{\sum_{\substack{\ell = 1\\ (\ell,q_j)=1}}^{q_j} \e\left(\frac{a_j\ell}{q_j}\right)U(\gamma, \ell, q) \bigg\}  
+O\bigg(\frac{X \log \log \log\log X}{\log X}\bigg), 
\end{align*}
with $U(\gamma, \ell, q)$ as in \eqref{eq:def_U_gamma_ell_q}. 
Since the leading term of $U(\gamma, \ell, q)$ in Lemma~\ref{lem:main_sum_exp_gamma} does not dependent on $\ell$, we get
\begin{align*}
\sum_{j=1}^{\sqrt{X}}\frac{1}{j}\bigg\{\sum_{\substack{\ell = 1\\ (\ell,q_j)=1}}^{q_j} \e\left(\frac{a_j\ell}{q_j}\right)U(\gamma, \ell, q) \bigg\}  
&=
\sum_{j=1}^{\sqrt{X}}\frac{1}{j^2} 	S^{*}(q_j,a_j) 	\frac{2}{\varphi(q_j)}\frac{\log\log(1/(j\gamma))+M}{\gamma\log(1/(j\gamma))}\\
&\quad +
\sum_{j=1}^{\sqrt{X}}\frac{1}{j^2} S^{*}(q_j,a_j)  O\bigg(\frac{1}{\varphi(q_j)\gamma_1}\bigg(\frac{\log\log(1/(j\gamma_1))}{\log(1/(j\gamma_1))}\bigg)^2\bigg).  
\end{align*}
Since $| S^{*}(q_j,a_j) |\leq \varphi(q_j)$ and $1/\gamma_1 =X$ and $\sqrt{X}\leq X/j\leq X$, we get
\begin{align}\label{eq:needs_a_good_name}
\Phi_{\mathbb{P}_2}\left(\rho\e\left(\alpha\right)\right)
&=
\sum_{j=1}^{\sqrt{X}}\frac{1}{j^2} 	S^{*}(q_j,a_j) 	\frac{2}{\varphi(q_j)}\frac{\log\log(1/(j\gamma))+M}{\gamma\log(1/(j\gamma))} \\ 
&\quad +
O\bigg(X\bigg(\frac{\log\log X}{\log X}\bigg)^2\sum_{j=1}^{\sqrt{X}}\frac{1}{j^2}\bigg) + O\bigg(\frac{X \log \log \log\log X}{\log X}\bigg). \nonumber
\end{align}
Furthermore, we have
\begin{align*}
\frac{1}{\log(1/(j\gamma))}
&=
\frac{1}{\log(\frac{1}{j(1/X-2\pi i \beta)})}
=
\frac{1}{\log X -\log j -\log(1-2\pi i X\beta)}\\
&=
\frac{1}{\log X} \frac{1}{1 -\frac{\log j +\log(1-2\pi i X\beta)}{\log X}}
=
\frac{1}{\log X} +O\left( \frac{\log j +\log|1-2\pi i X\beta|}{(\log X^2)}  \right)\\
&=
\frac{1}{\log X} +O\left( \frac{\log j +\log\log X}{(\log X)^2}  \right)
\end{align*}
as well as
\begin{align*}
\log\log(1/(j\gamma))
=
\log\log X +O\left(  \frac{\log j +\log\log X}{\log X}  \right).
\end{align*}
Inserting these two identities into \eqref{eq:needs_a_good_name} completes the proof.
\end{proof}
\begin{lemma}
\label{lem:Vaugah_constanot_major}
One has that
\begin{align}
\sum_{j\leq\sqrt{X}}\frac{S^{*}(q_j,a_j)}{j^2\varphi(q_j)}
=
\zeta(2)\frac{\prod_{p|q}(-p)}{q^2}
+O(X^{-1/2})
\label{eq:Phi_on_major_pain_less}
\end{align}
with $q_j$ and $S^{*}(q_j,a_j)$ as in Lemma \textnormal{\ref{lemma5.1}}.
\end{lemma}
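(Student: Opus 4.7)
The plan is to evaluate the Ramanujan sum $S^{*}(q_j,a_j)$ in closed form, extend the finite sum to an infinite Dirichlet-type sum with negligible tail, and then exploit multiplicativity to obtain the product over primes dividing $q$.

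First I would verify that $(a_j,q_j)=1$. Writing $d=(j,q)$, $j=dm$, and $q=dq_j$, we have $a_j=aj/d=am$ with $(m,q_j)=1$ by construction; since $(a,q)=1$ implies $(a,q_j)=1$, we conclude $(a_j,q_j)=1$. The standard identity for the Ramanujan sum
\begin{align*}
c_q(a)=\sum_{\substack{1\le\ell\le q\\(\ell,q)=1}}\e\!\left(\tfrac{a\ell}{q}\right)
=\sum_{d\mid (a,q)}d\,\mu(q/d)
\end{align*}
then gives $S^{*}(q_j,a_j)=c_{q_j}(a_j)=\mu(q_j)$. In particular $|S^{*}(q_j,a_j)/\varphi(q_j)|\le 1$, so truncating the sum at $\sqrt{X}$ costs only
\begin{align*}
\sum_{j>\sqrt X}\frac{|S^{*}(q_j,a_j)|}{j^{2}\varphi(q_j)}\ll\sum_{j>\sqrt X}\frac{1}{j^{2}}\ll X^{-1/2},
\end{align*}
which is absorbed in the claimed error.

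Next I would evaluate the completed sum $T:=\sum_{j=1}^{\infty}\mu(q_j)/(j^{2}\varphi(q_j))$. Splitting $j$ by $d=(j,q)$, with $j=dm$ and $(m,q/d)=1$, yields
\begin{align*}
T=\sum_{d\mid q}\frac{\mu(q/d)}{d^{2}\varphi(q/d)}\sum_{\substack{m\ge 1\\(m,q/d)=1}}\frac{1}{m^{2}}
=\zeta(2)\sum_{d\mid q}\frac{\mu(q/d)}{d^{2}\varphi(q/d)}\prod_{p\mid q/d}\!\left(1-\tfrac{1}{p^{2}}\right).
\end{align*}
Re-indexing by $e=q/d$, this becomes
\begin{align*}
T=\frac{\zeta(2)}{q^{2}}\sum_{e\mid q}\frac{\mu(e)\,e^{2}}{\varphi(e)}\prod_{p\mid e}\!\left(1-\tfrac{1}{p^{2}}\right).
\end{align*}

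The last step is to recognise the inner sum as a multiplicative function evaluated at $q$. Only squarefree $e$ contribute; for such $e$ one has $\varphi(e)=\prod_{p\mid e}(p-1)$ and $1-p^{-2}=(p-1)(p+1)/p^{2}$, so
\begin{align*}
\frac{e^{2}}{\varphi(e)}\prod_{p\mid e}\!\left(1-\tfrac{1}{p^{2}}\right)=\prod_{p\mid e}(p+1).
\end{align*}
The arithmetic function $g(e):=\mu(e)\prod_{p\mid e}(p+1)$ is multiplicative with $g(p)=-(p+1)$, so by Euler product expansion
\begin{align*}
\sum_{e\mid q}g(e)=\prod_{p\mid q}\bigl(1-(p+1)\bigr)=\prod_{p\mid q}(-p).
\end{align*}
Combining these steps gives $T=\zeta(2)\prod_{p\mid q}(-p)/q^{2}$, and adding the tail estimate completes the proof. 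The only mildly delicate point is the gcd bookkeeping that forces $(a_j,q_j)=1$; once that is in place the rest is a straightforward multiplicative identity.
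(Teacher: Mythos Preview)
Your proof is correct and follows the same approach the paper sketches: identify the Ramanujan sum $S^{*}(q_j,a_j)$ as $\mu(q_j)$ using $(a_j,q_j)=1$, bound the tail trivially, and evaluate the completed series multiplicatively. You have in fact supplied all the details the paper omits, and your value $S^{*}(q_j,a_j)=\mu(q_j)$ is the correct one---the paper's stated identity $S^{*}(q_j,a_j)=(-1)^j\mu(q_j)$ appears to be a typographical slip, since the Ramanujan sum $c_{q_j}(a_j)$ equals $\mu(q_j)$ whenever $(a_j,q_j)=1$, with no dependence on $j$ beyond that through $q_j$.
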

\begin{proof}
It is well known that $S^{*}(q_j,a_j)= (-1)^j\mu(q_j)$.
The remaining steps are straight forward and we thus omit them.
\end{proof}
Lemma~\ref{lemma5.1} immediately implies 
\begin{corollary}
\label{cor:lemma5.1}
We have for $\alpha\in\mathfrak{M}(1,0)$ with $\mathfrak{M}(a,q)$ as in \eqref{eq:def_M(q,a)} that 
\begin{align*}
\Phi_{\mathbb{P}_2}(\rho \e(\alpha))
=
2\zeta(2)\frac{X\log\log X}{(1-2\pi i \alpha X)\log X}+O\bigg(\frac{X \log \log \log\log X}{\log X}\bigg).
\end{align*}
\end{corollary}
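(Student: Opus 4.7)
The plan is to derive this corollary as an essentially direct specialisation of Lemma \ref{lemma5.1} combined with Lemma \ref{lem:Vaugah_constanot_major}. Since $\mathfrak{M}(1,0)$ corresponds to the choice $a=0$, $q=1$, I would first verify that the hypotheses of Lemma \ref{lemma5.1} are satisfied on this arc: $(0,1)=1$ holds trivially, $q=1\leq (\log X)^A$ is clear, and the Diophantine condition $|\alpha - a/q|=|\alpha|\leq \delta_1 = X^{-1}(\log X)^A$ is exactly the definition of $\mathfrak{M}(1,0)$ from \eqref{eq:def_M(q,a)}. Thus Lemma \ref{lemma5.1} applies and gives
\begin{align*}
\Phi_{\mathbb{P}_2}(\rho \e(\alpha))
=
\frac{2X\log\log X}{(1-2\pi i \alpha X)\log X}\sum_{j\leq\sqrt{X}}\frac{S^{*}(q_j,a_j)}{j^2\varphi(q_j)}+O\bigg(\frac{X \log \log \log\log X}{\log X}\bigg).
\end{align*}

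Next, I would invoke Lemma \ref{lem:Vaugah_constanot_major} with $q=1$. Since the product $\prod_{p\mid 1}(-p)$ is empty and hence equals $1$, and $q^2=1$, the lemma yields
\begin{align*}
\sum_{j\leq\sqrt{X}}\frac{S^{*}(q_j,a_j)}{j^2\varphi(q_j)}
=
\zeta(2)+O(X^{-1/2}).
\end{align*}

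It remains to substitute this expression into the formula above and confirm that the multiplicative error contribution is absorbed into the stated error term. The prefactor satisfies
\begin{align*}
\left|\frac{2X\log\log X}{(1-2\pi i \alpha X)\log X}\right|\leq \frac{2X\log\log X}{\log X},
\end{align*}
because $|1-2\pi i \alpha X|\geq 1$ (the real part is $1$). Consequently the $O(X^{-1/2})$ term from Lemma \ref{lem:Vaugah_constanot_major} produces an error of size $O\!\left(\frac{X^{1/2}\log\log X}{\log X}\right)$, which is easily dominated by $O\!\left(\frac{X\log\log\log\log X}{\log X}\right)$. Combining these pieces produces precisely the claimed asymptotic. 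There is no genuine obstacle here since both ingredients have already been established; the only thing to be careful about is the correct interpretation of the empty product in Lemma \ref{lem:Vaugah_constanot_major} at $q=1$ and the trivial bound on the denominator $|1-2\pi i \alpha X|$, both of which are routine.
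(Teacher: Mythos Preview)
Your proof is correct and follows essentially the same route as the paper: specialise Lemma~\ref{lemma5.1} to $q=1$, $a=0$, evaluate the resulting sum over $j$, and absorb the $O(X^{-1/2})$ contribution. The only cosmetic difference is that the paper computes $q_j=1$, $S^{*}(1,0)=1$, $\varphi(1)=1$ directly (so the sum is simply $\sum_{j\le\sqrt{X}} j^{-2}=\zeta(2)+O(X^{-1/2})$) rather than invoking Lemma~\ref{lem:Vaugah_constanot_major} with an empty product, but the substance is identical.
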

\begin{proof}
We have $\alpha\in\mathfrak{M}(1,0)$ is equivalent to $|\alpha|\leq X^{-1}(\log X)^A$.
Thus can use Lemma~\ref{lemma5.1} with $q=1$, $a=0$. 
We have in this case $q_j =1$ for all $j$ and $S^{*}(1,0) =1$. 
Thus we obtain
\begin{align*}
\Phi_{\mathbb{P}_2}(\rho \e(\alpha))
&=
2X\frac{\log\log X}{(1-2\pi i \alpha X)\log X}\sum_{j\leq\sqrt{X}}\frac{1}{j^2}+O\bigg(\frac{X \log \log \log\log X}{\log X}\bigg).	
\end{align*}
Inserting  $\sum_{j\leq\sqrt{X}}\frac{1}{j^2} =\zeta(2) + O(X^{-1/2})$ completes the proof.
\end{proof}	
The previous results from this section now allow us to conclude that the non-principal major arcs will not contribute to the main term and therefore will be absorbed in the error term.
\begin{lemma} \label{culminationnonprincipal}
Let $\mathfrak{M}(q,a)$ be as in \eqref{eq:def_M(q,a)} with $2\leq q\leq (\log X)^A$ and $(a,q)=1$.
We then have for all $\alpha\in\mathfrak{M}(q,a)$ that
\begin{align*}
|\real(\Phi_{\mathbb{P}_2}(\rho \e(\alpha)))| 
\leq  
\frac{3}{4}\Phi_{\mathbb{P}_2}(\rho).
\end{align*}	
\end{lemma}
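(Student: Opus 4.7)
The plan is to combine the sharp asymptotic formula for $\Phi_{\mathbb{P}_2}(\rho\e(\alpha))$ on a general major arc with an elementary arithmetic bound on the Vaughan--Ramanujan factor $\prod_{p\mid q}(-p)/q^2$, and then compare directly with the baseline value $\Phi_{\mathbb{P}_2}(\rho)$ supplied by Corollary~\ref{cor:lemma5.1} (equivalently, Theorem~\ref{maintermstheorem} at $m=0$).

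First, inserting Lemma~\ref{lem:Vaugah_constanot_major} into Lemma~\ref{lemma5.1} gives, for $\alpha\in\mathfrak{M}(q,a)$ with $2\le q\le(\log X)^A$,
\begin{align*}
\Phi_{\mathbb{P}_2}(\rho\e(\alpha))
=
\frac{2\zeta(2)X\log\log X}{(1-2\pi i(\alpha-a/q)X)\log X}\cdot\frac{\prod_{p\mid q}(-p)}{q^2}
+ O\!\left(\frac{X\log\log\log\log X}{\log X}\right).
\end{align*}
Writing $y:=2\pi(\alpha-a/q)X\in\R$ and using the elementary identity $\real\bigl(\tfrac{1}{1-iy}\bigr)=\tfrac{1}{1+y^2}\in(0,1]$, I would take real parts to obtain
\begin{align*}
|\real(\Phi_{\mathbb{P}_2}(\rho\e(\alpha)))|
\le
\frac{2\zeta(2)X\log\log X}{\log X}\cdot\frac{\operatorname{rad}(q)}{q^2}
+ O\!\left(\frac{X\log\log\log\log X}{\log X}\right),
\end{align*}
where $\operatorname{rad}(q)=\prod_{p\mid q}p$.

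The only nontrivial input is the arithmetic inequality $\operatorname{rad}(q)/q^2\le 1/2$ for every integer $q\ge 2$. Indeed, factoring $q=p_1^{k_1}\cdots p_r^{k_r}$ with $r\ge 1$ and $k_i\ge 1$, we get
\begin{align*}
\frac{\operatorname{rad}(q)}{q^2}
=\prod_{i=1}^{r} p_i^{1-2k_i}
\le \prod_{i=1}^{r} p_i^{-1}
\le 2^{-r}
\le \frac{1}{2},
\end{align*}
with equality iff $q=2$. Combining this with Corollary~\ref{cor:lemma5.1} evaluated at $\alpha=0$, which yields $\Phi_{\mathbb{P}_2}(\rho)=\tfrac{2\zeta(2)X\log\log X}{\log X}(1+o(1))$, we deduce
\begin{align*}
\frac{|\real(\Phi_{\mathbb{P}_2}(\rho\e(\alpha)))|}{\Phi_{\mathbb{P}_2}(\rho)}
\le
\frac{1}{2}+o(1),
\end{align*}
where the $o(1)$ absorbs the ratio $\log\log\log\log X/\log\log X$ from the error term in Lemma~\ref{lemma5.1} as well as the $O(1/\log\log X)$ discrepancy between $\log\log X$ and $\log\log X+M$. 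For all sufficiently large $X$ the right-hand side is at most $3/4$, establishing the claim.

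The main obstacle is purely bookkeeping: one must check that the "sufficiently large $X$" threshold can be taken uniformly in the range $2\le q\le(\log X)^A$, but this is automatic because the arithmetic bound $1/2$ is independent of $q$ and leaves a positive gap $1/4$ to soak up the $o(1)$ contributions uniformly. Everything else follows by direct substitution of previously established lemmas.
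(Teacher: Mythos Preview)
Your proof is correct and follows essentially the same route as the paper: combine Lemma~\ref{lemma5.1} with Lemma~\ref{lem:Vaugah_constanot_major}, then compare against the baseline $\Phi_{\mathbb{P}_2}(\rho)$ from Theorem~\ref{maintermstheorem}. The paper's proof is terse (``Lemmas~\ref{lemma5.1} and \ref{lem:Vaugah_constanot_major} immediately imply''), whereas you have spelled out the key arithmetic step $\operatorname{rad}(q)/q^2\le 1/2$ for $q\ge 2$ and the bound $\real\bigl(\tfrac{1}{1-iy}\bigr)\le 1$, which is exactly what the paper is tacitly using.
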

\begin{proof}
We know from Theorem~\ref{maintermstheorem} that 
\begin{align*} 
\Phi_{\mathbb{P}_2}(\rho) 
&= 
2\zeta(2)\frac{ X}{\log X}(M + \log \log X ) \bigg(1+O\bigg(\frac{1}{\log X}\bigg)\bigg) .
\end{align*}
Since $\alpha\in\mathfrak{M}(q,a)$ with $q\geq 2$, Lemmas~\ref{lemma5.1} and \ref{lem:Vaugah_constanot_major} immediately imply Lemma~\ref{culminationnonprincipal}.
\end{proof}
%%%%%%%%%%%%%%%%%%%%%%%%%%%%%%%%%%%%%%%%%%%%%%%%%%%%%%%%%%%%%%%%%%%%%%
\section{Proof of Theorems \ref{maintheorempapernotrepeated}, \ref{theoremO2}, and \ref{maintheorempaper}} \label{sec:mainresult}
%%%%%%%%%%%%%%%%%%%%%%%%%%%%%%%%%%%%%%%%%%%%%%%%%%%%%%%%%%%%%%%%%%%%%%
We are now in a position to conclude the proof of the main theorems of Section \ref{sec:introduction} by showing that the asymptotict formula for $\partition_{\mathbb{P}_2}(n)$ comes exclusively from the principal major arc $\mathfrak{M}(1,0)$.
\begin{theorem} \label{theorem1over5}
Let $\partition_{\lambda,\mu}(n)$,  $\Psi_{\lambda,\mu}(z)$ and $\Phi_{\lambda,\mu}(z)$ be as in \eqref{eq:def_Psi_lambda_mu} and $\rho$ be the solution 
of the equation 
\begin{align*}
n= \rho\Phi'_{\lambda,\mu} (\rho).
\end{align*}
We then have 
\begin{align}
\partition_{\lambda,\mu}(n)
= 
\frac{\rho^{-n} \Psi_{\lambda,\mu}(\rho)}{(2\pi\Phi_{\lambda,\mu,(2)} (\rho))^{\frac{1}{2}}}(1+O(n^{-\frac{1}{6}}))
\end{align}
as $n\to\infty$ with $\Phi_{\lambda,\mu,(2)} (\rho)= (\rho \frac{d}{d\rho})^2\Phi_{\lambda,\mu}(\rho)$.
\end{theorem}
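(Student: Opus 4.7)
The plan is to apply the standard Hardy-Littlewood saddle-point analysis to the integral representation \eqref{integralarcs}, splitting the contour $[-1/2,1/2)$ into the principal major arc $\mathfrak{M}(1,0)$, the remaining major arcs $\mathfrak{M}\setminus\mathfrak{M}(1,0)$, and the minor arcs $\mathfrak{m}$, and showing that only the first piece contributes to the main term.

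First I would dispose of the minor arcs. Lemma~\ref{minorarclemmabound}, together with the trivial bound on $\Phi_{\mathbb{P}^2}$ inherited from \cite{gafniprimepowers}, gives
\begin{align*}
|\Psi_{\lambda,\mu}(\rho\e(\alpha))| \ll \exp(O(X(\log X)^{9/4-A/8}))
\end{align*}
uniformly for $\alpha \in \mathfrak{m}$. Choosing $A$ sufficiently large so that $9/4-A/8<-1$, and comparing against the target main term
\begin{align*}
\rho^{-n}\Psi_{\lambda,\mu}(\rho)/\sqrt{\Phi_{\lambda,\mu,(2)}(\rho)} \asymp \exp(cX\log\log X/\log X)/X^{3/2}
\end{align*}
via Proposition~\ref{propositionmagnitude}, shows the minor-arc integral is superpolynomially small in $n$ relative to the main term. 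For the remaining major arcs $\mathfrak{M}(q,a)$ with $q\ge 2$, Lemma~\ref{culminationnonprincipal} yields $|\Psi_{\lambda,\mu}(\rho\e(\alpha))| \le \Psi_{\lambda,\mu}(\rho)^{3/4+o(1)}$; combining with the crude measure bound $|\mathfrak{M}\setminus\mathfrak{M}(1,0)| \ll Q^2\delta_1 \ll X^{-1}(\log X)^{3A}$ shows the entire non-principal contribution is $o(1)$ times the main term.

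The real work is on the principal arc $\mathfrak{M}(1,0)=(-\delta_1,\delta_1)$ with $\delta_1 = X^{-1}(\log X)^A$. I would Taylor-expand $\Phi_{\lambda,\mu}(\rho\e(\alpha))$ at $\alpha=0$, using $\frac{d}{d\alpha}\Phi_{\lambda,\mu}(\rho\e(\alpha))|_{\alpha=0} = 2\pi i\,\rho\Phi'_{\lambda,\mu}(\rho)$ and $\frac{d^2}{d\alpha^2}\Phi_{\lambda,\mu}(\rho\e(\alpha))|_{\alpha=0} = -4\pi^2\Phi_{\lambda,\mu,(2)}(\rho)$, to obtain
\begin{align*}
\Phi_{\lambda,\mu}(\rho\e(\alpha)) = \Phi_{\lambda,\mu}(\rho) + 2\pi i \alpha\,\rho\Phi'_{\lambda,\mu}(\rho) - 2\pi^2\alpha^2\Phi_{\lambda,\mu,(2)}(\rho) + O(|\alpha|^3\Phi_{\lambda,\mu,(3)}(\rho))
\end{align*}
uniformly on $\mathfrak{M}(1,0)$. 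The saddle-point equation $n = \rho\Phi'_{\lambda,\mu}(\rho)$ then cancels the linear term against $\e(-n\alpha)$, reducing the principal-arc integral to
\begin{align*}
\rho^{-n}\Psi_{\lambda,\mu}(\rho)\int_{-\delta_1}^{\delta_1}\exp(-2\pi^2\alpha^2\Phi_{\lambda,\mu,(2)}(\rho))(1+O(|\alpha|^3\Phi_{\lambda,\mu,(3)}(\rho)))\,d\alpha.
\end{align*}
Extending the Gaussian to all of $\R$ produces the claimed factor $(2\pi\Phi_{\lambda,\mu,(2)}(\rho))^{-1/2}$, with a truncation error that is superpolynomially small because $\delta_1^2\Phi_{\lambda,\mu,(2)}(\rho) \gg (\log X)^{2A-1}\log\log X$. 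The cubic remainder, integrated against the Gaussian, contributes at most a multiplicative factor of $1+O(\Phi_{\lambda,\mu,(3)}(\rho)\Phi_{\lambda,\mu,(2)}(\rho)^{-2})$, which by Proposition~\ref{propositionmagnitude} is $O(X^{-1}\log X/\log\log X)$; since $X\asymp n^{1/2}(\log n/\log\log n)^{1/2}$, this comfortably absorbs into the stated $O(n^{-1/6})$.

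The main obstacle is keeping the Taylor remainder under control uniformly across the full principal arc, since $|\alpha|$ can reach $\delta_1 \gg \Phi_{\lambda,\mu,(2)}(\rho)^{-1/2}$, which is much wider than the Gaussian's natural concentration scale. I would handle this by splitting $\mathfrak{M}(1,0)$ into an inner region $|\alpha| \le \Phi_{\lambda,\mu,(2)}(\rho)^{-1/2+\varepsilon}$, where the cubic Taylor estimate is valid and produces the main asymptotic via completion of the Gaussian, and an outer annulus where $\exp(-2\pi^2\alpha^2\Phi_{\lambda,\mu,(2)}(\rho))$ already decays superpolynomially and the full integrand can be bounded crudely by monotonicity of $\real\Phi_{\lambda,\mu}$ in $|\alpha|$ together with estimates of the type underpinning Lemma~\ref{culminationnonprincipal}. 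Combining all four contributions then yields the stated asymptotic with error $O(n^{-1/6})$.
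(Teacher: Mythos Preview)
Your proposal follows essentially the same route as the paper: dispose of $\mathfrak{m}$ via Lemma~\ref{minorarclemmabound}, dispose of $\mathfrak{M}\setminus\mathfrak{M}(1,0)$ via Lemma~\ref{culminationnonprincipal}, and do a saddle-point analysis on $\mathfrak{M}(1,0)$ with a Taylor expansion whose linear term is cancelled by the choice of $\rho$.

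The one place where your write-up is thinner than the paper is the outer annulus of the principal arc. You invoke ``monotonicity of $\real\Phi_{\lambda,\mu}$ in $|\alpha|$'', but no such monotonicity is established anywhere, and it is not obvious for a double prime sum. The paper instead makes a three-way split $I_1\cup I_2\cup I_3$ of $\mathfrak{M}(1,0)$ with cut points $\eta=X^{-17/12}$ and $\beta=(9\pi X)^{-1}$: on the innermost $I_1$ the cubic Taylor remainder is $O(n^{-1/6})$ and the Gaussian completion works exactly as you describe; on the middle $I_2$ the quadratic Taylor term already forces $\real\Phi\le\Phi(\rho)-cX^{1/6}\log\log X/\log X$; and on the outermost $I_3$, where $|\alpha|$ is of order $X^{-1}$ or larger and the Taylor expansion is no longer useful, the paper applies Corollary~\ref{cor:lemma5.1} (the $q=1$ case of Lemma~\ref{lemma5.1}) to get $\real\Phi_{\mathbb{P}_2}(\rho\e(\alpha))\le\tfrac{80}{81}\Phi_{\mathbb{P}_2}(\rho)$ directly. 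Your phrase ``estimates of the type underpinning Lemma~\ref{culminationnonprincipal}'' is pointing at exactly this corollary, so the idea is right; just be aware that it is this explicit asymptotic, not any monotonicity, that does the work on the outer part of the principal arc.
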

\begin{proof}
Recall that $\partition_{\lambda,\mu}(n)$ is given by the integral
\begin{align*}
\partition_{\lambda,\mu}(n)
=
\rho^{-n}\int_{-1/2}^{1/2}\Psi_{\lambda,\mu}(\rho \e(\alpha))\e(-n\alpha) d\alpha 
= 
\rho^{-n}\int_{-1/2}^{1/2}\exp(\Phi_{\lambda,\mu}(\rho \e(\alpha))\e(-n\alpha)) d\alpha.    
\end{align*}
We have shown in \eqref{maintermtheorem2} that for $m\in\N_0$ that
\begin{align} 
\left(\rho\frac{\partial}{\partial \rho}\right)^m\Phi_{\mathbb{P}_2}(\rho) 
= 
2\frac{\zeta(2) \Gamma(m+1) X^{m+1}}{\log X}(M + \log \log X ) \bigg(1+O\bigg(\frac{1}{\log X}\bigg)\bigg).
\label{eq:recall_main_alpha=0}
\end{align}
Furthermore, it was shown in \cite[Lemma~3.1]{gafniprimepowers} for $m\in\N_0$ that
\begin{align}
\left(\rho\frac{\partial}{\partial \rho}\right)^m \Phi_{\mathbb{P}^2}(\rho)
=
\frac{\Gamma(m+1/2)\zeta(3/2)X^{m+1/2}}{\log X} \left(1+ O\left(\frac{1}{\log X}\right)\right).
\label{eq:saddle_gafni}
\end{align}
Thus, for any $\alpha\notin\mathfrak{M}(1,0)$ and $n$ sufficiently large,  Lemma \ref{minorarclemmabound} and Lemma \ref{culminationnonprincipal} imply
\begin{align}
\real\big(\Phi_{\lambda,\mu}(\rho \e(\alpha))\big)
&=  
\lambda 	\real\big(\Phi_{\mathbb{P}_2}(\rho \e(\alpha))\big) + 	\mu \real\big(\Phi_{\mathbb{P}^2}(\rho \e(\alpha))\big)
\leq  
\frac{3}{4}\lambda\Phi_{\mathbb{P}_2}(\rho) + |\mu|  \Phi_{\mathbb{P}^2}(\rho)\\
&\leq  
\frac{5}{6}\lambda\Phi_{\mathbb{P}_2}(\rho) + \mu \frac{5}{6} \Phi_{\mathbb{P}^2}(\rho)
=
\frac{5}{6} \Phi_{\lambda,\mu}(\rho).
\label{eq:bound_minor_arc_Phi_lambda_mu}
\end{align}
Further, \eqref{magnitudes3} implies with $x=n$ and $m=0$ that  
$\Phi_{\lambda,\mu}(\rho)\sim  2(\frac{n}{\lambda}\zeta(2)\frac{\log\log n}{\log n})^{1/2}$. 
Thus we get
\begin{align*}
\left|\int_{[-\frac{1}{2}, \frac{1}{2}]\setminus \mathfrak{M}(1,0)}
\exp(\Phi_{\lambda,\mu}(\rho \e(\alpha))\e(-n\alpha))d\alpha \right|
&\leq 
\int_{[-\frac{1}{2}, \frac{1}{2}]\setminus \mathfrak{M}(1,0)}
\exp\big(\real(\Phi_{\lambda,\mu}(\rho \e(\alpha)))\big) d\alpha \\
&\leq 
\exp\left(\frac{5}{6}\Phi_{\lambda,\mu}(\rho)\right)
=
\exp\left(-\frac{1}{6}\Phi_{\lambda,\mu}(\rho)\right)
\exp\left(\Phi_{\lambda,\mu}(\rho)\right)\\
&\ll \left(\Phi_{\lambda,\mu}(\rho)\right)^{-C}\exp\left(\Phi_{\lambda,\mu}(\rho)\right) 
\ll
n^{-B}\exp\left(\Phi_{\lambda,\mu}(\rho)\right),
\end{align*}
where $B\geq 1$ can be chosen arbitrarily large.
Therefore
\begin{align}\label{principalarc}
\partition_{\lambda,\mu}(n)
= 
\rho^{-n}\int_{\mathfrak{M}(1,0)}\exp(\Phi_{\lambda,\mu}(\rho \e(\alpha))\e(-n\alpha)) d\alpha +O(\rho^{-n}\Psi_{\lambda,\mu}(\rho)n^{-B})  
\end{align}
for any constant $B\geq 1$. 
Recall, $\mathfrak{M}(1,0) = \{\alpha\in[-1/2,1/2]; |\alpha|\leq X^{-1}(\log X)^{A}\}$ with $A>18$.
We now spilt $\mathfrak{M}(1,0)$ into the three regions
\begin{align*}
I_1 =\{\alpha\in\mathfrak{M}(1,0);\,|\alpha|\leq \eta \},\ 
I_2 =\{\alpha\in\mathfrak{M}(1,0);\,\eta <|\alpha|\leq \beta\} \ \text{ and } \
I_3 =\{\alpha\in\mathfrak{M}(1,0);\,\beta<|\alpha| \}
\end{align*}
with $\eta = X^{-17/12}$ and $\beta = (9\pi X)^{-1}$.
Next we show that the integrals over $I_2$ and $I_3$ are of lower order.
For $I_3$ we use Corollary~\ref{cor:lemma5.1} and for $I_2$ the Taylor approximation of $\Phi_{\mathbb{P}_2}(\rho \e(\alpha))$ at $\alpha=0$.
We begin with $I_3$. Using that $(1-r)^{-1}\leq 1- 2r$ for $0\leq r\leq 1/2$, we obtain for $\alpha\in I_3$ that
\begin{align*}
\real\left(\frac{1}{1+ 2\pi i \alpha X}\right)
=
\frac{1}{1+ 4\pi^2 \alpha^2 X^2}
\leq 
\frac{1}{1+ 4\pi^2 \beta^2 X^2}
\leq 
1- 2\pi^2 \beta^2 X^2
=
\frac{79}{81}.
\end{align*}
Combining this computation with Corollary~\ref{cor:lemma5.1} and Theorem~\ref{maintermstheorem}, we obtain
\begin{align*}
\real\big(\Phi_{\mathbb{P}_2}(\rho \e(\alpha))\big) 
&\leq 
\frac{79}{81}\left(2X\zeta(2)\frac{\log\log X}{\log X}\right)+O\bigg(\frac{X \log \log \log\log X}{\log X}\bigg)
\leq 
\frac{80}{81} \Phi_{\mathbb{P}_2}(\rho)
\end{align*}
for $X$ large enough. 
To show this, we used $\log \log \log\log X = o(\log\log X)$ and thus for $X$ large
$$\frac{X \log \log \log\log X}{\log X}\leq \frac{1}{81}\frac{\log\log X}{\log X}.$$
Combining this with the same argument as in \eqref{eq:bound_minor_arc_Phi_lambda_mu}, we deduce immediately that for $\alpha\in I_3$ 
\begin{align*}
\real\big(\Phi_{\lambda,\mu}(\rho \e(\alpha))\big)
&\leq  
\frac{161}{162} \Phi_{\lambda,\mu}(\rho).
\end{align*}
We can now use exactly same argument as above to show that the integral over $I_3$ is $\ll \rho^{-n}\Psi_{\mathbb{P}_2}(\rho)n^{-B}$ with $B\geq 1$ arbitrary and thus of lower order.
As next we look at the integral over $I_2$. 
We use here the Taylor approximation of $\Phi_{\lambda,\mu}(\rho e(\alpha))$ since it is more precise in the central region than Corollary~\ref{cor:lemma5.1}. 
We have
\begin{align}
\Phi_{\lambda,\mu}(\rho \e(\alpha)) 
&= 
\Phi_{\lambda,\mu}(\rho)
+2\pi i\alpha a_n
-2\pi^2\alpha^2 b_n +4\pi^3 R(\rho,\alpha)
\label{eq:taylortheorem2}
\end{align} 
with 
$a_n:= \rho\Phi_{\lambda,\mu}'(\rho)$,   $b_n:= \rho\Phi_{\lambda,\mu}'(\rho)+\rho^2\Phi_{\lambda,\mu}''(\rho)$ and
\begin{align*}
\max\{|\real R(\rho,\alpha)|, |\imag R(\rho,\alpha)|\}
\leq 
|\alpha|^3 c_n,
%	=
%	O\left(\alpha^3 n^2 \right).
\end{align*} 
where $c_n:=\rho \Phi_{\lambda,\mu}'(\rho)+ \rho^2 \Phi_{\lambda,\mu}''(\rho)+\rho^3 \Phi'''_{\lambda,\mu}(\rho)$. 
Equations \eqref{eq:recall_main_alpha=0} and \eqref{eq:saddle_gafni} imply that 
\begin{align*}
b_n\sim 4\frac{\zeta(2) X^{3}\log \log X }{\log X}
\quad \text{and} \quad
c_n\sim 12\frac{\zeta(2) X^{4}\log \log X }{\log X}.
\end{align*}
Since $\alpha \in I_2$ (and  thus $ X^{-17/12}\leq |\alpha|\leq (9\pi X)^{-1}$), we get for $X$ large enough that
\begin{align*}
\real (\Phi_{\lambda,\mu}(\rho \e(\alpha))) 
&\leq
\Phi_{\lambda,\mu}(\rho) 
-2\pi^2\alpha^2 b_n +4\pi^3 |\alpha|^3 c_n\\
&\leq 
\Phi_{\lambda,\mu}(\rho) 
-
2\pi^2\alpha^2\frac{\zeta(2) X^{3}\log \log X }{\log X}\left( 3- 26\pi |\alpha|X   \right)\\
&\leq 
\Phi_{\lambda,\mu}(\rho) 
- \frac{2\pi^2\alpha^2	}{27}\frac{\zeta(2) X^{3}\log \log X }{\log X}
\leq 
\Phi_{\lambda,\mu}(\rho) 
- \frac{2\pi^2 X^{1/6}}{27}\frac{\zeta(2) \log \log X }{\log X}.
\end{align*}
Equation \eqref{eq:magnitudes0} implies that $X^{1/6}  \asymp (\frac{n \log n}{\log\log n})^{1/12}$.
This implies that for $n$ large
\begin{align*}
\left|\int_{I_2}
\exp(\Phi_{\lambda,\mu}(\rho \e(\alpha))\e(-n\alpha))d\alpha \right|
\leq 
\exp (\Phi_{\lambda,\mu}(\rho)  - n^{-1/13})
\ll
n^{-B}\exp (\Phi_{\lambda,\mu}(\rho))
\end{align*}
for any constant $B\geq 1$. 

Thus it remains to compute the integral over $I_1=[-\eta,\eta]$.
This computation follows the standard saddle point method, see for instance \cite[Chapter VIII.3]{FlSe09}.
We thus give only the most relevant details.
Proposition~\ref{propositionmagnitude} implies 
\begin{align*}
\eta=X^{-\frac{17}{12}} \asymp \left(\frac{n \log n}{\log\log n}\right)^{-\frac{17}{12}}, \quad
b_n\sim 2\left(\frac{n}{\lambda}\right)^{\frac{3}{2}} \bigg(\frac{\log n}{4\zeta(2)(\log \log n  )}\bigg)^{\frac{1}{2}}
\ \text{ and } \
c_n \ll n^2\frac{\log n}{\log\log n}.
\end{align*}
Inserting this and \eqref{eq:taylortheorem2} in to the integral over $I_2$ and using that $\rho\Phi_{\lambda,\mu}'(\rho) =n$, we deduce
\begin{align*}
\int_{-\eta}^{\eta}
\exp(\Phi_{\mathbb{P}_2}(\rho \e(\alpha))\e(-n\alpha))d\alpha 
&=
\exp\left(\Phi_{\mathbb{P}_2}(\rho)\right)
\int_{-\eta}^{\eta} \exp\left(-2\pi^2\alpha^2 b_n +O(\alpha^3 c_n )\right)   d\alpha
\\
&=
\frac{	\exp\left(\Phi_{\mathbb{P}_2}(\rho)\right)}{\sqrt{b_n}}
\int_{-\eta\sqrt{b_n}}^{\eta\sqrt{b_n}} \exp(-2\pi^2y^2 +O(n^{-1/6} )) dy.
\end{align*}
Note that $\eta\sqrt{b_n} \geq n^{\frac{1}{25}}$. 
Thus the remaining computational steps are straight forward and we omit them.
\end{proof}

For completeness, it is worth to highlight that one could extend Theorem~\ref{theorem1over5} a little bit and replace the error term by a complete asymptotic expansion. 
However, there is limited added value at this point as the resulting expressions are quite involved. 
We thus do not determine it here.

We can now prove
\begin{theorem} \label{thm:section7.2}
Let $\lambda>0$.
We then have as $n\to\infty$
\begin{align}
\partition_{\lambda,\mu}(n) 
%	&= 
%	\frac{\rho^{-n} \Psi_{\mathbb{P}_2}(\rho)}{(2 \pi \Phi_{\lambda,\mu,(2)}(\rho))^{\frac{1}{2}}}(1+O(n^{-\frac{1}{6}})) \nonumber \\
&\sim \mathfrak{c}_1 n^{-\frac{3}{4}} (\log(n/\lambda))^{-\frac{1}{4}} (\log \log n + \mathfrak{c}_2)^{\frac{1}{4}} \nonumber \\
& \quad \times \exp \bigg\{\mathfrak{c}_3 \bigg( \frac{n}{\log(n/\lambda)}\bigg)^{\frac{1}{2}} (\log \log n + \mathfrak{c}_2)^{\frac{1}{2}} \bigg(1+ O\bigg(\frac{\log \log n}{\log n}\bigg)\bigg)\bigg\}, \nonumber
\end{align}
where the constants are given by
\begin{align}
\mathfrak{c}_1 = \frac{(4 \zeta(2))^{\frac{1}{4}}}{2 \sqrt{\pi}\lambda^{\frac{3}{4}}}, \quad \mathfrak{c}_2 = M - \log 2, 
\quad \textnormal{and} \quad \mathfrak{c}_3 = \frac{\lambda+\lambda^{-1}}{\lambda^{\frac{1}{2}}}(4 \zeta(2))^{\frac{1}{2}}.
\end{align}
\end{theorem}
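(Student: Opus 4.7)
The strategy is a direct substitution. Theorem~\ref{theorem1over5} already supplies
\[
\partition_{\lambda,\mu}(n)=\frac{\rho^{-n}\Psi_{\lambda,\mu}(\rho)}{\sqrt{2\pi\Phi_{\lambda,\mu,(2)}(\rho)}}\bigl(1+O(n^{-1/6})\bigr),
\]
with $\rho=\rho_{\lambda,\mu}(n)$ the saddle point solving $n=\rho\Phi'_{\lambda,\mu}(\rho)$, so all that remains is to evaluate the three factors $\rho^{-n}$, $\Psi_{\lambda,\mu}(\rho)=\exp(\Phi_{\lambda,\mu}(\rho))$ and $\Phi_{\lambda,\mu,(2)}(\rho)$ by means of Proposition~\ref{propositionmagnitude} together with the Gafni-type expansion \eqref{eq:saddle_gafni} governing $\Phi_{\mathbb{P}^2}$ and its derivatives.

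For the exponential I would split
\[
\log\bigl(\rho^{-n}\Psi_{\lambda,\mu}(\rho)\bigr)=\frac{n}{X}+\lambda\Phi_{\mathbb{P}_2}(\rho)+\mu\Phi_{\mathbb{P}^2}(\rho),
\]
then plug \eqref{magnitudes1} into the first summand and \eqref{magnitudes3} at $m=0$ into the second. The third summand, by \eqref{eq:saddle_gafni} at $m=0$, has size $O(X^{1/2}/\log X)$; once $X$ is converted to $n$-variables via \eqref{eq:magnitudes0}, this becomes strictly smaller than the other two and is absorbed into the relative correction $1+O(\log\log n/\log n)$ inside the exponential. Collecting the surviving contributions yields the advertised $\mathfrak{c}_3(n/\log(n/\lambda))^{1/2}(\log\log n+\mathfrak{c}_2)^{1/2}$ in the exponent. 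For the denominator one has $\Phi_{\lambda,\mu,(2)}(\rho)\sim\lambda\Phi_{\mathbb{P}_2,(2)}(\rho)$ (the $\mu$-contribution being again subdominant), so \eqref{magnitudes3} at $m=2$ combined with \eqref{eq:magnitudes0} reads off the polynomial prefactor $\mathfrak{c}_1\,n^{-3/4}(\log(n/\lambda))^{-1/4}(\log\log n+\mathfrak{c}_2)^{1/4}$.

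The principal obstacle is bookkeeping of the subleading corrections. Every application of Proposition~\ref{propositionmagnitude} carries its own $1+O(\log\log n/\log n)$ factor, and these must be tracked through the three separate summands in the exponent and through the prefactor, verifying that, after exponentiation and combination with the $O(n^{-1/6})$ error coming from Theorem~\ref{theorem1over5}, they all collapse into the single factor $1+O(\log\log n/\log n)$ sitting inside the $\exp\{\cdots\}$ of the conclusion. Since the exponent has magnitude $\asymp\sqrt{n\log\log n/\log n}$, a relative error of order $\log\log n/\log n$ in the exponent produces an absolute error of size $O(\sqrt{n}(\log\log n)^{3/2}/(\log n)^{3/2})$, which dominates the $O(n^{-1/6})$ prefactor error and exactly matches the statement. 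A secondary, easier point is to check that the $\mu\Phi_{\mathbb{P}^2}$ contributions remain in the error uniformly for every admissible $\mu\in\R$ (in particular negative $\mu$, as required for $\mathbb{P}_2^{\neq}$), which is immediate since only $|\mu|$ enters any majorisation.
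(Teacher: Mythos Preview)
Your proposal is correct and follows essentially the same approach as the paper: invoke Theorem~\ref{theorem1over5}, then evaluate $\rho^{-n}\Psi_{\lambda,\mu}(\rho)=\exp(n\log\tfrac{1}{\rho}+\Phi_{\lambda,\mu}(\rho))$ and $\Phi_{\lambda,\mu,(2)}(\rho)$ via Proposition~\ref{propositionmagnitude}, discarding the $\mu\Phi_{\mathbb{P}^2}$ contributions as lower order. The paper's proof is terser (it writes down the evaluated numerator and denominator in one line each), but your more explicit bookkeeping of the $\mu$-terms through \eqref{eq:saddle_gafni} and of the error propagation is exactly what underlies that computation.
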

\begin{proof}
The numerator and denominator can be computed using Proposition \ref{propositionmagnitude} and Theorem \ref{theorem1over5}% as
\begin{align*}
\rho^{-n} \Psi_{\mathbb{P}_{\lambda,\mu}}(\rho) &= \exp\bigg(n \log \frac{1}{\rho(n)} + \Phi_{\lambda,\mu}(\rho(n)) \bigg) \nonumber \\
&= 
\exp\bigg\{ (\lambda+\lambda^{-1}) \bigg( \frac{4 \zeta(2) \frac{n}{\lambda} (\log \log(n/\lambda) + M - \log 2   )}{\log (n/\lambda)} \bigg)^{\frac{1}{2}} \bigg(1+ O\bigg(\frac{\log \log n}{\log n}\bigg)\bigg) \bigg\},
\end{align*}
as well as
\begin{align*}
(\Phi_{{\lambda,\mu},(2)}(\rho(n)))^{\frac{1}{2}} 
= 
\sqrt{2} \left(\frac{n}{\lambda}\right)^{\frac{3}{4}} \bigg(\frac{\log(n/\lambda)}{4 \zeta(2) (M - \log 2 + \log \log(n/\lambda) )}\bigg)^{\frac{1}{4}}\bigg(1+ O\bigg(\frac{\log \log n}{\log n}\bigg)\bigg).
\end{align*}
Therefore, we now arrive at the main asymptotic
\begin{align*}
\partition_{\lambda,\mu}(n) 
&= 
\frac{\rho^{-n} \Psi_{\mathbb{P}_2}(\rho)}{(2 \pi \Phi_{\lambda,\mu,(2)}(\rho))^{\frac{1}{2}}}(1+O(n^{-\frac{1}{6}})) \nonumber \\
&\sim \mathfrak{c}_1 n^{-\frac{3}{4}} (\log(n/\lambda))^{-\frac{1}{4}} (\log \log n + \mathfrak{c}_2)^{\frac{1}{4}} \nonumber \\
&\quad \times \exp \bigg\{\mathfrak{c}_3 \bigg( \frac{n}{\log(n/\lambda)}\bigg)^{\frac{1}{2}} (\log \log n + \mathfrak{c}_2)^{\frac{1}{2}} \bigg(1+ O\bigg(\frac{\log \log n}{\log n}\bigg)\bigg)\bigg\}, \nonumber
\end{align*}
where the constants are now identified to be
\begin{align*}
\mathfrak{c}_1 = \frac{(4 \zeta(2))^{\frac{1}{4}}}{2 \sqrt{\pi}\lambda^{\frac{3}{4}}}, \quad \mathfrak{c}_2 = M - \log 2, 
\quad \textnormal{and} \quad \mathfrak{c}_3 = \frac{\lambda+\lambda^{-1}}{\lambda^{\frac{1}{2}}}(4 \zeta(2))^{\frac{1}{2}},
\end{align*}
and this ends the proof of the main asymptotic.
\end{proof}
Employing Theorems \ref{theorem1over5} and \ref{thm:section7.2} the proofs of Theorems \ref{maintheorempapernotrepeated}, \ref{theoremO2}, and \ref{maintheorempaper} now follow by a direct calculation.
%%%%%%%%%%%%%%%%%%%%%%%%%%%%%%%%%%%%%%%%%%%%%%%%%%%%%%%%%%%%%%%%%%%%%%
\section{Proof of Theorem \ref{differencetheorempaper}} \label{sec:mainresult2}
%%%%%%%%%%%%%%%%%%%%%%%%%%%%%%%%%%%%%%%%%%%%%%%%%%%%%%%%%%%%%%%%%%%%%%
The proof will be a consequence of the following result.
\begin{theorem} \label{difference1over5}
Using the notation defined above with $\rho = \rho(n)$, one has
\begin{align*}
\partition_{\lambda,\mu}(n+1)-\partition_{\lambda,\mu}(n) 
=
\frac{\rho^{-n} \log(\frac{1}{\rho}) \Psi_{\lambda,\mu}(\rho)}{(2\pi\Phi_{\lambda,\mu,(2)}(\rho))^{\frac{1}{2}}}(1+O(n^{-1/6})).   
\end{align*}
\end{theorem}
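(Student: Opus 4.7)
The plan is to imitate the arc decomposition used in the proof of Theorem \ref{theorem1over5}, now applied to the difference of partition counts. Applying \eqref{integralarcs} twice with the same saddle point $\rho = \rho(n)$ and combining yields
\begin{align*}
\partition_{\lambda,\mu}(n+1) - \partition_{\lambda,\mu}(n)
=
\rho^{-n} \int_{-1/2}^{1/2} g(\alpha)\, \Psi_{\lambda,\mu}(\rho \e(\alpha)) \e(-n\alpha)\, d\alpha,
\end{align*}
where $g(\alpha) := \rho^{-1}\e(-\alpha) - 1 = e^{1/X - 2\pi i \alpha} - 1$. Since $|g(\alpha)| \leq e^{1/X} + 1 = O(1)$ uniformly in $\alpha$, inserting this bounded factor into the bounds of Lemma \ref{minorarclemmabound}, Lemma \ref{culminationnonprincipal}, and the $I_2$, $I_3$ arguments in the proof of Theorem \ref{theorem1over5} shows that all contributions outside $I_1 = [-\eta, \eta]$ with $\eta = X^{-17/12}$ remain of size $O(\rho^{-n}\Psi_{\lambda,\mu}(\rho)\, n^{-B})$ for any $B \geq 1$, hence negligible compared to the target main term.

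On $I_1$ I will expand $g$ around $\alpha = 0$. Since $|1/X - 2\pi i \alpha| \ll 1/X$ there, one has
\begin{align*}
g(\alpha)
=
(e^{1/X} - 1) - 2\pi i \alpha\, e^{1/X} + O(\alpha^2)
=
\log(1/\rho)\bigl(1 + O(1/X)\bigr) - 2\pi i \alpha\bigl(1 + O(1/X)\bigr) + O(\alpha^2),
\end{align*}
using $\log(1/\rho) = 1/X$. Substituting into the integral over $I_1$, the constant term $\log(1/\rho)(1+O(1/X))$ pulls out and multiplies exactly the saddle-point integral evaluated in the proof of Theorem \ref{theorem1over5}, producing
\begin{align*}
\log(1/\rho) \cdot \frac{\rho^{-n} \Psi_{\lambda,\mu}(\rho)}{(2\pi \Phi_{\lambda,\mu,(2)}(\rho))^{1/2}}\bigl(1 + O(n^{-1/6})\bigr),
\end{align*}
which is the claimed leading term, as the $O(1/X)$ relative error is absorbed by $O(n^{-1/6})$.

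The only genuinely new point, and the main obstacle, is showing that the linear-in-$\alpha$ term of $g(\alpha)$ contributes strictly below the main order. Combining \eqref{eq:taylortheorem2} with the saddle-point defining identity $n = \rho \Phi'_{\lambda,\mu}(\rho)$ simplifies $\Psi_{\lambda,\mu}(\rho\e(\alpha))\e(-n\alpha) = \Psi_{\lambda,\mu}(\rho)\exp(-2\pi^2 \alpha^2 b_n + 4\pi^3 R(\rho,\alpha))$, whose leading quadratic exponent is an even function of $\alpha$. Expanding $\exp(4\pi^3 R(\rho,\alpha)) = 1 + O(|\alpha|^3 c_n)$ and pairing with the odd factor $\alpha$, the Gaussian integral vanishes by symmetry and one is left with an error of size
\begin{align*}
\Psi_{\lambda,\mu}(\rho) \cdot c_n \int_{-\eta}^{\eta} |\alpha|^4\, e^{-2\pi^2 \alpha^2 b_n}\, d\alpha
\ll
\Psi_{\lambda,\mu}(\rho)\, c_n\, b_n^{-5/2}.
\end{align*}
Comparing with the main term, whose size is $\log(1/\rho) \cdot b_n^{-1/2}\Psi_{\lambda,\mu}(\rho) \asymp X^{-1} b_n^{-1/2}\Psi_{\lambda,\mu}(\rho)$, the relative error is $c_n X / b_n^2 \ll X^{-1} \log X / \log\log X$, which is $o(n^{-1/6})$ by Proposition \ref{propositionmagnitude}. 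The $O(\alpha^2)$ remainder of $g$ is handled analogously and contributes at relative order $X/b_n$, which is even smaller. Assembling these estimates completes the proof of Theorem \ref{difference1over5}, after which Theorem \ref{differencetheorempaper} follows by dividing by the formula from Theorem \ref{theorem1over5} and invoking \eqref{magnitudes1}.
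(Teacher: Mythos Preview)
Your proof is correct and follows the same overall scheme as the paper: write the difference as $\rho^{-n}\int g(\alpha)\Psi_{\lambda,\mu}(\rho\e(\alpha))\e(-n\alpha)\,d\alpha$, bound $g$ trivially on the arcs away from the origin so that the estimates of Theorem~\ref{theorem1over5} carry over unchanged, and then expand $g$ on the central interval.

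The one point where you diverge from the paper is the treatment of the linear part $-2\pi i\alpha$ of $g$ on $I_1$. The paper shrinks the central interval to $|\alpha|\le X^{-2}(\log X)^2$ so that this linear term can be absorbed directly into an $O(\eta)$ error that is small relative to $1/X$; you instead keep $\eta=X^{-17/12}$ from Theorem~\ref{theorem1over5} and kill the linear term by parity, noting that $\alpha\,e^{-2\pi^2\alpha^2 b_n}$ integrates to zero and bounding the residual $\int|\alpha|^4 e^{-2\pi^2\alpha^2 b_n}\,d\alpha\ll b_n^{-5/2}$. Your route is a bit more work but is self-contained: it recycles the $I_1,I_2,I_3$ splitting of Theorem~\ref{theorem1over5} verbatim without having to reverify the outer-arc bounds for a new, smaller $\eta$. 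The paper's route is shorter on $I_1$ but implicitly requires checking that the Gaussian tail beyond the much smaller $\eta=X^{-2}(\log X)^2$ is still negligible against a main term that is now a factor $X^{-1}$ smaller than in Theorem~\ref{theorem1over5}; your argument sidesteps that issue entirely.
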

\begin{proof}
Recall that $\rho=\rho(n)$ and let $X$ satisfy $\rho=e^{-1/X}$. We can use \eqref{integralarcs} to write
\begin{align*}
\partition_{\lambda,\mu}(n+1)-\partition_{\lambda,\mu}(n)
=
\int_{-1/2}^{1/2} \rho^{-n}\exp\big(\Phi_{\lambda,\mu}(\rho \e(\alpha)\big)\e(-n\alpha)(\rho^{-1} \e(-\alpha)-1) d\alpha.    
\end{align*}
We remark the following bound $|\rho^{-1}e^{-2\pi i\alpha}-1|\leq e^{1/X}+1\leq 4$. 
The contribution from $|\alpha|>\eta=X^{-2}(\log X)^2$ is
\begin{align*}
\ll \frac{\rho^{-n}\Psi_{\lambda,\mu}(\rho)}{(2\pi\Phi_{\lambda,\mu ,(2)}(\rho))^{\frac{1}{2}}}n^{-B} 
\end{align*}
for any positive constant $B$ by retracing our steps in the proof of Theorem~\ref{theorem1over5}. 
However, on the complementary interval when $|\alpha|\leq \eta$, we obtain
\begin{align*}
\rho^{-1} \e(-\alpha)-1=\exp\left(\frac{1}{X}-2\pi i \alpha\right)-1=\frac{1}{X}+O(\eta)=\frac{1}{X}+O(X^{-2}(\log X)^2).    
\end{align*}
Once again, retracing to the proof of Theorem \ref{theorem1over5}, the segment $[-\eta, \eta]$ yields
\begin{align*}
\int_{-\eta}^{\eta} \rho^{-n}\exp(\Phi_{\lambda,\mu}(\rho \e(\alpha)-2\pi in\alpha))d\alpha 
= 
\frac{\rho^{-n}\Psi_{\lambda,\mu}(\rho)}{(2\pi\Phi_{\lambda,\mu ,(2)}(\rho))^\frac{1}{2}}(1+O(n^{-1/6})).
\end{align*}
Finally we employ \eqref{asympaux02} so that the difference at $n+1$ and $n$ is
\begin{align*}
\partition_{\lambda,\mu}(n+1)-\partition_{\lambda,\mu}(n) 
&= 
\frac{\rho^{-n}\Psi_{\lambda,\mu}(\rho)}{(2\pi\Phi_{\lambda,\mu ,(2)}(\rho))^{\frac{1}{2}}}(1+O(n^{-1/6}))\left(\frac{1}{X}+O(X^{-2}(\log X)^2)\right) \nonumber \\
&=
\frac{\rho^{-n}\log(\frac{1}{\rho})\Psi_{\lambda,\mu}(\rho)}{(2\pi\Phi_{\lambda,\mu,(2)}(\rho))^{\frac{1}{2}}}(1+O(n^{-1/6})),   
\end{align*}
which is what we aimed to show. 
\end{proof}
From Theorem \ref{difference1over5} and Proposition \ref{propositionmagnitude} we immediately obtain Theorem \ref{differencetheorempaper} by setting the parameters $\mu = \lambda = \frac{1}{2}$.
%%%%%%%%%%%%%%%%%%%%%%%%%%%%%%%%%%%%%%%%%%%%%%%%%%%%%%%
\section{Conclusion and future work} \label{sec:futurework}
%%%%%%%%%%%%%%%%%%%%%%%%%%%%%%%%%%%%%%%%%%%%%%%%%%%%%%%
So far we have considered only two primes in different setups. A natural question to ask is how to go beyond semiprimes into powerful almost-primes or smooth numbers. \\

Let $m$ be a positive integer with canonical decomposition $m = p_1^{m_1}p_2^{m_2} \cdots p_r^{m_r}$ where $p_k \in \mathbb{P}$ for $k=1, \cdots, r$ and where $m_1, m_2, \cdots, m_r$ are positive integers. Now let $\Mset = \{p_1^{m_1}p_2^{m_2} \cdots p_r^{m_r} : p_k \in \Pri \textnormal{ for } k=1,\cdots,r\}$ denote the set of $m$-powerful $r$-almost primes. In this situation, $\partition_\Mset(n)$ denotes the partitions of an integer $n$ in terms of a canonical decomposition of a desired integer $m$. If $m=p^k$ with $p \in \Pri$ and $k \in \N$, then this corresponds to the case studied by Gafni \cite{gafniprimepowers}; and if $k=1$, then this corresponds to the case studied by Vaughan \cite{vaughan}. Moreover, if $r=2$ and $m_1=m_2=1$, then this corresponds to the work presented in this paper. Within this framework we have a substantially richer arithmetical structure and we may study questions such as the partitions into integers that are products of, say, three primes or squares of two primes or the product of a prime and the cube of another prime. \\

Another direction is to manufacture prime zeta functions such as almost-prime zeta functions. For instance, we could define the prime zeta function 
\begin{align*}
\zeta_{\mathbb{P}}(k,s) := \sum_{\substack{n \ge 1 \\ \Omega(n)=k}} \frac{1}{n^s},
\end{align*}
where $\Omega(n)$ is the total number of prime factors of $n$. Clearly when $k=1$, this reduces to the prime zeta function, i.e. $\zeta_{\mathbb{P}}(1,s) =\zeta_{\mathcal{P}}(s)$. However, using Newton's identities one observes that
\begin{align*}
\zeta_{\mathbb{P}}(2,s) &= \frac{1}{2}(\zeta_{\mathcal{P}}(s)^2+\zeta_{\mathcal{P}}(2s)) \nonumber \\
\zeta_{\mathbb{P}}(3,s) &= \frac{1}{6}(\zeta_{\mathcal{P}}(s)^3+\zeta_{\mathcal{P}}(s)\zeta_{\mathcal{P}}(2s)+2\zeta_{\mathcal{P}}(3s)) \nonumber \\
\zeta_{\mathbb{P}}(4,s) &= \frac{1}{24}(\zeta_{\mathcal{P}}(s)^4+6\zeta_{\mathcal{P}}(s)^2\zeta_{\mathcal{P}}(2s)+3\zeta_{\mathcal{P}}(2s)^2+8\zeta_{\mathcal{P}}(s)\zeta_{\mathcal{P}}(3s)+6\zeta_{\mathcal{P}}(4s)),
\end{align*}
and in general for $n \in \N$ one has
\begin{align*}
\zeta_{\mathbb{P}}(n,s) = \sum_{\substack{k_1+2k_2+\cdots+nk_n = n \\ k_1, k_2, \cdots, k_n \ge 0}} \prod_{i=1}^n \frac{\zeta_{\mathcal{P}}(is)^{k_i}}{k_i! i^{k_i}},
\end{align*}
effectively making products of powers of $\zeta_{\mathcal{P}}(ks)$ the main component behind the arithmetic of the partitions associated with $\zeta_{\mathbb{P}}(n,s)$. Therefore if we choose $\zeta_{\mathcal{A},w}(s) = \zeta_{\mathbb{P}}(n,s)$, then the analysis of the partitions is essentially reduced to choosing $\zeta_{\mathcal{A},w}(s)=\prod_{k=1}^n \zeta_{\mathcal{P}}(ks)^j$, again with the choice $w(a)=1$ for all $a$.
%%%%%%%%%%%%%%%%%%%%%%%%%%%%%%%%%%%%%%%%%%%%%%%%%%%%%%%%%%%%%%%%%%%

\end{document}